\newcommand{\cc}{\mathfrak{c}}
\newcommand{\eps}{\varepsilon}
\newcommand{\ssup}[1] {{\scriptscriptstyle{({#1}})}}
\newcommand{\one}{{\mathsf 1}}
\newcommand{\R}{\mathbb R}
\newcommand{\E}{\mathbb E}
\renewcommand{\P}{\mathbb P}
\newcommand{\ut}{\mathfrak{t}}
\renewcommand{\phi}{\varphi}
\newcommand{\N}{\mathbb N}
\renewcommand{\P}{\mathbb P}
\newcommand{\vertiii}[1]{{\left\vert\kern-0.25ex\left\vert\kern-0.25ex\left\vert #1 
    \right\vert\kern-0.25ex\right\vert\kern-0.25ex\right\vert}}
\newcommand{\Y}{\mathcal Y}
\newcommand{\de}{\mathrm{d}} 
\newcommand{\F}{\mathfrak{F}}
\newcommand{\U}{\mathcal U}
\newcommand{\Rec}{\mathcal R}
\newcommand{\I}{\mathcal I}
\newcommand{\St}{\mathscr S}
\newcommand{\Co}{\mathscr C}
\newcommand{\Vo}{\mathscr V}
\newcommand{\heap}[2]  {\genfrac{}{}{0pt}{}{#1}{#2}}
\newcommand{\sfrac}[2] {\mbox{$\frac{#1}{#2}$}}
\newtheorem{theorem}{Theorem}
\newtheorem{definition}{Definition}
\newtheorem{lemma}{Lemma}
\newtheorem{proposition}{Proposition}
\newtheorem{remark}{Remark}
\begin{document}

\title[The contact process on dynamical scale-free networks]
{The contact process on dynamical scale-free networks} 

\author[Emmanuel Jacob, Amitai Linker and Peter M\"orters]{Emmanuel Jacob, Amitai Linker and Peter M\"orters}

\maketitle

\vspace{-0.7cm}

\vspace{0.1cm}

\begin{quote}
{\small {\bf Abstract:}} We investigate the contact process on four different types of scale-free inhomogeneous random graphs evolving according to a stationary dynamics, where each potential edge is updated with a rate depending on the strength of the adjacent vertices. Depending on the type of graph, the tail exponent of the degree distribution and the 
updating rate, we find parameter regimes of fast and slow extinction and in the latter case identify 
metastable exponents that undergo first order phase transitions. 
\end{quote}
\smallskip

\begin{quote}
{\small {\bf R\'esum\'e:}}
Nous \'etudions le processus de contact sur quatre types diff\'erents de graphes al\'eatoires inhomog\`{e}nes invariants d'\'echelle \'evoluant selon une dyna\-mique stationnaire, o\`{u} chaque ar\^{e}te potentielle est rafra\^ichie \`{a} un taux d\'ependant de la force des sommets adjacents. En fonction du type de graphe, de l'exposant de la queue de distribution des degr\'es et du taux de rafra\^ichissement, nous trouvons des r\'egimes d'extinction rapide ou lente et, dans ce dernier cas, nous identifions des exposants m\'eta\-stables qui subissent des transitions de  phase de premier ordre.
\end{quote}

\vspace{0.3cm}

\noindent\emph{MSc Classification:} Primary 05C82; Secondary 82C22.

\noindent\emph{Keywords:}  Phase transitions, metastable density, evolving network, temporal network, dynamic network, inhomogeneous random graph, preferential attachment network, network dynamics, SIS infection.

\pagebreak[3]

\section{Introduction}

Diffusion processes modelling the spread of information or disease are often sensitive to 
spatial inhomogeneities or temporal variation  in the surrounding medium. A paradigmatic example is the contact process, in which every vertex
of a finite graph can either be infected (occupied) or healthy (empty). The Markovian dynamics of this process
evolves in continuous time, every infected neighbour infects each of its healthy neighbours with rate $\lambda>0$ and recovers to the healthy state with rate one. Because recovered vertices are again susceptible to the infection this process is also known as the SIS infection in the epidemics literature. Note that the contact process has exactly one absorbing state, when every vertex is healthy, and that this state will be reached in finite time. This random time is called the \emph{extinction time} of the infection.
\medskip
\pagebreak[3]

On reasonably regular graphs we expect the contact process to show a phase transition in the infection rate~$\lambda$. There is $\lambda_c>0$ such that for $0<\lambda<\lambda_c$ the expected extinction time is bounded by the logarithm of the number of vertices of the graph. For $\lambda>\lambda_c$ however if there is an outbreak of the infection, with high probability the extinction time is exponential in the graph size. See \cite{DL88, DS88, MV16, CD18} and \cite[I.3]{L99} for results in this direction.\medskip

On scale-free graphs, which are highly inhomogeneous, the extinction time is exponential in the graph 
size for \emph{any $\lambda>0$}, see~\cite{BB+, GMT05, CD09}. This happens because the behaviour is dominated by a small number of vertices with extremely high degree. Indeed, if an infected vertex has degree $k\gg \lambda^{-2}$ then it typically has of order $\lambda k$ infected neighbours. Once it recovers, the probability that none of its neighbours reinfects the vertex within one time unit is roughly~\smash{$e^{-k \lambda^2}$}, which is very small. Hence the vertex can hold (and spread) the infection
for a long time, effectively exponential in $k\lambda^2$. Starting from sufficiently many vertices infected, the contact process therefore settles for a very long time in a state where a small number of vertices with very high degree and  a proportion of their direct neighbours remain infected for most of the time, a \emph{metastable state}, and only after a time exponential in the graph size the system collapses to the absorbing state. This metastable behaviour is characterised by a positive density of infected states at subexponential times, which for $\lambda\downarrow0$ decays like~$\lambda^{\xi + o(1)}$. The  exponent $\xi>0$ is called the \emph{metastable exponent}.
\medskip

The aim of this project is to investigate how  temporal variability in the surrounding medium can change the qualitative behaviour of diffusion processes in that medium using the contact process as an example. Similar problems have been studied recently for regular graphs, for example  in the work of da Silva et al.~\cite{Silva} and Hil\'ario et al.~\cite{Hilario}, but our focus is on scale-free and hence irregular graphs, which feature very different behaviour.  We interpolate between two extreme scenarios, on the one hand the infection on the static, and hence infinitely slowly evolving, scale-free network, on the other hand the mean-field model, which effectively corresponds to an infinitely fast network evolution.
In the mean-field model
infections pass between vertices with a rate given as  $\lambda$ times the {average time} that the edge connecting the vertices exists in the graph. This means, loosely speaking, that whenever the
infection wants to use a potential edge, the existence of this edge is freshly sampled using the stationary probability. The mechanism that on the scale-free graph kept the infection alive at high degree vertices does not work here, as these vertices do not have a fixed neighbourhood with an increased infection density. Hence the infection can only survive for small infection rates if the connectivity among 
high degree vertices is very high, which normally happens when the power-law exponent satisfies $\tau<3$, as was first observed by Pastor-Satorras and Vespignani~\cite{PV01}.
\medskip

To interpolate between the static and mean-field models we run the Markovian dynamics of an evolving graph and of the contact process simultaneously. A natural graph evolution is the updating of all potential edges. This simple evolution is also used in dynamical percolation models~\cite{Steif}, which motivates the name \emph{dynamical scale-free network} used in the title. If vertices in the graph are ranked and the existence of edges are independent events given the ranks, updating the edges with any rate and a fixed connection probability is a stationary dynamics. More precisely, we denote the vertex set by
$\{1,\ldots,N\}$ and initially, as well as at each updating, connect a pair $\{i,j\}$ of distinct vertices independently with a probability $p_{i,j}\wedge 1$.
The idea is that the index of a vertex indicates its rank, so that vertices with small index are strongest, i.e.\ have the largest expected degree.
Interesting choices of connection probability that lead to scale-free networks are, for given $\beta>0$ and $0<\gamma<1$,
\begin{itemize}
	\item the \emph{factor probability} given by
$p_{i,j}= \beta\, N^{2\gamma-1}  i^{-\gamma}j^{-\gamma},$\smallskip
\item the \emph{preferential attachment probability} given by 
$p_{i,j}= \beta\,  (i\wedge j)^{-\gamma} (i \vee j)^{\gamma-1},$\smallskip
\item the \emph{strong probability} given by 
$p_{i,j}= \beta\, N^{\gamma-1} (i\wedge j)^{-\gamma},$\smallskip
\item the \emph{weak probability} given by 
$p_{i,j}= \beta\, N^{\gamma} (i\vee j)^{-\gamma-1}.$\smallskip
\end{itemize}
\pagebreak[3]

While the parameter $\beta>0$ is quantitative and regulates the edge density, the parameter $0<\gamma<1$ significantly influences the quality of the networks by determining the power-law exponent. Indeed, in all cases it is easy to check that the expected degree of vertex~$i$ is of order $k_i=(N/i)^\gamma$ and this leads to the resulting networks being scale-free with power-law exponent $\tau=1+\frac1\gamma >2$. 
The choice of inhomogeneous random graphs is natural for our problem, as they are the invariant distributions under independent updating of edges and hence the updating dynamics is stationary for these random graphs. The connection probabilities correspond to those of the classical types of scale-free networks. The factor probability corresponds to the Chung-Lu or configuration models, the preferential attachment probability to the preferential attachment networks, and the strong, resp.~weak, probabilities represent graphs where only the stronger, resp.~weaker, vertex adjacent to a potential edge determines the probability of a connection. Observe that the first two probabilities agree iff $\gamma=\frac12$.\medskip

We choose the rate of updating of an edge in dependence of a further parameter $\eta\in\R$, so that varying $\eta$ will allow us to interpolate between the static and the mean-field case. 
For this purpose, as the time-scale of the contact process evolution is fixed at order one, one has to couple the time-scale of the network evolution to the network size $N$ in such a way that the update rates of the relevant edges do not degenerate as $N\to\infty$. We achieve this by letting the update rates 
depend on the strength of the vertices adjacent to the edge. More precisely, if $k_i$ is the expected degree of vertex $i$, the update rate of the pair $\{i,j\}$ is 
\smash{$\varkappa (k_i^\eta+k_j^\eta)$}, where $\varkappa>0$ is a fixed constant.  In Remark~\ref{up} below we discuss possible alternative update rates. In this framework increasing $\eta$ speeds up, decreasing $\eta$ slows down the network evolution. At $\eta=0$ the network evolution happens at the same time-scale as the contact process, when $\eta\downarrow-\infty$  we approach the static case, and when~$\eta\uparrow\infty$  the mean-field case.
\medskip

Our main result, Theorem~1, describes, for all four types of connection probabilities, in dependence of the power-law exponent $\tau>2$ and $\eta\in\R$,
\begin{itemize}
\item phases of  \emph{fast extinction}, i.e. for sufficiently small $\lambda>0$ the expected extinction time is bounded by a multiple of a power of $\log N$;
\item in phases of slow extinction the \emph{metastable exponent}. Each metastable exponent characterises an optimal survival strategy for the infection.
\end{itemize}
Theorem~1 is an application of Theorems~\ref{teolower}, \ref{theoslow}, \ref{generalstatic}, and \ref{teoupper_edge}, 
which describe upper and lower bounds for the density of infected sites under general conditions on the connection probabilities, which are sharp for the cases of principal interest but applicable in great generality.%
\medskip

The resulting phase diagrams, see Figure~1 below, show  that there are three relevant survival strategies for  the infection. In all those strategies a small set of strong vertices, the so-called \emph{stars}, is responsible for keeping the infection alive for a long time. The metastable densities agree with the density of infected neighbours of the stars in the graph. In the quick strategies the stars pass the infection between each other either directly, as in the \emph{quick direct strategy}, or via a single common neighbour, that acts as a stepping stone, as in the \emph{quick indirect strategy}. Such an indirect mechanism can in particular be effective for networks that favour edges if just one of the adjacent vertices is powerful.\pagebreak[3]
\medskip

 In the \emph{local survival strategy} each star can keep the infection alive for a certain time by means of immediate reinfection by its neighbours, so that the effective recovery time of a star is much larger than order 
 one (alike vertices with degree $\gg \lambda^2$ in a static network). The cost of enabling such a local survival mechanism is considerable (paid in terms of a small set of stars) and once it is paid there is no additional cost in spreading the infection between stars, either directly or indirectly.
The local survival strategy stops working when the network evolution is too fast, more precisely when $\eta\geq \frac12$, in which case mean-field behaviour kicks in.  In the intermediate range, when $0<\eta<\frac12$, the updating of edges has an adverse effect on the reservoir of infected neighbours, affecting the metastable exponents by reducing the efficiency of local survival, but for 
$\eta\leq 0$ this effect becomes negligible and static behaviour kicks in. Therefore we see a phase transition at $\eta=0$ within the regime of local survival. 
As $\eta\downarrow-\infty$ only quick direct spreading and local survival remain viable strategies, but the preferred strategy still depends on the type of the connection probability.%
\medskip%

Metastable densities have been calculated for \emph{static networks} for a case of factor connection probabilities by Mountford et al~\cite{MVY13} and for preferential attachment probabilities by Van Hao Can~\cite{VHC17}. In \cite{JLM19} we started our project of studying metastability for \emph{evolving} networks by looking at networks evolving by fast updating of all edges adjacent to a vertex \emph{simultaneously}. This leads to a completely different phase diagram compared to Figure~1 and, in particular, the local survival strategy is not present in the strong form described above. The fast and simultaneous updating of edges in the setup of~\cite{JLM19}  enables the use of methods relying on the fast mixing of large parts of the network. These methods are unavailable for significant parts of the proof of Theorem~1. 
In the present paper we therefore focus on these parts and develop new techniques that succeed without such fast mixing assumptions. We omit or only give brief hints for the  parts of the proof of Theorem~1 that follow by straightforward extension of the methods developed in \cite{JLM19}.
\medskip

In an ongoing project~\cite{JLM22} we look at networks with slow simultaneous updating
of all edges adjacent to a vertex. This will give an even richer picture of the effects of  the network evolution on  the contact process  and will lead to exciting new effects for slow network evolutions. It will also require even more sophisticated techniques which however rely in part on the methods developed in the present paper. 
In view of this, some proofs in the present paper are given in slightly greater generality than necessary. We explain our proof techniques and in particular the new techniques developed here  after the precise statement of our results at the end of Section~2.
\pagebreak[3]

\section{Main results}

For $N\in\N$, we consider the inhomogeneous random graph~$\mathscr G^{\ssup N}$, with vertex set~$\{1,\ldots,N\}$, which contains every edge $\{i,j\}$ independently with probability
$$p_{i,j}:=\sfrac{1}{N} \, p\big(\sfrac{i}{N},\sfrac{j}{N}\big) \wedge 1,$$
where the connection probabilities are given in terms of a kernel 
$p\colon (0,1]\times(0,1]\rightarrow (0,\infty)$, which is symmetric, continuous and decreasing in both parameters. We further assume that there is some $\gamma\in(0,1)$ and constants $0<c_1<c_2$ such that for all $a\in(0,1)$,
\begin{equation}\label{condp}
p(a,1) \le\int_0^1 p(a,s) ds<c_2 a^{-\gamma}.
\end{equation}
and 
\begin{equation}\label{condp2a}
c_1 a^{-\gamma} \le p(a,1).
\end{equation}
These properties are satisfied, for any~$\beta>0$, by the four kernels we consider, namely:
\begin{itemize}
	\item the \emph{factor kernel} $p(x,y)=\beta x^{-\gamma} y^{-\gamma}$,\smallskip
	\item the \emph{preferential attachment kernel} $p(x,y)=\beta (x \wedge y)^{-\gamma} (x \vee y)^{\gamma-1},$  \smallskip
	\item the \emph{strong  kernel} $p(x,y)=\beta (x \wedge y)^{-\gamma},$\smallskip
	\item the \emph{weak kernel} $p(x,y)=\beta (x \vee y)^{-\gamma-1}$,\smallskip
\end{itemize}
except the weak kernel which does not satisfy \eqref{condp2a} but only the weaker condition
\begin{equation}\label{condp2b}
c_1 a^{-\gamma} \le \int_0^1 p(a,s) ds.
\end{equation}

Observe that for all the kernels above, if we have $\lim i_N/N=x$ for a sequence $(i_N)$ and $x\in (0,1],$ then for $N$ sufficiently large we have 
$$p_{i_N,j}=\sfrac{1}{N} p(\sfrac{i_N}{N},\sfrac{j}{N}), \mbox{ for any~$j\in\{1,\ldots,N\}$.}$$
As a result the degree of the vertex $i_N$ in~$\mathscr G^{\ssup N}$ converges to a Poisson distribution with parameter \smash{$\int_0^1 p(x,y) dy$}, so its typical degree is of order $x^{-\gamma}$. More globally, the empirical degree distribution of the network converges (in probability) to a limiting degree distribution~$\mu$, which is a mixed Poisson distribution obtained by taking $x$ uniform in $(0,1)$, then a Poisson distribution with parameter \smash{$\int_0^1 p(x,y) dy$}. It is easy to check that \smash{$\mu(k)=k^{-\tau+ o(1)}$} for $k\to\infty$, with $\tau=1+1/\gamma$, and hence the network \smash{$(\mathscr G^{\ssup N})$} is scale-free with
power-law exponent~$\tau>2$.
\medskip
\pagebreak[3]

We now construct evolving networks~$(\mathscr G^{\ssup N}_t \colon t\geq 0)_{N\in\N}$ by updating every unordered pair  $\{i,j\}$  of distinct vertices independently with rate 
\[
\kappa_{i,j}\;=\;\kappa_i+\kappa_j=\varkappa\left(\frac{N}{i}\right)^{\gamma\eta}+\varkappa\left(\frac{N}{j}\right)^{\gamma\eta},
\]
where $\eta\in\R$ and $\varkappa>0$ are fixed. Upon updating, independently of the previous state, an edge between vertices $i$ and $j$
is inserted with probability $p_{i,j}$. Note that, for 
each~$N$, the graph valued process~$(\mathscr G^{\ssup N}_t \colon t\geq 0)$ is stationary with stationary distribution given by~$\mathscr G^{\ssup N}$.%
\medskip%

The Markovian evolution of the graph~$(\mathscr G^{\ssup N}_t \colon t\geq 0)$ and of the contact process
can be superimposed to define our model of the contact process on the evolving  network. When $\eta=0$
network evolution and contact process operate on the same time-scale, if $\eta>0$ the network evolution is faster, if $\eta<0$ it is slower. We start this process with the stationary distribution of the graph and all vertices infected. Just like in the
static case there is a finite, random extinction time~$T_{\rm ext}$ and we say that
\begin{itemize}
\item there is \emph{fast extinction}, if there exists $\lambda_c>0$ such that for all infection rates $0< \lambda< \lambda_c$ the expected extinction time is bounded by some power of $\log N$;\smallskip
\item there is \emph{slow extinction} if, for all $\lambda>0$, there exists $\eps>0$ such that
$T_{\rm ext}\geq  e^{\eps N}$ with high probability.\smallskip
\end{itemize}
Our first interest is in characterising phases of fast or slow extinction. Slow extinction is indicative of metastable behaviour of the process, and in this case our interest focuses on the exponent of decay of the metastable density when $\lambda\downarrow 0$. More precisely, just like in~\cite{JLM19}, we set $X_t(i)=1$ if vertex $i$ is infected at time $t$, and $X_t(i)=0$ otherwise and let
$$I_N(t)=\frac1N \, \E\Big[ \sum_{i=1}^N X_t(i)\Big] 
= \frac1N \sum_{i=1}^N \P_i \big( T_{\rm ext}>t\big),$$
where $\P_i$ refers to the process started with only vertex~$i$ infected and the last equality holds by the
self-duali ty of the process. The contact process is called
\emph{metastable} if there exists $\eps>0$ such that for all sequences  $(t_N)$ going to infinity slower than $e^{\eps N}$, we have
$$ \liminf_{N\to\infty} I_N(t_N)>0,$$
and if $(s_N)$ and $(t_N)$ are both going to infinity slower than $e^{\eps N}$, we have 
$$I_N(s_N)-I_N(t_N) \underset{N\to \infty} \longrightarrow 0.$$
In that case, the \emph{lower metastable density} $\rho^-(\lambda)=\liminf I_N(t_N)>0$ and the \emph{upper metastable density} $\rho^+(\lambda)  =\limsup I_N(t_N)$ are well-defined and we say that $\xi$ is the \emph{metastable exponent}  if
$$\xi= \lim_{\lambda\downarrow 0}\frac{\log \rho^-(\lambda)}{\log \lambda} = \lim_{\lambda\downarrow 0}\frac{\log \rho^+(\lambda)}{\log \lambda}.$$
We are now ready to state our main result.
\pagebreak[3]

\begin{theorem}
\label{teofinal}\ \\[-4mm]
\begin{itemize}
\item[(a)] Consider the \textbf{factor kernel}. 
\begin{itemize}
\item[(i)] If $\eta\ge \frac 12$ and $\tau>3$, there is fast extinction.
\item[(ii)]
If $\eta<\frac 12$ or $\tau<3$, there is slow extinction and metastability. Moreover, the metastability exponent satisfies
\begin{equation}
\label{dens1}
\xi \:=\;\left\{\begin{array}{ccl}\frac1 {3-\tau} &\mbox{ if }&\;
\left\{\begin{array}{rl}
&\eta\le 0 \mbox{ and }\tau\le\frac 5 2,\\
\mbox{ or }&0\le \eta\le \frac12\mbox{ and }\tau\le\frac 5 2+\eta, \\
\mbox{ or }&\eta\ge \frac 12\mbox{ and }\tau<3,
\end{array}\right.
\\[-2mm]
\\
2\tau - 3&\mbox{ if }&\;\eta\le 0 \mbox{ and }\tau\ge\frac 5 2,\\
[-2mm]\\
\frac{2\tau - 3-2\eta}{1-2\eta}&\mbox{ if }&\;0\le \eta< \frac12\mbox{ and }\tau>\frac 5 2+\eta.
\end{array}\right.
\end{equation}\end{itemize}
\medskip

\item[(b)]
Consider the \textbf{preferential attachment kernel} or \textbf{strong kernel}.
\begin{itemize}
\item[(i)] 
If $\eta\ge \frac12$ and $\tau>3$, there is fast extinction.
\item[(ii)]
If $\eta<\frac12$, or if $\eta\ge \frac12$ and $\tau<3$, there is slow extinction and metastability, and the metastability exponent satisfies
\begin{equation}
\label{dens2}
\xi\:=\;\left\{\begin{array}{ccl}
2\tau-3&\mbox{ if }&\eta\le 0,\\
[-2mm]\\
\frac{2\tau-3-2\eta}{1-2\eta}&\mbox{ if }&0<\eta< \frac 12 \mbox{ and }\tau\ge 2+2\eta,
\\
[-2mm]\\ \frac {\tau-1} {3-\tau} &\mbox{ if }& 
\tau<3  \mbox{ and }\eta>\frac \tau 2 - 1.
 \end{array}\right.
\end{equation}
\end{itemize}
\medskip

\item[(d)] 
Consider the \textbf{weak kernel}. There is slow extinction and metastability, and the metastability exponent satisfies
$\xi=\tau-1$.
\end{itemize}
\end{theorem}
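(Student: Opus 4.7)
The plan is to deduce Theorem~\ref{teofinal} by combining the four general results referenced in the introduction: Theorems~\ref{teolower} and \ref{theoslow} provide lower bounds on the metastable density (hence upper bounds on the metastable exponent $\xi$) by constructing explicit survival strategies, while Theorems~\ref{generalstatic} and \ref{teoupper_edge} provide matching upper bounds on the density. The bulk of the proof is then a case-by-case variational computation: for each of the four kernels, substitute the explicit form of $p(x,y)$ into the general conditions, identify the dominant survival strategy among the three candidates (quick direct, quick indirect, and local survival), and optimise over the strength parameter of the stars.

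For the fast-extinction regime in parts (a)(i) and (b)(i), I would apply Theorem~\ref{teoupper_edge}. When $\eta \geq \tfrac12$ the updating rate of an edge incident to a vertex of expected degree $k$ is at least of order $k^{\eta}$, which destroys the static reservoir of infected neighbours on a time scale too short to sustain reinfection; the local-survival mechanism of the infection is then crippled, and for $\tau>3$ the remaining quick strategies cannot keep the infection alive for longer than a polylogarithmic time. In the slow-extinction regime the density lower bounds follow from Theorems~\ref{teolower} and \ref{theoslow} applied with a star strength parameter $\alpha\in(0,1)$, so that stars have expected degree of order $N^{\alpha}$. Each of the three survival strategies yields an $\alpha$-dependent lower bound on $\rho^{\pm}(\lambda)$ of the form $\lambda^{F(\alpha)+o(1)}$, and the best strategy for given $\tau$ and $\eta$ is obtained by minimising $F(\alpha)$ over the viable range of $\alpha$ (determined by the cost of maintaining a star and by connectivity constraints between stars). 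Substituting the four kernels and carrying out the optimisation produces the piecewise formulas in \eqref{dens1} and \eqref{dens2}.

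The underlying mechanism for each regime can be read off from the structure of these optima. For the factor kernel, quick indirect spreading via stepping-stone neighbours dominates for small $\tau$, giving $\xi=1/(3-\tau)$; for large $\tau$ and $\eta\leq 0$, local survival powered by the essentially static reservoir of neighbours dominates with $\xi=2\tau-3$; and for $0<\eta<\tfrac12$ the dynamic updating degrades the reservoir, yielding the interpolating exponent $\frac{2\tau-3-2\eta}{1-2\eta}$ and exhibiting the phase transition at $\eta=0$ highlighted in the introduction. For the preferential attachment and strong kernels the stronger connectivity between high-degree vertices allows the quick direct strategy to compete, producing the regime $\xi=\frac{\tau-1}{3-\tau}$ in addition to the analogues of the factor-kernel exponents. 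For the weak kernel the absence of a direct connection between strong vertices forces every effective strategy to route through typical vertices, and the optimisation collapses to $\xi=\tau-1$ for every $\eta$.

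The matching upper bounds on $\rho^{\pm}$ come from Theorem~\ref{generalstatic} in regimes where the evolving graph can be compared with a static inhomogeneous random graph, and from Theorem~\ref{teoupper_edge} otherwise. The main obstacle I anticipate is the intermediate range $0<\eta<\tfrac12$, where neither the static nor the mean-field comparison is tight: here one must rule out any survival mechanism not on the short list of three, which requires a careful exploration of infection paths in the dynamic graph showing that every long-surviving infection must concentrate around a star whose local structure is exactly the one solving the variational problem. A secondary technical point is the weak kernel, where condition~\eqref{condp2a} fails and only~\eqref{condp2b} is available, so that the high-degree vertices cannot be treated as reliable stepping stones; the upper bound then has to be obtained by a separate argument replacing strong stepping stones with typical intermediate vertices, which explains why the weak kernel produces a single exponent independent of $\eta$.
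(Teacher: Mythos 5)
Your overall architecture coincides with the paper's: Theorem~\ref{teofinal} is deduced by feeding the explicit kernels into the lower-bound Theorems~\ref{teolower} and~\ref{theoslow} and the upper-bound Theorems~\ref{generalstatic} (for $\eta\le 0$) and~\ref{teoupper_edge} (for $\eta\ge 0$, with $a=0$ giving fast extinction). But two points of your execution plan are genuinely wrong, and they are precisely where the content of the deduction lies. First, the stars must be parametrised by a level $a=a(\lambda)\in(0,\frac12)$ that does \emph{not} depend on $N$, so that their expected degree is a power of $1/\lambda$ (for instance $a=r\lambda^{1/(2\gamma-1)}$, $a=r\lambda^{2/(2\gamma-1)}$, or $a=r\lambda^{2/(\gamma(1-2\eta))}$ up to logarithmic corrections); your choice ``stars of expected degree $N^{\alpha}$'' is incompatible with Theorems~\ref{teolower} and~\ref{theoslow} as stated, would make the set of star-adjacent vertices of size $o(N)$, and hence could not yield a positive, $N$-independent metastable density of the form $\lambda^{F+o(1)}$: the optimisation is over the $\lambda$-exponent of $a$, not over a polynomial scale in $N$. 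Second, your assignment of strategies to regimes contradicts what the verification actually gives: for the factor kernel the exponent $\frac1{3-\tau}$ comes from \emph{quick direct} spreading (condition (i) with $a=r\lambda^{1/(2\gamma-1)}$), not indirect; for the preferential attachment and strong kernels condition (i) can never be activated, since $\lambda a\,p(a,a)=\beta\lambda$ is small, and the regime $\xi=\frac{\tau-1}{3-\tau}$ is produced by \emph{quick indirect} spreading (condition (ii) with $a=r\lambda^{2/(2\gamma-1)}$); and for the weak kernel quick direct spreading prevails for all $\eta$ (with $a=r\lambda^{1/\gamma}$), contrary to your claim that strong vertices lack direct connections --- under the weak kernel two stars connect with probability $\beta a^{-\gamma-1}/N$, which is large, while it is the star-to-typical-vertex connections that are weak. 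Since the proposal's substance is exactly these substitutions and optimisations, as written it would not produce the piecewise formulas \eqref{dens1}--\eqref{dens2}.

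Two further remarks. The upper bound in the intermediate range $0<\eta<\frac12$ does not require ``ruling out all survival mechanisms not on the short list'' by exploring infection paths: Theorem~\ref{teoupper_edge} handles it directly once one exhibits $a(\lambda)\asymp\lambda^{2/(\gamma(1-2\eta))}$ and a scoring function (for instance $s(x)=x^{-\mathfrak{c}}$ with $\mathfrak{c}=1-\gamma/2-\gamma\eta$) satisfying \eqref{cond1} and \eqref{cond2}; these concrete choices, together with the choices of $a$, $R$ and the integral estimates $\mathscr F_1,\mathscr F_2$ in Theorem~\ref{generalstatic} for $\eta\le0$, are the whole content of the upper-bound verification and are absent from your plan. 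Finally, the metastability assertion of Theorem~\ref{teofinal} in the quick-spreading phases is not contained in the statement of Theorem~\ref{teolower}; it requires the separate duality argument of Section~\ref{sec-metastab}, which your proposal does not address.
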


\begin{figure}[h!]
    \centering
    {{\includegraphics[width=7.2cm]{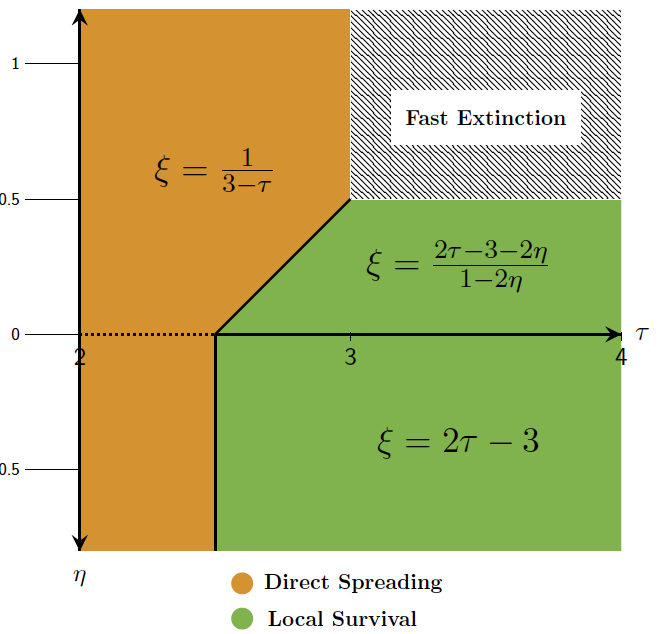} }}%
    \quad
    {{\includegraphics[width=7.4cm]{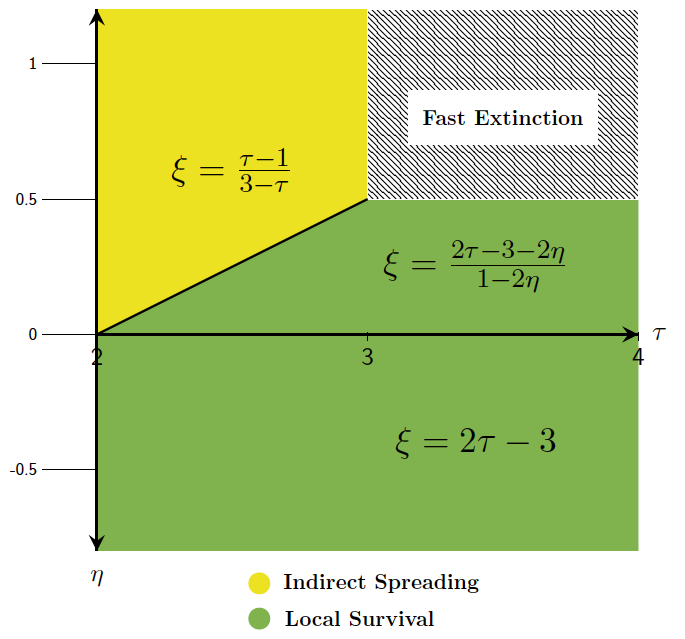} }}
\caption{Phase diagram summarising Theorem~\ref{teofinal}  for the factor kernel (left) and for the strong and preferential attachment kernel (right). Bold lines indicate first-order phase transitions. Note the phase transition at $\eta=0$ within the local survival spreading regime. For the weak kernel  quick direct spreading (brown phase) prevails for all parameters.\vspace{-1mm}}
\label{one}
\end{figure}

\begin{remark}
The different exponents in Theorem~\ref{teofinal} are actually indicative of different survival strategies for the infection, as indicated in Figure 1.%
\end{remark}
\ \\[-10mm]

\begin{remark}\label{up}
Theorem~\ref{teofinal} is robust under changes of the update rates. It essentially only requires that the update rate of $\{i,j\}$ 
 depends in the case $\eta>0$ on the more powerful of the adjacent vertices. In particular, if $k_i$ denotes the expected degree of vertex $i$, our results hold verbatim for the update rates \smash{$\varkappa (k_i^\eta \vee k_j^\eta)$} or \smash{$\varkappa (k_{i\wedge j}^\eta)$}.
\end{remark}%
\ \\[-11mm]

\begin{remark}
Theorem~\ref{teofinal} shows that our models interpolate between the static case and the mean-field case when taking $\eta$ to $-\infty$ or $+\infty$, respectively. We observe the same exponents as in the static case as soon as $\eta\le 0$ and as in the mean-field case as soon as $\eta\geq\frac12$. 
\end{remark}

We now comment on  the proofs, which split into lower and upper bounds.\pagebreak[3]
\medskip

\emph{Lower bounds} are required in the slow extinction case only. To verify them one needs to show that the conjectured strategies succeed in the given regimes. Even though the strategies themselves are in some cases similar to those in~\cite{JLM19}
this is much harder here, because we have to handle the dependencies arising from the relatively slow mixing of the network. A new argument is also needed to prove metastability, which is independent of the individual strategies and based on self-duality of the process, see Section~\ref{sec-metastab}.
\pagebreak[3]\medskip

In the case of \emph{quick direct spreading}, which relies on stars directly infecting each other, we 
define  a {connectivity condition}, that the network is likely to keep satisfying for an exponentially long time, see Proposition~\ref{catorbound}. The condition ensures that there is always a  lower bound on the number of healthy stars neighbouring infected stars.  On this condition we argue, similarly as Cator and Don in \cite{CD18}, that the number of infected stars can be bounded from below for an exponentially long time by a random walk with upward drift subject to an absorbing lower and reflecting upper bound. Roughly speaking, the extinction time can then be bounded from below by the absorption time, which is exponential in~$N$, and the lower bound on the metastable density follows (as in all other phases)  by considering the number of infected neighbours of the set of infected stars. We have marked this phase in brown in the phase diagram, see~Figure~1.\smallskip

For  \emph{quick indirect spreading} the method above has to be refined. We now introduce a discrete time scale chosen so that vertices are sufficiently likely to be stable, in the sense that they neither update nor recover during one epoch. We then ensure that at every time step the set of infected stable stars is connected to sufficiently many healthy stable stars via a path of length two, using a stable intermediate vertex, to retain the infection for an exponential amount of time, see Proposition~\ref{catorbound2}. This phase is marked in yellow in Figure~1.
\medskip

The  \emph{local survival} strategy uses an effective  time-scale  which 
represents the time until a powerful vertex has a sustained recovery, i.e.\ a recovery that is not outdone by immediate reinfection by the neighbours of the vertex. Proposition~\ref{survivaledge} is the key result that establishes the existence of this  time-scale. Heuristically, at the time of its recovery the neighbours of a powerful vertex are infected with probability $\frac{\lambda}{\lambda+1+\kappa}$, where $\kappa$ denotes the update rate, i.e. when the last event affecting the neighbour or the connecting edge was an infection. With the same probability this neighbour immediately reinfects the recovered powerful vertex. If $\eta\leq0$ this probability is of order $\lambda$ and the infection can survive if the set of stars is chosen so that their expected degree is of larger order than  $\lambda^{-2}$.
If $\eta>0$ the probability depends via the update rate $\kappa\approx k^{-\eta}$ on the degree $k$ of the powerful vertex so that for $\eta\geq\frac12$ the local survival strategy collapses. For $0<\eta<\frac12$ a local survival time exponential in $\lambda^2$ and $k^{1-2\eta}$ is possible. The effect of the large edge updating probability in this case reduces the efficiency of the strategy and leads to nondifferentiability of the corresponding metastability exponent at $\eta=0$.
In Propositions~\ref{prop:skeletonstable} and~\ref{keyspreading} we show that local survival at the powerful nodes for a time which is stretched exponential in the vertex degree insures slow extinction of the contact process. Note that this is different from the behaviour in Equation~(3) or Lemma~7 of~\cite{JLM19} where local survival at powerful nodes holds for a time polynomial in $k$ and slow extinction can only be established by combining with a suitable spreading strategy operating in the effective time-scale.
The two phases corresponding to the local survival strategy  are marked dark green in Figure~1. 
\medskip

\emph{Upper bounds} in \cite{JLM19} were proved in two steps, \emph{first} the full model was coupled to a simpler model, the wait-and-see model, which has more infected vertices and a simplified Markovian transition that only controls the presence of relevant edges. For this stochastic upper bound we were able, in the \emph{second} step, to associate a score to each configuration which eventually led to the construction of a supermartingale, which gave the required upper bounds by application of the optional stopping theorem. This argument, designed for simultaneous vertex updating with $\eta\geq 0$, can be extended to deal with edge updating schemes, see Theorem~\ref{teoupper_edge}. However, no extension is possible for  edge updating with $\eta<0$. 
\medskip

\pagebreak[3]

The new approach needed to deal with those cases is inspired by the methods of \cite{MVY13} for static networks and based on the self-duality of the process. Using self-duality, we can bound the upper metastable density by the probability that the process, starting from one infected vertex $x$, survives up to some large given time $t$. Until time $t$, the contact process then stays in the ``local dynamical neighbourhood'' of~$x$ in the graph, which is with high probability tree-like.
A well-known result~\cite{P92} is that the contact process on a \emph{static} tree with degrees bounded by $\frac1{8\lambda^2}$ is locally subcritical: it may survive globally but not locally.
We obtain a similar result for \emph{dynamical} graphs that can be applied to the local dynamical neighbourhood of~$x$. 
For precise statements, see Lemma~\ref{evolving_to_static} below and Inequalities~\eqref{boundA4} and~\eqref{boundA5}. Informally, the time evolution increases the number of neighbours 
an infected vertex can infect, but our study reveals that this cannot have a stronger effect than multiplying the edge connection probability by $1+4\varkappa$. We thus obtain upper bounds for the metastable density valid for all $\eta\le 0$, which lead to the same metastable exponent as in the static case.
\medskip 

The rest of this paper is structured as follows. We prove the lower bounds in Section~3, and the upper bounds in Section~4. Those parts of the argument, which are mere extensions of arguments  for the fast extinction case explained in~\cite{JLM19} are omitted in the main text and briefly sketched in the appendix, Section~5. 
\medskip

\pagebreak[3]

\section{Lower bounds}

\subsection{Graphical representation and lower bounds framework}

\label{sec:graphical}

For each $N\in \N$, the evolving network model $({\mathscr G}_t \colon t\geq 0)=({\mathscr G}^{\ssup N}_t \colon t\geq 0)$ is represented 
with the help of the following independent random processes;
\begin{enumerate}
	\item[(1)] For each $x,y\in\N$ with $x\not=y$  a Poisson point process \smash{$\mathcal U^{x,y}=(U^{x,y}_n)_{n\ge 1}$} 
	of intensity $\kappa_{x,y}$, describing the updating times 
	of the potential edge $\{x,y\}$. \smallskip
	\item[(2)] For each $\{x,y\}$ with $x\not=y$ and $x,y\leq N$, a sequence of independent random variables $(C^{x,y}_n)_{n\ge0}$ (which we denote ${\mathcal C}^{x,y}$), all Bernoulli with parameter $p_{x,y}$, 
	describing the presence/absence of the edge in the network after the successive updating times of the potential edge $\{x,y\}$. More precisely, if $t\ge 0$ then 
	$\{x,y\}$ is an edge in~\smash{${\mathscr G}_t$} if and only if  $C^{x,y}_n=1$ for $n= \vert[0,t]\cap \U^{x,y}\vert$.\smallskip
\end{enumerate}
Given the network we represent the infection by means of the following set of
independent random variables;
\begin{enumerate}
	\item[(3)] For each $x\in \N$, a Poisson point process $\mathcal R^x=(R^x_n)_{n\ge 1}$ of intensity one describing the recovery times of~$x$.\smallskip
	\item[(4)] For each $\{x,y\}$ with $x\not=y$, a Poisson point process $\mathcal{I}_0^{x,y}$ with intensity $\lambda$ describing the infection  times along the edge $\{x,y\}$. Only the trace $\mathcal I^{x,y}$ of this process on the~set
	$$
	\big\{t\in [0,\infty) \colon \{x,y\} \textrm{ is an edge of }\mathscr G_t\big\}=\bigcup_{n=0}^\infty \{[U^{x,y}_n, U^{x,y}_{n+1}) \colon  C^{x,y}_n=1\}
	$$
	can actually cause infections. Write $(I^{x,y}_n)_{n\ge1}$ for the ordered points of $\mathcal I^{x,y}$.
	If just before time $I^{x,y}_n$ vertex $x$ is infected and $y$ is healthy, then $x$ infects $y$ at time  $I^{x,y}_n$. If $y$ is infected and $x$
	healthy, then $y$ infects $x$.  Otherwise, nothing happens.\smallskip
\end{enumerate}
The infection is now described by a process $(X_t(x)\colon  x\in \{1,\ldots,N\},  t\geq 0)$ with values 
in~\smash{$\{0,1\}^N$}, such that $X_t(x)=1$ if $x$ is 
infected at time~$t$, and $X_t(x)=0$ if $x$ is healthy at time $t$.  
More formally, the infection process associated to this 
graphical representation and to a starting set $A_0$ of infected vertices, is the c\`adl\`ag process with $X_0(x)=\one_{A_0}(x)$ 
evolving only at times $t\in \mathcal R^x\cup \bigcup_{n=1}^\infty I_n^{x,y}$, according to the following rules:
\begin{itemize}
	\item If $t\in \mathcal R^x$, then $X_t(x)=0$ (whatever $X_{t-}(x)$). 
	\item If $t  \in \mathcal I^{x,y}$, then 
	$$
	(X_t(x),X_t(y))=
	\left\{
	\begin{array}{rl}
	(0,0) & \mbox{ if } (X_{t-}(x),X_{t-}(y))=(0,0). \\
	(1,1) & \mbox{ otherwise.} \\
	\end{array}
	\right.
	$$
\end{itemize}
The process $(({\mathscr G}_t, X_t) \colon t\geq 0)$ is a Markov process describing the simultaneous evolution of the network and of the infection. We 
denote by $(\F_t\colon t\ge 0)$ its canonical filtration.
Using the graphical representation we obtain monotonicity and duality properties of the contact
process on the evolving graph as in Proposition~1 of~\cite{JLM19}.
We detail briefly the construction of the dual process on a time interval $[0,t_0]$, which we use in Section~\ref{sec-metastab} to show metastability. The network dynamics being stationary and reversible, the reversed-time network 
$(\check {\mathscr G}_t^{t_0})_{0\le t \le t_0}:=
(\mathscr G_{t_0-t})_{0\le t \le t_0}$ has the same law as 
$(\mathscr G_t)_{0\le t \le t_0}$. 
We then construct the dual contact process 
$(\check X_t^{t_0})_{0\le t \le t_0}$ on 
$(\check {\mathscr G}_t^{t_0})_{0\le t \le t_0}$, by using the recovery times $\check {\mathcal R}^x$ and the infection times $\check {\mathcal I}^{x,y}$ given by
\begin{align*}
	\check {\mathcal R}^x&= \{t_0-t \colon t\in \mathcal R^x\}, \\
	\check {\mathcal I}^{x,y}&=\left\{t_0-t \colon  t\in \mathcal I^{x,y}\right\}
	=\left\{t_0-t \colon t\in \mathcal I_0^{x,y},  \{x,y\} \textrm{ is an edge of } \check {\mathscr G}_{t_0-t}^{t_0}\right\}.
\end{align*}
The dual processes $(\check X^{t_0}_t, \check{\mathscr G}^{t_0}_t)_{0\le t \le t_0}$ then have the same law as the original processes $(X_t,\mathscr G_t)_{0\le t\le t_0}$, 
and additionally, if we fix a starting set of infected vertices $A$ for the original process and $B$ for the dual process, then the event $\{\exists x\in B, X_{t_0}(x)=1\}$ coincides with the event $\{\exists x\in A, \check X^{t_0}_{t_0}(x)=1\}$.

\bigskip

Fix a parameter $a=a(\lambda)\in (0,1/2)$; throughout this section we will work on a subgraph $({\widetilde{\mathscr G}}_t \colon t\geq 0)$ with vertex set ${\!\St}\cup{\Co}^0\cup{\Co}^1\cup\Vo^{odd}$  where 
\begin{align*}
{\!\St}&:=\,\{x\in\{\lceil\sfrac{aN}{2}\rceil+1,\ldots, \lceil aN\rceil\}\colon\,x\text{ is even }\},\\[2pt]{\Co^0}&:=\,\{y\in\{\lceil \sfrac{N}{2}\rceil+1,\ldots, N\}\colon\,y=4k\text{ for some }k\in\N \},\\[2pt]{\Co^1}&:=\,\{y\in\{\{\lceil \sfrac{N}{2}\rceil+1,\ldots, N\}\colon\,y=4k+2\text{ for some }k\in\N \},
\\[2pt]{\Vo^{odd}}&:=\,\{z\in\{1,\ldots, N\}\colon\,z\text{ is odd }\}.
\end{align*}%
The vertices in $\St$ correspond to \emph{stars}, which are the key ingredients in the survival strategies. The vertices in $\Co=\Co^0\cup\Co^1$ correspond to \emph{connectors}, which are partitioned into $\Co^0$ and $\Co^1$ depending on how we use them. Connectors in $\Co^0$ will be used by stars to survive locally, while connectors in $\Co^1$ will be used by the infection to spread. Finally, vertices in $\Vo^{odd}$ will be used to provide lower bounds for the metastable density. In order to simplify computations the connection probabilities $p_{i,j}$ between vertices in ${\!\St}\cup{\Co}$ are replaced by the lower bounds
\[
\widetilde{p}_{i,j}\;=\left\{\begin{array}{cl}p_{\lfloor aN\rfloor,\lfloor aN\rfloor}&\text{ if }i,j\in{^a\!\St}\\p_{\lfloor aN\rfloor,N}&\text{ if }i\in{^a\!\St},j\in{^a\Co}\\0&\text{ if }i,j\in{^a\Co}.\end{array}\right.
\]
The idea behind simplifying the connection probabilities is to work on a model that cares only about vertex quantity and not identity. Even though it still remains that the updating  rates~$\kappa$ are different for each edge, this idea becomes heuristically correct since $\kappa_x=\Theta(a^{-\gamma\eta})$ uniformly over all stars~$x$, and $\kappa_x=\Theta(1)$ uniformly over all connectors~$x$. Observe that we can construct both $\widetilde{{\mathscr G}}_t$ and  ${\mathscr G}_t$ so that  $\widetilde{{\mathscr G}}_t\subseteq{\mathscr G}_t$ and hence the original process dominates the one running on the subgraph.\medskip

Our first auxilliary result extends Lemma 1 in \cite{JLM19}, giving lower bounds for the expected density whenever there is survival. Define  
\smash{$(\mathfrak{F}^{_{0,1}}_{t})$} as the filtration given by all the $\Rec^{x}$, $\I_0^{x,y}$, and $\U^{x,y}$ up to time $t$ where $x,y\notin\Vo^{odd}$, as well as all the connections between such vertices up to $t$. In words, \smash{$(\mathfrak{F}_t^{_{0,1}})$} is the natural filtration of the process running on the network that does not consider vertices in $\Vo^{odd}$.

\begin{lemma}\label{lemmalower}
	For any $r>0$ and $t>0$  there is $C>0$ (independent of $\lambda$, $a$, $N$) such that
	\begin{equation}
	\label{lemma1r}
	\E\Big[\sum_{z\in\Vo^{odd}} X_{t+1}(z) \,\Big|\,\mathfrak{F}_t^{0,1}\, \Big] \;\geq\; C \,\Big(\int_0^1(\lambda a p(a,x)\wedge1)dx\Big) N,
	\end{equation}
        on any event $A\in \mathfrak{F}_t^{0,1}$ implying $\big|\{x\in \St \colon X_t(x)=1\}\big|\,\geq\,raN$. 
        \end{lemma}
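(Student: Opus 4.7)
The plan is to give, for each $z\in\Vo^{odd}$, a lower bound on the conditional probability $\P(X_{t+1}(z)=1\mid\mathfrak F^{0,1}_t)$ valid on the event $A$, and then to sum over~$z$. Infection of~$z$ during $(t,t+1]$ will be provided independently by the many infected stars at time~$t$; the argument exploits crucially that all edge data and the recovery clock of any $z\in\Vo^{odd}$ are independent of $\mathfrak F^{0,1}_t$.

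For each star $x\in\St$ with $X_t(x)=1$ and each fixed $z\in\Vo^{odd}$, I introduce the event
\[
B_x\,=\,\big\{\mathcal R^x\cap(t,t+1/2]=\emptyset\big\}\,\cap\,\big\{\mathcal I^{x,z}\cap(t,t+1/2]\neq\emptyset\big\}.
\]
On $B_x$, vertex~$x$ remains infected throughout $(t,t+1/2]$, so the first arrival on $\mathcal I^{x,z}$ in that window actually infects~$z$. Intersecting with the independent event $\{\mathcal R^z\cap(t,t+1]=\emptyset\}$ of probability $e^{-1}$ (which forces $z$ to stay infected until $t+1$), and noting that the randomness involved is independent of $\mathfrak F^{0,1}_t$ (the filtration contains neither the post-$t$ increments of $\mathcal R^x$, nor any graphical data involving vertices in $\Vo^{odd}$), one obtains
\[
\P\big(X_{t+1}(z)=1\,\big|\,\mathfrak F^{0,1}_t\big)\,\ge\, e^{-1}\,\P\Big(\bigcup_{x\in\St\colon X_t(x)=1}B_x\,\Big|\,\mathfrak F^{0,1}_t\Big).
\]

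The crucial step is that, given $\mathfrak F^{0,1}_t$, the events $(B_x)_x$ are mutually independent across stars, because $B_x$ depends only on $\mathcal R^x\cap(t,\infty)$ together with $\mathcal U^{x,z},\mathcal C^{x,z},\mathcal I_0^{x,z}$, and these families are independent across different~$x$. I then estimate $\P(B_x\mid\mathfrak F^{0,1}_t)\ge e^{-1/2}(1-e^{-\lambda/2})\,p_{x,z}$ by conditioning on the first atom of $\mathcal I_0^{x,z}$ after~$t$ and using stationarity of the edge dynamics, which ensures that the edge is present at this atom with probability $p_{x,z}$. Monotonicity of~$p$ gives $p_{x,z}\ge(p(a,z/N)\wedge N)/N$ uniformly in $x\in\St$. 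Summing over the $\ge raN$ infected stars and applying the elementary inequality $1-\prod_x(1-q_x)\ge(1-e^{-1})\min(\sum_x q_x,1)$ yields, for $\lambda$ bounded and~$N$ large enough that $\lambda aN\ge 1$,
\[
\P\Big(\bigcup_x B_x\,\Big|\,\mathfrak F^{0,1}_t\Big)\,\ge\, c_1\,\big(\lambda a\,p(a,z/N)\wedge 1\big),
\]
with $c_1>0$ depending only on~$r$.

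Finally, summing this pointwise lower bound over $z\in\Vo^{odd}$, which consists of $\sim N/2$ roughly evenly spaced indices in $\{1,\ldots,N\}$, and interpreting the result as a Riemann sum for the continuous monotone integrand $x\mapsto\lambda a\,p(a,x)\wedge 1$, delivers the announced bound with $C>0$ independent of $\lambda,a,N$. The main care needed is bookkeeping of the independence properties of the $B_x$ (verifying both their independence from $\mathfrak F^{0,1}_t$ and their mutual independence across~$x$); a secondary subtlety is the handling of the cap $\wedge 1$ in $p_{x,z}$, which is absorbed uniformly thanks to the large-$N$ condition $\lambda aN\to\infty$ and the estimate $1-e^{-\lambda/2}\asymp\lambda\wedge 1$.
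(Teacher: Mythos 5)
Your proof is correct and follows essentially the same route as the paper: non-recovering infected stars give independent infection attempts on a given $z\in\Vo^{odd}$ within one time unit, the edge is present at the first attempt with probability $p_{x,z}$ by stationarity, $z$ itself must not recover, and the resulting per-$z$ bound $\gtrsim \lambda a\,p(a,z/N)\wedge 1$ is summed over $\Vo^{odd}$ as a Riemann sum. The only cosmetic difference is that you fold each star's non-recovery into the per-star event $B_x$ and invoke independence across stars directly, whereas the paper first extracts the set $\St'$ of non-recovering stars via a concentration bound and then conditions on $|\St'|\ge raN/4$; both yield the same estimate of the form $e^{-1}\bigl(1-\prod_x(1-c\lambda p_{x,z})\bigr)$.
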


\begin{proof}
	The proof of the result is a straightforward adaptation of the argument given in \cite{JLM19}. Fix a realization of the variables 
	generating~$\mathfrak{F}_t^{_{0,1}}$, such that $A$ holds and define $\St_0=\{x\in \St \colon X_t(x)=1\}$ so that $|\St_0|\geq raN$. Let 
	\[\St':=\{x\in\St_0 \colon \mathcal{R}^x\cap[t,t+1]=\emptyset\}\]
	and observe that each $x\in \St_0$ belongs to $\St'$ independently with probability $e^{-1}$ and therefore the cardinality $S'$ of $\St'$ dominates a binomial random variable with parameters~$raN$ and $e^{-1}$, so that with probability $1-e^{-cN}$ we have \smash{$|\St'|\geq\frac{raN}{4}$}. The event above depends only on \smash{$\mathfrak{F}_{t+1}^{0,1}$}. For any $z\in \Vo^{odd}$ we say that $z$ \emph{succeeds} if, 
	\begin{itemize}
		\item $\Rec^{z}\cap[t,t+1]=\emptyset$, and
		\item there is $x\in\St'$ such that
		\begin{itemize}
		\item $\mathcal{I}_0^{x,z}\cap[t,t+1]\neq\emptyset$, and
		\item at the first infection in $\mathcal{I}_0^{x,z}\cap[t,t+1]$, the edge $\{x,z\}$ belongs to the graph.
		\end{itemize}
	\end{itemize}
	Observe that the events $\{z\text{ succeeds}\}_{z\in\Vo^{odd}}$ are independent of \smash{$\mathfrak{F}_{t+1}^{0,1}$}. Using stationarity of the network we deduce that for any fixed $z\in\Vo^{odd}$, conditional on $|\St'|\geq\frac{raN}{4}$ we have 
	\begin{align*}\P(z\text{ succeeds})&\geq e^{-1}\big(1-\prod_{x\in\St'}\big(1- (1-e^{-\lambda}) p_{x,z}\big)\big)\\[2pt]&\ge\, e^{-1}\big(1-\big(1- (1-e^{-\lambda}) p_{{\lceil aN\rceil},z}\big)^{|\St'|}\big)\\[2pt]&\ge\, e^{-1}\big(1- \exp\big(-  \sfrac {\lambda p_{\lceil aN\rceil,z} |\St'|}{2}\big)\big) \,\ge\, \sfrac {\lambda raN p_{\lceil aN\rceil,z}}{64} \wedge \sfrac 18.\end{align*}
	Since $X_{t+1}(z)=1$ for any $z\in \Vo^{odd}$ that succeeds we finally deduce
	\begin{align*}\E\Big[\sum_{z\in\Vo^{odd}} X_{t+1}(z) \,\Big|\,\mathfrak{F}_t^{0,1}\, \Big] &\geq\; \sum_{z\in\Vo^{odd}}\sfrac {\lambda raN p_{\lceil aN\rceil,z}}{64} \wedge \sfrac 18\\[2pt]&\geq\; C N\bigg(\frac{1}{N}\sum_{z\in\Vo^{odd}} \big(\lambda a p(a,\sfrac{z}{N})\wedge 1\big)\bigg), \end{align*}
	for fixed $C>0$, where we used that $p_{\lceil aN\rceil,z}=\sfrac{1}{N}p(\sfrac{\lceil aN\rceil}{N},\sfrac{z}{N})\geq\frac{1}{2N}p(a,\frac{z}{N})$. Approximating the term within parenthesis by a Riemann integral, we obtain the desired result.
\end{proof}
\medskip

In order to ease the notation, for the remainder of this section we assume that both $aN$ and $N$ are integers divisible by $8$, and that $N$ is very large. Under this assumptions we have $|\St|=\frac{aN}{4}$, $|\Co^0|=|\Co^1|=\frac{N}{8}$, and  $p_{\lfloor aN\rfloor, \lfloor aN\rfloor}=\frac{1}{N}p(a,a)$ as well as $p_{\lfloor aN\rfloor,N}=\frac{1}{N}p(a,1)$. As we restrict ourselves to the study of the contact process on~\smash{$\widetilde{{\mathscr G}}_t$} with connection probabilities~$\widetilde{p}_{i,j}$ we will abuse notation and drop the tildes.

\subsection{Quick Strategies}\label{quicks}

We  now identify conditions on the kernel~$p$ for the two quick survival strategies to succeed. 
\medskip

\begin{theorem}\label{teolower} 
	There exist positive $M_{(i)}$ and $M_{(ii)}$ depending on $\varkappa$, such that slow extinction holds for the contact process on the network if, for all $\lambda \in (0,1)$,
	there is $a=a(\lambda) \in (0,1/2)$ satisfying at least one of the following conditions:
	\medskip
	
	\begin{itemize}
		\item[(i)] {\bf (Quick Direct Spreading)}
		$\displaystyle \lambda ap(a,a)>M_{(i)}.$\\[-2mm]
		
		\item[(ii)] {\bf (Quick Indirect Spreading)}
		$\displaystyle \lambda^2a p^2(a,1)>M_{(ii)}$.\\[-2mm]
	\end{itemize}
	Moreover, in each of these cases we have 
	\begin{equation}
	\label{lowdensityetapos}
	\rho^-(\lambda)\;\geq\;c' \lambda a\int_0^1p(a,s)ds,
	\end{equation}
	where $c'>0$ is a constant independent of $a$ and $\lambda$.
\end{theorem}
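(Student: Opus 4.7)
The plan is a single unified argument. First I would show that, starting from all vertices infected, with high probability the number of infected stars $S_t:=|\{x\in\St:X_t(x)=1\}|$ stays above a threshold $raN/4$ throughout the window $[0,e^{\eps N}]$, for some $r>0$ depending only on the hypothesis. Since the event $\{S_t\ge raN/4\}$ lies in $\F_t^{0,1}$, Lemma~\ref{lemmalower} applied at each such $t$ then yields $\E\big[\sum_{z\in\Vo^{odd}}X_{t+1}(z)\big]\ge cN\lambda a\int_0^1 p(a,s)\,ds$, the passage from $\int(\lambda ap(a,x)\wedge 1)\,dx$ to $\lambda a\int p(a,s)\,ds$ being a routine comparison under either hypothesis (i) or (ii), when $\lambda a p(a,x)$ is $\le 1$ for $x$ outside a small window of $0$). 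This simultaneously delivers slow extinction and the claimed bound on $\rho^-(\lambda)$. The heart of the proof is thus the exponential-time survival of a constant fraction of infected stars, which I would establish by comparing $S_t$ to a birth--death chain with strictly positive drift, in the spirit of the Cator--Don argument~\cite{CD18} for the contact process on regular graphs.

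\textbf{Quick direct spreading.} On the subgraph $\widetilde{\mathscr G}_t$ each star has an expected $\Theta(ap(a,a))$ star-neighbours, so under the hypothesis $\lambda a p(a,a)>M_{(i)}$ every infected star typically has a large infected star-degree. I would formulate a \emph{connectivity condition} requiring that at every time in $[0,e^{\eps N}]$, every star has at least a fixed fraction of $|\St|$ star-neighbours in $\widetilde{\mathscr G}_t$, and establish a proposition (corresponding to Proposition~\ref{catorbound}) showing that the condition holds with high probability up to $e^{\eps N}$, by a union bound over a polynomial-size time-grid together with binomial concentration on the independent variables $\mathcal C^{x,y}$ in the graphical representation. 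On this good event, $S_t$ dominates a birth--death chain on $\{0,\dots,aN/4\}$ whose downward rate is $S_t$ and whose upward rate is at least $c\lambda p(a,a)S_t(|\St|-S_t)/N$. For $M_{(i)}$ sufficiently large this chain has strictly positive drift above the threshold $raN/4$ for a suitable $r$, and standard exponential-martingale bounds (together with a reflecting barrier at $|\St|$) force $S_t\ge raN/4$ throughout $[0,e^{\eps N}]$.

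\textbf{Quick indirect spreading.} Here the direct star--star transmission is too weak and the infection must pass through an intermediate connector in $\Co^1$. I would partition time into epochs of fixed length $\delta$, chosen so that each vertex is \emph{stable} over an epoch (no recovery, no update on relevant adjacent edges) with constant positive probability, and formulate the connectivity condition to require that at every epoch each infected stable star is joined, via length-two paths through stable connectors, to sufficiently many healthy stable stars. Since the typical number of such two-paths from a star is $\Theta(a p(a,1)^2)$ and each transmits the infection across one epoch with probability $\Theta(\lambda^2\delta^2)$, the hypothesis $\lambda^2 a p(a,1)^2>M_{(ii)}$ gives strictly positive drift for the discrete-time analogue of $S_t$. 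A proposition analogous to Proposition~\ref{catorbound2} controls the stability of the two-path structure for exponentially many epochs, and the same birth--death comparison concludes survival; Lemma~\ref{lemmalower} again delivers the density bound.

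\textbf{Main obstacle.} The genuine difficulty is not the birth--death comparison but the proof of the two connectivity propositions on the exponential time scale. In the regime $\eta\le 0$ of primary interest here, star-adjacent edges update at rate $a^{-\gamma\eta}$, which is of order one (and for $\eta<0$ vanishes with $a$), so these edges are neither frozen nor fast-mixing. Consequently the fast-mixing tools of~\cite{JLM19}, which relied on a large number of simultaneous updates at a vertex, are unavailable, and over $[0,e^{\eps N}]$ the edges do experience a super-polynomial number of updates that a purely quenched argument cannot ignore. The technical novelty will be to decouple the update dynamics from the infection dynamics on carefully chosen time slices and to control correlations between edges sharing a common endpoint in $\St\cup\Co$, so that the connectivity condition is preserved by induction across the grid of these time slices.
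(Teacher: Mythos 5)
Your overall skeleton (survival of a linear number of infected stars on the time scale $e^{\eps N}$, a Cator--Don style comparison with a drifted birth--death chain, and then Lemma~\ref{lemmalower} to convert this into the density bound \eqref{lowdensityetapos}) is exactly the paper's strategy for condition (i). But the step you call the ``heart of the proof'' is carried by a connectivity condition that cannot work as you formulated it. You require that at every time in $[0,e^{\eps N}]$ \emph{every} star has at least a fixed fraction of $|\St|=\tfrac a4 N$ star-neighbours in $\widetilde{\mathscr G}_t$. In $\widetilde{\mathscr G}_t$ the star--star connection probability is $p(a,a)/N$, so the expected star-degree of a single star is $\Theta(ap(a,a))$, a constant in $N$ (the hypothesis only makes it a large constant, $>256$); a degree of order $aN$ is therefore impossible. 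Even the repaired per-star version (degree at least $c\,ap(a,a)$ for every star) fails: the per-star, per-time violation probability is a constant bounded away from zero, so no union bound over the (necessarily exponentially many) network changes in $[0,e^{\eps N}]$ can hold it --- note also that your ``polynomial-size time-grid'' is not available, since up to time $e^{\eps N}$ the relevant edges undergo $\Theta(N^2 e^{\eps N})$ updates. The same objection applies verbatim to your per-star two-path condition in the indirect case, where the expected number of stable length-two paths out of a single star is again $\Theta(ap(a,1)^2)=O(1)$.

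The missing idea is to make the connectivity condition \emph{collective} rather than per-vertex: in Proposition~\ref{catorbound} the paper controls, for every subset $A\subseteq\St$ with $\rho_A\in[\rho_0,1-\rho_0]$, the cut size $L_A(t)$ between $A$ and $\St\setminus A$, which has mean $\Theta(a^2p(a,a)N)$, i.e.\ linear in $N$; binomial concentration then gives a per-time, per-set failure probability $e^{-cN}$, which beats both the entropy $e^{O(aN)}$ of the family of sets and, after restricting to the update times $\tau_i$ (at which the induced graph is again stationary), the exponentially many times up to $T=e^{\rho_0 aN/8}$. Since the random set of infected stars $A_t$ is adapted to the dynamics, it is exactly this uniformity over all intermediate-size sets that legitimises feeding $\lambda L_{A_t}(t)\ge c\lambda a^2 p(a,a)N$ into the birth--death comparison; a bound for fixed or typical sets would not suffice. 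For condition (ii) the paper's Proposition~\ref{catorbound2} plays the same role (reachability via stable two-paths from \emph{every} set $A$ with $\rho_A\in[0.05,0.95]$ to at least $\tfrac{1-\rho_A}{20}aN$ stable stars within one epoch $t^*$), and slow extinction then follows from a deterministic induction over epochs on the intersection of these good events rather than from a drift argument. Your diagnosis of the obstacle (slow mixing, unavailability of the fast-mixing machinery of \cite{JLM19}) is correct, and your epoch/stability set-up for (ii) is in the right spirit, but without reformulating both connectivity conditions at the level of linear-size subsets, with failure probabilities exponentially small in $N$, the induction across time slices you propose cannot be closed.
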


\begin{remark}

Condition~(i) gives sharp lower bounds on the metastable exponent for 
\begin{itemize}
	\item the weak kernel with the choice of $a(\lambda)= r \lambda^{1/\gamma}$ for small enough constant $r$. We then get a lower bound for $\rho^-(\lambda)$ of the same order, namely $\lambda^{1/\gamma}$.
	\item the factor kernel in the regime marked brown in Figure~\ref{one}, where the choice of $a(\lambda)= r \lambda^{1/(2\gamma-1)}$ for a small enough constant $r$ gives a lower bound for $\rho^-(\lambda)$ of the order $\lambda^{\gamma/(2\gamma-1)}=\lambda^{1/(3-\tau)}$.
\end{itemize}
Condition~(ii) gives sharp lower bounds for the preferential attachment and strong  kernel in the regime marked in yellow in Figure~\ref{one}, where the choice of $a(\lambda)= r \lambda^{2/(2\gamma-1)}$ for a small enough constant $r$ gives a lower bound for $\rho^-(\lambda)$ of the order $\lambda^{1/(2\gamma-1)}=\lambda^{(\tau-1)/(3-\tau)}$.
\end{remark}

\begin{remark}
Under either condition~\eqref{condp2b} or \eqref{condp} and \eqref{condp2a}, we have $\int_0^1p(a,s)ds\geq ca^{-\gamma}$ so the lower bound in \eqref{lowdensityetapos} is always of the form $c'\lambda a^{1-\gamma}$. In the present form our result does not require these conditions.
\end{remark}

We only address the proofs for the case $\eta<0$, since the proofs for $\eta\geq0$ are almost identical to those for the vertex-updating scheme in~\cite{JLM19}. 

\subsubsection*{Quick direct spreading}
Quick direct spreading, as shown in \cite{JLM19}, is a mechanism in which stars directly infect other stars without the aid of connectors. Out of all the strategies, it is the one which is most affected by the slow mixing of the network, since stars in this case tend to form a network which stays locally unchanged for large periods of time. We begin our analysis by proving in a similar fashion to \cite{CD18}, that if $ap(a,a)$ is large, then with high probability, the set of stars is sufficiently connected for an exponentially long time.\smallskip

\begin{proposition}\label{catorbound}
	For any $A\subseteq\St$ and any $t>0$ let $L_{A}(t)$ be the amount of connections between $A$ and $A^c:=\St\backslash A$  at time $t$, and $\rho_A=|A|/(\frac{a}4 N)$ the relative size of $A$. Then, under the assumption $ap(a,a)>256$ there is some $\rho_0\in(\sfrac14, \sfrac12)$ such that
	\begin{equation}\label{cond1a}\P\big(\exists A\subseteq\St, \rho_A\in[\rho_0,1-\rho_0],\, \text{ with }L_{A}(t)<\sfrac{\rho_A(1-\rho_A)a^2p(a,a)N}{32}\big)\,\leq\,Ne^{-\rho_0 aN/4}.\end{equation}

\end{proposition}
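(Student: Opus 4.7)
The key observation is that, by stationarity of the edge-updating dynamics, the simplified graph $\widetilde{{\mathscr G}}_t$ at the fixed time $t$ is inhomogeneous Erd\H{o}s--R\'enyi: each potential star-star edge is independently present with probability $\widetilde p_{i,j}=p(a,a)/N$. Hence $L_A(t)$ is a sum of $|A|\,|A^c|$ i.i.d.\ Bernoulli$(p(a,a)/N)$ random variables with mean
\[
\mu_A\;=\;\rho_A(1-\rho_A)\,\frac{a^2 p(a,a)N}{16},
\]
and the threshold $\rho_A(1-\rho_A)\,a^2 p(a,a)N/32$ in the statement is exactly $\mu_A/2$. The plan is therefore a standard concentration-plus-union-bound argument on the stationary graph, in the spirit of \cite{CD18}.

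For a fixed $A$ with $\rho_A\in[\rho_0,1-\rho_0]$, the multiplicative lower-tail Chernoff bound gives $\P(L_A(t)<\mu_A/2)\le \exp(-\mu_A/8)$. I would then union bound over all subsets $A\subseteq \St$ of each cardinality $k=\rho_A\cdot aN/4$ using the entropy estimate $\binom{aN/4}{k}\le\exp(H(\rho_A)\,aN/4)$, and sum over $k$ (at most $aN/4\le N$ values). Factoring $aN/4$ out of the exponent yields
\[
\P\bigl(\exists\, A\text{ bad}\bigr)\;\le\;N\,\sup_{\rho\in[\rho_0,1-\rho_0]}\exp\!\Bigl(\tfrac{aN}{4}\Bigl[H(\rho)-\tfrac{\rho(1-\rho)\,ap(a,a)}{32}\Bigr]\Bigr).
\]

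The final step is to calibrate $\rho_0$. Under the hypothesis $ap(a,a)>256$ the coefficient of $\rho(1-\rho)$ in the exponent exceeds~$8$, so using $H(\rho)\le\log 2$ and $\rho(1-\rho)\ge\rho_0(1-\rho_0)$ on the relevant interval, the bracket is strictly less than $\log 2-8\rho_0(1-\rho_0)$. A direct check shows that $8\rho_0(1-\rho_0)\ge\log 2+\rho_0$ holds for every $\rho_0\in(1/4,1/2)$ (already at $\rho_0=1/4$ the left-hand side equals $3/2$ while the right-hand side is about $0.94$), so the bracket is bounded by $-\rho_0$, producing the announced estimate $Ne^{-\rho_0 aN/4}$. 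The argument is essentially elementary; the only delicate point is verifying that the hypothesis $ap(a,a)>256$ leaves enough slack between the binomial entropy $H$ and the Chernoff exponent for some $\rho_0$ strictly inside $(1/4,1/2)$ to work, which the computation above confirms.
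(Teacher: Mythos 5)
Your proposal is correct and follows essentially the same route as the paper's proof: by stationarity $L_A(t)$ is binomial with mean $\rho_A(1-\rho_A)a^2p(a,a)N/16$, a multiplicative Chernoff bound at half the mean gives the exponent $\rho(1-\rho)a^2p(a,a)N/128$, and a union bound over subsets of each cardinality plus over the at most $N$ cardinalities yields $Ne^{-\rho_0 aN/4}$. The only cosmetic differences are your use of the binary-entropy bound on $\binom{aN/4}{k}$ instead of $(e/\rho)^{\rho aN/4}$ and your explicit verification that $8\rho_0(1-\rho_0)\ge \log 2+\rho_0$ for every $\rho_0\in(\sfrac14,\sfrac12)$, where the paper instead argues by continuity near $\rho=\sfrac12$; both calibrations are valid.
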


\medskip

\begin{proof}
	From the stationarity of the dynamics it follows that for any set $A\subseteq\St$, the random variable $L_{A}(t)$ follows a binomial distribution with parameters $\rho_A(1-\rho_A)( \frac{a}4N)^2$ and $\frac{1}{N}p(a,a)$, so using a Chernoff bound we obtain
	\[\P\big(L_{A}(t)<\sfrac{\rho_A(1-\rho_A)a^2p(a,a)N}{32}\big)\,\leq\,e^{-\rho_A(1-\rho_A)a^2p(a,a)N/128}.\]
	Now, for any $\rho\in\{1/( \frac{a}4 N),\ldots,1\}$, the amount of sets $A\subseteq\St$ with $\rho_A=\rho$ is
	\[{{ \frac{a}4N}\choose{\rho \frac{a}4 N}}\,\leq\,\big(\frac{e}{\rho}\big)^{\rho\frac{a}4N}\,\leq\,e^{\rho \frac{a}4N(1-\log(\rho))},\]
	so from a union bound it follows that
	\[\P\big(\exists A\subseteq\St, \rho_A=\rho \text{ with }L_{A}(t)<\sfrac{\rho(1-\rho)a^2p(a,a)N}{32}\big)\,\leq\, \exp\Big(-\rho \tfrac{a}4 N\big( \sfrac{(1-\rho)ap(a,a)}{32}-1+\log\rho\big)\Big).\]
	From our assumption, we know that $ap(a,a)>256$ so that the expression between parenthesis is bounded from below by $8(1-\rho)-1+\log(\rho)$, which is strictly larger than $2$ for $\rho=1/2$, and hence also in some interval $[\rho_0,1-\rho_0]$ where $1/4<\rho_0<1/2$. Suppose that the condition on the left-hand side is satisfied, and observe that there are at most $ \frac{a}4N$ possible values of $\rho_A$, so that
	\[\P\big(\exists \rho\in[\rho_0,1-\rho_0],\,A\subseteq\St, \rho_A=\rho \text{ with }L_{A}(t)<\sfrac{\rho(1-\rho)a^2p(a,a)N}{32}\big)\,\leq\,Ne^{-\rho_0 aN/4}.\] \\[-11mm]
\end{proof}
\medskip

Using the proposition we now prove (i) in Theorem \ref{teolower}. Choose~$a$ such that $\lambda ap(a,a)>M_{(i)}:=256$, which  implies that $ap(a,a)>256$, and take $\rho_0$ as in Proposition \ref{catorbound}. Let \smash{$\{\tau_i\}_{i\in\N}$}  be the ordered sequence of updating times of edges between vertices in $\St$, i.e., $\tau_0=0$ and
\[\tau_{i+1}\,:=\;\inf\Big\{t>\tau_i \colon \bigcup_{u,v\in\St}\mathcal{U}^{u,v}\cap(\tau_i,t]\neq\emptyset\Big\}\]
and observe that the subgraph induced by $\St$ is stationary at these times as well. As a result we can apply the previous proposition to the random variables $L_A
(\tau_i)$ to obtain
\[\P\big(\exists A\subseteq\St, \rho_A\in[\rho_0,1-\rho_0] \text{ with }L_{A}(\tau_i)<\sfrac{\rho(1-\rho)a^2p(a,a)N}{32}\big)\,\leq\,Ne^{-\rho_0 aN/4}.\]
Now, since the time between updates $\tau_{i+1}-\tau_i$ is the minimum of $ \sfrac{a}4N(\sfrac{a}4N-1)$ exponential random variables with rate bounded from above by $\varkappa$, the amount of updating events on each interval $[0,T]$ is bounded from above by a Poisson random variable with mean $N^2\varkappa T$. Using a large deviation bound, we have
\[\P(\tau_{2\lceil N^2\varkappa T\rceil}<T)\,\leq\,e^{-\frac{N^2\varkappa  T}{4}}.\]
Since the network changes only at times of the form $\tau_i$, by taking $T=e^{\rho_0 aN/8}$ we can extend Proposition \ref{catorbound} to the whole interval $[0,T]$, that is, defining the event
\[\mathcal{E}_0\,:=\,\left\{\exists t\in[0,T], \,A\subseteq\St, \rho_A \in[\rho_0,1-\rho_0] \text{ with }L_{A}(t)<\sfrac{\rho_A(1-\rho_A)a^2p(a,a)N}{32}\right\},\]
its probability is bounded by
\[e^{-\frac{N^2\varkappa  T}{4}}\,+\,\P\big(\exists i\leq \lceil 2N^2\varkappa T\rceil, \,A\subseteq\St \colon \rho_A\in[\rho_0,1-\rho_0] \text{ with }L_{A}(\tau_i)<\sfrac{\rho(1-\rho)a^2p(a,a)N}{32}\big),\]
which is smaller than \smash{$e^{-\frac{N^2\varkappa  T}{4}}+2N^3\varkappa  Te^{-\rho_0 aN/4}\;\leq\;e^{-cN}$} for all $N$ large and $T=e^{cN}$ for some $c>0$ depending on $a$ and $\rho_0$ but not on $N$. The event $\mathcal{E}_0$ depends only on the processes $\{\mathcal{U}^{u,v},\mathcal{C}^{u,v}\}_{u,v\in\St}$ and by fixing a realization of these processes, we obtain
\[\P(T_{\rm ext}>T)\;\leq\;\E\big[\P\big(T_{\rm ext}>T\,\big|\,\{\mathcal{U}^{u,v},\mathcal{C}^{u,v}\}_{u,v\in\St}\big){\bf 1}_{\mathcal{E}_0^c}\big]\,+\,e^{-cN}.\]
It remains to prove that for any fixed realization of the environment satisfying the condition of $\mathcal{E}_0^c$, the process (now depending only on the $\mathcal{R}$ and $\mathcal{I}$) shows slow extinction. To do so, couple $X_t$ to a new process $X'_t$ based on the same input from the graphical construction  except that the initial set of infected vertices are the even integers in $\{\sfrac{aN}{2},\ldots, \sfrac{3aN}{4}\}$, and  that it ignores all infection events taking place at edges $\{x,y\}$ with $x\in\St$ and $y\in\Co$. By monotonicity, $X'_t\leq X_t$ for all $t\geq0$, and hence it is enough to show slow extinction for $X'_t$.\pagebreak[3]%
\bigskip%

Define $A_t$ as the set of infected stars at time $t$ for $X'$, that is, $A_t:=\{v\in\St \colon X'_t(v)=1\}$. It is easy to see that at every time $t$, $|A_t|$ decreases by one at rate $|A_t|$ (since every infected vertex recovers at rate 1), and it increases by one at rate $\lambda L_{A_t}(t)$ (since a healthy stars gets infected precisely when an infection traverses an edge between $A_t$ and $A_t^c$). From our choice of the environment we know that for $t\leq T$ there is no set $A\subseteq\St$ with $\rho_A\in[\rho_0,1-\rho_0]$ such that $L_A(t)<\sfrac{1}{32} \rho(1-\rho)a^2p(a,a)N$, so it follows that 
\[|A_t|\in[\rho_0 \sfrac{a}4 N,(1-\rho_0)\sfrac{a}4N]\;\Longrightarrow\;\lambda L_{A_t}(t)\geq \sfrac{1}{32} {\lambda \rho_{A_t}(1-\rho_{A_t})a^2p(a,a)N}.\]
Instead of proving slow extinction we go a little further and prove a stronger result instead, namely, that there is some $c>0$ such that
\[\P(\exists t<e^{cN},\,|A_t|< \rho_0 \sfrac{a}4 N)\leq e^{-cN}.\]
In order to control $|A_t|$ we define a continuous time random walk $\{B_t\}_{t\geq0}$ on the set $\{\lceil \rho_0 \sfrac{a}4N\rceil,\ldots,\lceil (1-\rho_0)\sfrac{a}4N\rceil\}\subseteq\N$ starting at $B_0=\sfrac{a}4 N$ with rates
\begin{align*}
B_t\longrightarrow B_t-1&\quad \mbox{ at rate }\quad B_t\\
B_t\longrightarrow B_t+1&\quad \mbox{ at rate }\quad \sfrac{\lambda B_t\rho_0 ap(a,a)}{8},  
\end{align*}
and where $\lceil \rho_0 \sfrac{a}2N\rceil$ and $\lceil (1-\rho_0)\sfrac{a}2N\rceil$ act as absorbing and reflecting barriers, respectively. From the lower bound on $\lambda L_{A_t}(t)$ and monotonicity it is easy to see that we can couple $X'$ and $B$ in such a way that $B_t\leq |A_t|$ for all $t\leq\tau_B$, where $\tau_B$ is the time at which $B_t$ hits its absorbing state.\smallskip\pagebreak[3]

From our assumption $\rho_0>1/4$ and $\lambda ap(a,a)>256$ we obtain \smash{$\sfrac{\lambda \rho_0ap(a,a)}{8}>8$}, and hence at each jump the process has probability at least $\sfrac{8}{9}$ to increase by one. It follows that~$B_t$ is a continuous time random walk with a drift towards $\lceil (1-\rho_0)\sfrac{a}4N\rceil$ whose rates are bounded by $N$. For such a process we have (see e.g.~\cite{MP16}) that there is $c=c(a,\rho_0)$ such that
\begin{equation}\label{finalqds}
\P(\exists t<e^{cN},\,|A_t|< \rho_0 aN)\,\leq\, \P(\tau_B< e^{cN})\,\leq\,e^{-cN},
\end{equation}
and hence there is slow extinction. Inequality 
\eqref{lowdensityetapos} follows  from Lemma~\ref{lemmalower} because our proof only involves processes and variables which are \smash{$\mathfrak{F}_\infty^{0,1}$}-measurable.
\medskip

\subsubsection*{Quick indirect spreading} 

Quick indirect spreading is a mechanism in which stars infect other stars with the aid of connectors. In \cite{JLM19} we showed that this mechanism gives slow extinction by dividing time into small periods and controlling the spread of the infection on each period, relying on the fact that the network mixes sufficiently fast. In the current model, edges adjacent to connectors update at constant rate so it is possible to adapt what was done in \cite{JLM19}, although relying on this feature of the network would raise reasonable doubt about whether a similar result could hold on static networks, represented in our model by taking $\eta=-\infty$. An alternative proof would be to use a method similar to what was done in Proposition \ref{catorbound}, thus using combinatorial arguments to show that the network exhibits good connection properties for an exponentially long time, and then running the process on top of a fixed realization of the environment. Even though possible, this proof would require addressing some subtle difficulties which arise in this mechanism such as dependencies and possible bottlenecks. Our approach is therefore based on a mixture of the two methods; in the proposition below we use combinatorial arguments to show that for an exponentially large time, from any set of stars $A$ not too small or large there are sufficiently many infection paths towards $\St\backslash A$ (observe that in Proposition \ref{catorbound} we showed a similar property but only regarding connections). Slow extinction will easily follow from this proposition.\medskip

Fix $t^*=-\log(99/100)(1+2\varkappa )^{-1}$, which will be used as a unit of time, and define $T_0=(0,t^*]$. A vertex $v$ is called $T_0$\emph{-stable} if $\mathcal{R}^v\cap T_0=\emptyset$, i.e., it does not recover in $(0,t^*]$. For any set $A\subseteq\St$ and any $T_0$-stable $z\in\St\backslash A$ we say that $z$ can be reached from $A$ if there is a path $P=(x,y,z)$ in ${\mathscr G}_0$  with
\begin{itemize}\setlength\itemsep{1ex}
	\item $x\in A$, $y\in \Co$,
	\item both $x$ and $y$ are $T_0$-stable,
	\item $(\mathcal{U}^{x,y}\cup\mathcal{U}^{y,z})\cap T_0=\emptyset$, that is, neither the edge $(x,y)$ nor $(y,z)$ update in $(0,t^*]$,
	\item $\mathcal{I}_0^{x,y}\cap[0,t^*/2]\neq \emptyset$, and $\mathcal{I}_0^{y,z}\cap[t^*/2,t^*]\neq \emptyset$.
\end{itemize}
 We define $S(A)$ as the set of all $T_0$-stable $z\in \St\backslash A$ that can be reached from $A$.  \medskip

\begin{proposition}\label{catorbound2}
	Set $t^*$ as above and assume that condition~(ii) in Theorem \ref{teolower} holds for $M_{(ii)}$ sufficiently large. Then for any $N$ large we have
\begin{equation}\label{cond2:quick}\P\big(\exists \,A\subseteq\St, \rho_A\in[0.05,0.95]\text{ with }|S(A)|<\sfrac{1-\rho_A}{20}aN\big)\,\leq\,e^{-hN},\end{equation}
for some $h=h(a,\lambda)$, and where $\rho_A=|A|/(\tfrac{a}{4}N)$ as in Proposition \ref{catorbound}.
	\end{proposition}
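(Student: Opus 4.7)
The plan is to decompose the length-two paths from $A$ to $z\in\St\setminus A$ into two independent halves, handle each by Chernoff concentration on a family of independent events, and close with a union bound over the choice of $A$. Throughout I would exploit $\eta<0$: every star and every connector has strength rate at most $\varkappa$, so every star-connector update rate $\kappa_{x,y}$ is bounded by $2\varkappa$, and the choice $t^*=-\log(99/100)/(1+2\varkappa)$ makes both $e^{-t^*}$ and $e^{-t^*\kappa_{x,y}}$ bounded below by $99/100$. In particular, for any star-connector pair $(x,y)$ the probability that $x$ and $y$ are both $T_0$-stable, $\{x,y\}$ is in $\mathscr G_0$ and does not update in $T_0$, and $\mathcal I_0^{x,y}$ has a point in a prescribed half of $T_0$, is at least $c\lambda p(a,1)/N$ for a constant $c=c(\varkappa)>0$ uniform in small $\lambda$.

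For the first half I would introduce, for each $y\in\Co^1$, the event $F_y$ that $y$ is $T_0$-stable and some $x\in A$ realises these conditions on $[0,t^*/2]$. After conditioning on the recovery statuses $\{T_x\}_{x\in A}$, the events $F_y$ become independent across $y$ because they involve disjoint connector vertices and disjoint edges. A first Chernoff bound gives $|A^*|:=|\{x\in A:T_x\}|\ge|A|/2$ off an exponentially small event, on which $\P(F_y\mid A^*)\ge\tfrac{99}{100}\bigl(1-\exp(-c\lambda p(a,1)\rho_A a/8)\bigr)$. A second Chernoff bound then forces $|Y|:=|\{y\in\Co^1:F_y\}|$ above half its conditional mean $m^*$. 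Invoking condition~(ii) and splitting on whether $\lambda p(a,1)\rho_A a$ exceeds $1$ or not, a short calculation shows that $m^*\cdot\lambda p(a,1)/N\ge c\sqrt{M_{(ii)}}$ in either regime, hence tends to infinity with $M_{(ii)}$.

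For the second half, define for each $y\in\Co^1$ and $z\in\St\setminus A$ the event $H_y^z$ that $\{y,z\}$ is in $\mathscr G_0$, does not update in $T_0$, and carries an infection in $[t^*/2,t^*]$; by the same bound $\P(H_y^z)\ge c\lambda p(a,1)/N$. Crucially, $H_y^z$ is independent of $F_y$ and of every $H_{y'}^{z'}$ with $(y',z')\ne(y,z)$, since all these events involve mutually disjoint edges and infection processes. Conditional on $Y$, the events $\{z\in S(A)\}_{z\in\St\setminus A}$ are therefore independent, with probability at least
\[
p_1\;:=\;\tfrac{99}{100}\bigl(1-\exp(-c\,|Y|\lambda p(a,1)/N)\bigr),
\]
which can be made as close to $1$ as desired by taking $M_{(ii)}$ large. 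A third Chernoff bound now yields
\[
\P\bigl(|S(A)|<\tfrac{(1-\rho_A)aN}{20}\bigm|Y\bigr)\;\le\;\exp\bigl(-c_4(1-\rho_A)aN\bigr),
\]
with $c_4=c_4(p_1)$ a Kullback--Leibler rate that grows without bound as $p_1\to 1$.

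The hard part will be closing the union bound. The three Chernoff estimates combine to give $\P(|S(A)|<(1-\rho_A)aN/20)\le e^{-h'N}$ uniformly in $A$ with $\rho_A\in[0.05,0.95]$, but the naive union bound over subsets of $\St$ inflates this by $2^{|\St|}=2^{aN/4}$. I would therefore choose $M_{(ii)}$ large enough that $c_4$ pushes $h'$ strictly above $a(\log 2)/4$; because $c_4(p_1)\to\infty$ as $p_1\to 1$, this is always achievable, yielding the claimed bound $e^{-hN}$ for some $h=h(a,\lambda)>0$.
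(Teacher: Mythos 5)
Your overall architecture mirrors the paper's (split the length-two paths at the intermediate connector, condition to decouple the two halves, concentrate, then union bound over $A$ beaten by taking $M_{(ii)}$ large), but the union-bound bookkeeping has a genuine gap: of your three concentration estimates, only the middle one has a rate that grows with $M_{(ii)}$, whereas your closing step needs \emph{all} per-$A$ failure rates to beat the subset entropy. Concretely: (i) your first Chernoff bound, that at least half of $A$ is $T_0$-stable, fails with probability about $\exp\bigl(-D(\tfrac12\|\tfrac{99}{100})|A|\bigr)\approx e^{-1.6|A|}$, a rate independent of $M_{(ii)}$; at $\rho_A=0.05$ this is about $e^{-0.02aN}$, while the number of sets with $\rho_A=0.05$ is already about $e^{0.05aN}$, so the union bound over $A$ diverges no matter how large $M_{(ii)}$ is. (ii) Your $c_4=D(0.2\|p_1)$ cannot be made large either, because your own $p_1=\tfrac{99}{100}\bigl(1-\exp(-c|Y|\lambda p(a,1)/N)\bigr)$ is capped at $\tfrac{99}{100}$ (the stability of the target $z$ does not improve with $M_{(ii)}$), so $c_4\le D(0.2\|0.99)\approx 3.2$; at $\rho_A$ near $0.95$ the resulting rate, roughly $3.2\,(1-\rho_A)\tfrac a4 N\approx 0.04\,aN$, is far below the entropy $\rho_A\tfrac a4 N(1-\log\rho_A)\approx 0.25\,aN$. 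So as written the union bound fails at both ends of the range of $\rho_A$, and the proposed mechanism ($c_4\to\infty$ as $p_1\to1$) cannot repair it, since $p_1$ does not tend to $1$.

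The paper's proof fixes exactly these two points. Stability is removed from the per-$A$ accounting: a single global event, that the number of non-stable stars (resp.\ connectors) is at most $0.005\,aN$ (resp.\ $0.005\,N$), holds with probability $1-e^{-h_1aN}$ and deterministically yields $|A^s|\ge\rho aN/8$ and $|(\St\setminus A)^s|\ge(1-\rho)aN/8$ for \emph{every} admissible $A$ simultaneously, so no union bound is spent on stability. Moreover the second-stage candidates are drawn only from the already-stable stars $(\St\setminus A)^s$, so the per-candidate success probability is $q=1-\exp(-c\rho M_{(ii)})$, which genuinely tends to $1$; comparing the binomial to half its mean gives the relative-entropy rate $D(q/2\|q)\ge-\log 2+\tfrac{(t^*)^2}{800}\rho M_{(ii)}$, which grows linearly in $M_{(ii)}$ and therefore beats the entropy $\rho\tfrac a4 N(1-\log\rho)$ uniformly over $\rho\in[0.05,0.95]$. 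If you restructure accordingly (global stability event; fold only the ``reached'' part, not stability, into the per-candidate probability; threshold at a fixed fraction of the \emph{stable} candidates), your decomposition and conditioning go through essentially as in the paper.
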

\pagebreak[3]

\begin{proof} 
	Define $\Co^s$ and $\St^s$ as the set of connectors and stars, respectively, that are $T_0$-stable. Observe that any vertex is $T_0$-stable independently with probability at least $e^{-t^*}\geq99/100$ from our choice of $t^*$. It follows from a Chernoff bound that there is some $h_1>0$ independent of $a$ and $N$ such that
	\[\P\big(|\Co^s|>\sfrac{98N}{400},\,|\St^s|>\sfrac{98aN}{400}\big)\;\geq\;1-e^{-h_1aN}.\]
	Observe that this event depends only on the $\mathcal{R}$ processes, and from the bound above it will be enough to show \eqref{cond2:quick} for any realisation of those processes satisfying this event. Fix now $A\subseteq\St$ and abbreviate $\rho=\rho_A$, which we assume satisfies $0.05\leq \rho\leq 0.95$.
	Define the sets $A^s$ and $(\St\backslash A)^{s}$ as the set of stars in $A$ and $\St\backslash A$, respectively, that are $T_0$-stable, which must satisfy $|A^s|>\rho aN/8$ and $|(\St\backslash A)^s|>(1-\rho)aN/8$ under the assumption $|\Co^s|>\sfrac{98N}{400},\,|\St^s|>\sfrac{98aN}{400}$ and the bounds on  $\rho$. With $A$ fixed let
	\[\Co^s(A):=\big\{y\in\Co^s\colon \exists x\in A^s\text{ with }\{x,y\}\in\mathscr G_0\text{, }\mathcal{U}^{x,y}\cap T_0=\emptyset\text{, and }\mathcal{I}_0^{x,y}\cap[0,t^*/2]\neq \emptyset\big\}\]
	be the set of connectors $y\in\Co^s$ initially connected to some $x\in A^s$, whose connection does not update during $T_0$, and such that an infection event occurs between $x$ and $y$ in $[0,t^*/2]$. For any given $y\in\Co^s$ the probability of satisfying these conditions for a given star $x\in A^s$ is given by
	\[\tfrac{p(a,1)}{N}e^{-2\varkappa  t^*}(1-e^{-\lambda t^*/2})\geq \sfrac{\lambda t^*p(a,1)}{3N}\]
	and hence
	\[\P(y\in \Co^s(A))\geq 1-\big(1-\sfrac{\lambda t^*p(a,1)}{3N}\big)^{\rho aN/8}\,\geq\,\sfrac{1}{30}\lambda t^*\rho ap(a,1)\]
	because $\lambda t^*ap(a,1)$ can be taken sufficiently small. Since the events $\{y\in\Co^s(A)\}_{y\in\Co^s}$ are independent, using a large deviation we conclude that there is some constant $h_2$ such that
	\begin{equation}\label{eq:auxprop2}\P\big(|\Co^s(A)|\geq \sfrac{1}{128}\lambda t^*\rho ap(a,1)N\big)>1-e^{-h_2\lambda t^*\rho ap(a,1)N},\end{equation}
	where from now on we call $\mathcal{E}_1$ the event on the left hand side above. Let $\sigma$ be the $\sigma$-algebra generated by the connections, updatings and infection events between $A^s$ and $\Co^s$ on $T_0$. We will work on realizations of these processes such that $|\Co^s(A)|\geq \sfrac{1}{128}\lambda t^*\rho ap(a,1)N$. For $z\in(\St\backslash A)^s$ and $y\in\Co^s(A)$ define $\mathcal{E}_2(z,y)$ as the event where
	\begin{itemize}\setlength\itemsep{0.5ex}
		\item $\{x,y\}\in\mathscr G_0$ and $\mathcal{U}^{y,z}\cap T_0=\emptyset$
		\item $\mathcal{I}_0^{y,z}\cap[t^*/2,t^*]\neq \emptyset$
	\end{itemize}
and call $\mathcal{E}_2(z)=\cup_{y\in\Co^s(A)}\mathcal{E}_2(z,y)$. Proceeding as before we conclude that on $\mathcal{E}_1$
 $$\P(\mathcal{E}(z,y)|\sigma)\geq \sfrac{t^*p(a,1)}{3N}$$ and hence for any $z\in(\St\backslash A)^s$, on $\mathcal{E}_1$
\begin{align*}\P(\mathcal{E}_2(z)\,|\,\sigma)&\geq 1-\big(1-\sfrac{\lambda t^*p(a,1)}{3N}\big)^{\lambda t^*\rho ap(a,1)N/128}\\[2pt]&\geq\,1-\exp\big(-\sfrac{(t^*)^2\rho}{400}\lambda^2 ap^2(a,1)\big)\;\geq\;1-\exp\big(-\sfrac{(t^*)^2}{400}\rho M_{(ii)}\big)=:q.\end{align*}
Now, let $\widetilde{S}$ be the set of all the $z\in(\St\backslash A)^s$ satisfying $\mathcal{E}_2(z)$, whose size is given by a binomial random variable with parameters $q$ and $|(\St\backslash A)^s|$ so Hoeffding's inequality yields
	\[\P\big(|\widetilde{S}|>\sfrac{1-\rho}{16}qaN\,|\,\sigma)\;\geq\;1-e^{-\frac{(1-\rho)}{8}aND(\frac{q}{2}||q)},\]
	on $\mathcal{E}_1$, where $D(b||c)=b\log(b/c)+(1-b)\log(\frac{1-b}{1-c})$ is the relative entropy between two coins with probabilities $a$, resp.~$b$, of heads.
	From the definition of $q$ we obtain
	\[D(\sfrac{q}{2}||q)\,\geq\,-\log(2)-\sfrac{1}{2}\log(1-q)\,=\,-\log(2)+\sfrac{(t^*)^2}{800}\rho M_{(ii)},\]
	and we conclude that
	\begin{equation}\label{eq:aux2prop2}\P\big(|\widetilde{S}|>\sfrac{1-\rho}{8}qaN\,|\,\mathcal{E}_1)\;\geq\;1-2e^{-(1-\rho)\rho (t^*)^2aM_{(ii)}N/1600}.\end{equation}
	By definition of $\tilde{S}$ and $\Co^s(A)$, for any $z\in\tilde{S}$ there is at least one $y\in\Co^s(A)$ and one $x\in A$ such that $(x,y,z)$ is a path throughout $T_0$, both $x$ and $y$ are $T_0$-stable, and $\mathcal{I}_0^{x,y}\cap[0,t^*/2]\neq \emptyset$, and $\mathcal{I}_0^{y,z}\cap[t^*/2,t^*]\neq \emptyset$. Therefore $\tilde{S}\subseteq S(A)$ and hence putting together \eqref{eq:auxprop2}
 and \eqref{eq:aux2prop2} we obtain
 	\begin{align*}\P\big(|S(A)|>\sfrac{1-\rho}{20}aN)&\geq\P\big(|S(A)|>\sfrac{1-\rho}{16}qaN)\\[2pt]&\geq\;1-2e^{-(1-\rho)\rho (t^*)^2aM_{(ii)}N/1600}-e^{-h_2\lambda t^*\rho ap(a,1)N}\\[2pt]&\geq\;1-3e^{-(1-\rho)\rho (t^*)^2aM_{(ii)}N/1600},\end{align*}
 	where in the first inequality we used that $M_{(ii)}$ is large so that $q>4/5$, and in the third inequality we have used  \eqref{condp} to deduce that $\lambda p(a,1)\geq \lambda^2 ap^2(a,1)\geq M_{(ii)}$. Now, for any $\rho\in\{{1}/{ \frac{a}4 N},\ldots,1\}$, the amount of sets $A\subseteq\St$ with $\rho_A=\rho$ is
	\[{{\frac{a}4 N}\choose{\rho \frac{a}4 N}}\,\leq\,\left(\frac{e}{\rho}\right)^{\rho \frac{a}4N}\,\leq\,e^{\rho  \frac{a}4N(1-\log(\rho))}.\]
	Hence, by a union bound,
	\[\P\big(\exists A\subseteq\St, \rho_A=\rho \text{ with }|S(A)|\leq\sfrac{1-\rho}{20}aN\big)\,\leq\,3e^{-\rho a\frac{N}4\big( \sfrac{(1-\rho)(t^*)^2M_{(ii)}}{400}-1+\log(\rho)\big)}.\]
	Observe that the expression in parenthesis is increasing in $M_{(ii)}$ and concave in~$\rho$, so by taking $M_{(ii)}$ sufficiently large (depending on $\varkappa $) we obtain
	that for all $\rho\in[0.05,0.95]$ we have $(1-\rho)(t^*)^2M_{(ii)}/400-1+\log(\rho)\geq 1$. As there are at most $ \frac{a}4N$ possible values of~$\rho$, we infer the result.
\end{proof}

\medskip

In order to prove the main statement we divide time into intervals of the form $T_n:=(nt^*,(n+1)t^*]$. For any $n\in\N$ and $A\subseteq\St$ construct the set $S_n(A)$ analogously to $S(A)$ but replacing all instances of $T_0$-stability by $T_n$-stability and the connections in $\mathscr G_{nt^*}$ instead of $\mathscr G_0$. Define events
\begin{align*}
\mathcal{D}^n_1&:=\;\{|\St^{u,n}|\leq 0.005aN\},\\
\mathcal{D}^n_2&:=\;\{\forall\ A\subseteq\St \mbox{ with } \rho_A\in[0.05,0.95] \text{ we have }|S_n(A)|\geq\sfrac{1-\rho_A}{20}aN\},
\end{align*}
where $\St^{u,n}$ is the set of stars that are not $T_n$-stable. From stationarity and Proposition \ref{catorbound2} we know that there is some $h_1>0$ depending on $a$ alone such that $\P(\mathcal{D}_2^n)>1-e^{-h_1 N}$. On the other hand, we know that each star is $T_n$-stable independently with probability $99/100$ so a concentration inequality gives $h_2>0$ also depending on $a$, such that $\P(\mathcal{D}_1^n)>1- e^{-h_2 N}$. Take $h=\min\{h_1,h_2\}$ and define
\[\mathcal{D}:=\bigcap_{n=0}^{e^{hN/2}}(\mathcal{D}_1^n\cap\mathcal{D}_2^n),\]
which from the argument above has probability at least $1-2e^{-hN/2}$. Define $\St_n:=\{x\in\St \colon X_{nt^*}(x)=1\}$ and $\rho_n=|\St_n|/(\sfrac{a}4 N)$, and observe that $\rho_0=1$ since we start from full occupancy. We will show inductively that on $\mathcal{D}$ we have $\rho_n>0.1$ for all $n\leq e^{hN/2}$ as follows: Suppose first that  $\rho_n>0.2$ and observe that since $\mathcal{D}_1^n$ is satisfied there are at most $0.02\sfrac{a}4N$ stars that can recover on $T_n$ and hence $\rho_{n+1}\geq\rho_n-0.02>0.1$. Suppose next that $0.1<\rho_n\leq 0.2$ so in particular by definition of $\mathcal{D}_2^n$ we have
\[|S_n(\St_n)|\;\geq\;\sfrac{1-\rho_n}{20}aN\;\geq\;0.16\sfrac{aN}{4}.\]
Since $\St_n$ are the stars that are infected at time $t^* n$, all the infections in the definition of $S_n(\St_n)$ are valid and hence any $z\in S_n(\St_n)$ satisfies $X_{(n+1)t^*}(z)=1$, so it follows that $\rho_{n+1}>0.1$. We conclude that as long as $\rho_n>0.1$ the same is true for $\rho_{n+1}$ and hence we have slow extinction.\smallskip

Again, inequality \eqref{lowdensityetapos} follows  from Lemma~\ref{lemmalower} because our proof only involves processes and variables which are $\mathfrak{F}_\infty^{0,1}$-measurable.
\smallskip

\subsection{Local Survival}\label{localsurvival}

In the previous section we proved that on a set of sufficiently powerful vertices, there are two survival strategies based on spreading the infection on this set either directly or with the help of intermediate connectors. An alternative strategy is for a sufficiently powerful vertex to retain the infection for a longer time with the help of its immediate neighbours and thereby effectively increase the time scale on which the infection can spread between stars. We shall see that, if the updating of edges is sufficiently slow,
such a local survival can hold for a time scale  which is stretched exponentially large in the degree of the powerful vertex, in contrast to strategies in \cite{JLM19}, which only allowed a polynomial local survival time. This qualitative difference implies that while in \cite{JLM19} further restrictions on the set of stars had to be imposed to enable  either direct or indirect spreading in the new timescale,
in our \textit{local survival} regime the delay is so effective that the infection can spread on the set of stars without further restrictions.
While for $\eta\leq0$  local survival is possible for vertices 
with degree $k\gg\lambda^{-2}$ (see~\cite{BB+} for results in the static case), in our dynamical network
with $0<\eta<\frac12$  the updating events of edges can be thought about as additional recovery events  so that local survival requires the more restrictive condition 
$k \gg \lambda^{-2}\kappa^{2}$, where $\kappa$ is the update rate associated with adjacent edges.\bigskip

We begin by defining $\ut$, which is the unit of time at which, when looking at a particular star and its neighbours, we typically see a constant amount of recovery and updating events, that is with $\kappa(x)=\kappa_{\lfloor xN\rfloor}$ we set 
\[\ut\,=\,\frac{1}{1+\kappa(\sfrac{1}{2})+\kappa(\sfrac{a}{2})}.\]
Define a sequence $(J_{k})_{k\ge 0}$ of time intervals with $J_0=[0,2\ut)$, $J_1=[0,3\ut)$, and for $k\geq 1$,
\[J_{k}\,:=\,[(k-2)\ut,(k+2)\ut),\]
where each $J_k$ has length $4\ut$ which will be of order $\Theta(a^{\gamma\eta})$ when $\eta>0$, and $\Theta(1)$ otherwise. We use these intervals as building blocks not only in our proof for local survival of a single star, but also when proving slow extinction, where many stars survive locally using the same set of connectors. With this in mind and observing that for different stars $x$ and $x'$ the processes $\I^{x,y}$, $\I^{x',y}$, $\Rec^x$, $\Rec^{x'}$, $\U^{x,y}$ and $\U^{x',y}$ with $y\in\Co^0$, as well as the connections $\mathcal{C}^{x,y}$, $\mathcal{C}^{x',y}$ are independent, we can make local survival events independent across different stars as soon as we work on a fixed realisation of the $\{\Rec^{y}\}_{y\in\Co^0}$ processes. Define
\[\Co_{k}:=\,\{y\in\Co^0\colon \Rec^y\cap J_k=\emptyset\}\]
where it follows from a large deviation argument and the definition of $\ut$ that there is a constant $\cc_0>0$ independent of $a$, $\lambda$ and $N$ such that
\begin{equation}\label{localstable}
\P(|\Co_{k}|> \cc_0N \; \mbox{ for all } k\in\{0,\ldots,\lfloor e^{\cc_0 N}\rfloor\})\,\geq\,1-e^{-\cc_0 N}.
\end{equation}
Since this bound is already of the form required for slow extinction, we will work from now on on a fixed realisation of the $\{\Rec^{y}\}_{y\in\Co^0}$ processes such that the event on the left holds. Going back to the specific case of local survival of a fixed star $x$ we can refine the previous definition to introduce the set of \textit{stable neighbours} as
\[\Co_{k,x}:=\,\{y\in\Co_{k} \colon \{x,y\}\in\mathscr G_{k\ut}\,\text{ and }\,\U^{x,y}\cap J_{k}=\emptyset\},\]
that is, on top of not recovering, connectors in $\Co_{k,x}$ are connected to $x$ and do not update their connection throughout the interval $[(k-2)\ut,(k+2)\ut)$. The next result shows that for a very large amount of such times, a fixed $x$ will have enough stable neighbours.
\medskip

\begin{proposition}\label{uboundCk}	There is $\cc_1>0$ independent of $a$, $\lambda$ and $N$ such that for any fixed $x\in\St$ and $N$ large,
	\[\P\big( |\Co_{k,x}|>\cc_1p(a,1)\text{ for all }k\leq e^{\cc_1p(a,1)}\big)\,\geq\,1-e^{-\cc_1p(a,1)}.\]
\end{proposition}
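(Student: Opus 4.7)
The plan is to condition on a realisation of the recovery data $\{\Rec^y\}_{y\in\Co^0}$ on which the bound $|\Co_k|\ge\cc_0 N$ from~\eqref{localstable} holds for every $k\le e^{\cc_0 N}$, which has probability at least $1-e^{-\cc_0 N}$. With this data fixed, the events $\{y\in\Co_{k,x}\}_{y\in\Co_k}$ depend only on the update processes $\U^{x,y}$ and Bernoulli connection variables $\mathcal C^{x,y}$, which are independent across different $y$. Hence, conditionally on the recovery data, these events will be independent across $y$, and a Chernoff bound for each $k$ followed by a union bound over $k$ should yield the claim.

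The first key step is the computation of $\P(y\in\Co_{k,x}\mid y\in\Co_k)$. Conditionally on $\U^{x,y}\cap J_k=\emptyset$, the edge state is constant on $J_k$ and equal to the Bernoulli value $C^{x,y}_n$ drawn at the most recent update preceding $(k-2)\ut$, which is independent of the update Poisson process. This yields the factorisation
\[
\P\big(y\in\Co_{k,x}\mid y\in\Co_k\big)\;=\;e^{-4\ut\kappa_{x,y}}\,\frac{p(a,1)}{N}.
\]
Using the definition of $\ut$ together with $N/x\le 2/a$ for $x\in\St$ and $N/y\le 2$ for $y\in\Co^0$, a short computation shows that $\ut\,\kappa_{x,y}$ is bounded above by a constant depending only on $\varkappa$, uniformly in the sign of $\eta$ and in $a$. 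Hence $e^{-4\ut\kappa_{x,y}}\ge c$ for some $c=c(\varkappa)>0$, and so, on the fixed recovery data, $|\Co_{k,x}|$ stochastically dominates a binomial variable with parameters $\lfloor\cc_0 N\rfloor$ and $c\,p(a,1)/N$, whose mean is at least $\mu:=c\cc_0\,p(a,1)$.

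The second step is a standard multiplicative Chernoff bound, which gives $\P(|\Co_{k,x}|\le\mu/2)\le e^{-\mu/8}$ for every $k$. Choosing $\cc_1>0$ small enough that $2\cc_1\le c\cc_0/8$ and $\cc_1\le c\cc_0/2$, a union bound over $k\in\{0,\ldots,\lfloor e^{\cc_1 p(a,1)}\rfloor\}$ produces a total failure probability bounded by $e^{\cc_1 p(a,1)-\mu/8}\le e^{-\cc_1 p(a,1)}$, as required. The overlap of the intervals $J_k$ introduces dependencies among the events for different $k$, but this is immaterial since the union bound does not require independence. The most delicate point is the factorisation formula for $\P(y\in\Co_{k,x}\mid y\in\Co_k)$, which hinges on the conditional-independence structure of the edge-update Markov chain; everything else is routine concentration, with $p(a,1)$ --- of the order of a star's expected degree --- naturally supplying the deviation budget for both the Chernoff estimate and the union bound.
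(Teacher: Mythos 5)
Your argument is correct and follows essentially the same route as the paper: working on a realisation of the $\Rec^y$ processes where $|\Co_k|\geq \cc_0 N$, bounding $\P(y\in\Co_{k,x})\gtrsim p(a,1)/N$ via the independence of the update times and the Bernoulli connection variables together with the uniform bound on $\ut\,\kappa_{x,y}$, and then concluding by a Chernoff estimate and a union bound over $k\leq e^{\cc_1 p(a,1)}$. The only (immaterial) differences are in the explicit constants and in making the conditioning on the recovery data and the factorisation $e^{-4\ut\kappa_{x,y}}\,p(a,1)/N$ more explicit than the paper does.
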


\smallskip

\begin{proof}
	Fix any $k\leq\lfloor e^{\cc_0 N}\rfloor$ where $\cc_0$ is as in \eqref{localstable}, and observe that for each $y\in\Co^0$, the random variable $|\U^{x,y}\cap J_k|$ is Poisson distributed with mean $4\ut(\kappa_x+\kappa_y)$, which from our definition of $\ut$ is bounded by $4$ in the case $\eta>0$, and is bounded by $4+8\varkappa $ if $\eta\leq 0$ since $\kappa\leq \varkappa $ in that regime. From stationarity we also have that $\{x,y\}\in\mathscr G_{k\ut}$ with probability $\frac{1}{N}p(a,1)$ independently from $|\U^{x,y}\cap J_k|$, and hence there is $\cc'_1$ independent of $a$, $\lambda$ and $N$, such that
	\[\P(y\in \Co_{k,x})\,\geq\,\sfrac{\cc'_1}{N}p(a,1).\]
	Notice that the events \smash{$\{y\in\Co_{k,x}\}_{y\in\Co^0}$} rely on disjoint processes and hence are independent, and since there are at least $\cc_0N$ connectors in $\Co_k$ for each $k\leq\lfloor e^{\cc_0 N}\rfloor$, a concentration argument gives, for $\cc_1=\cc_0\cc'_1/10$,
	\[\P\big(|\Co_{k,x}|>\cc_1p(a,1)\big)\,\geq\,1-e^{-2\cc_1p(a,1)}.\]
	 The result follows by taking a union bound and using that$e^{\cc_1p(a,1)}< e^{\cc_0 N}$ for $N$ large.
\end{proof}

\medskip

For any fixed star $x\in\St$ we use its stable neighbours to construct a modified version of the contact process as follows.
\begin{definition}\label{modx}
	Fix $x\in\St$. For any realisation of the graphical construction, the process $(\bar{X}^x_t)$, is defined analogous to $(X_t)$ with the only exception that:
	\begin{itemize}
		\item $\bar{X}^x_0\equiv X_0$ on $\Co_{0,x}\cup\{x\}$ and $\bar{X}^x_0\equiv0$ everywhere else.
		\item An infection event $t\in\I^{x,y}$ is only valid if $y\in\Co_{k,x}$ for some $k$ such that $t\in J_k$.
	\end{itemize}
\end{definition}
\smallskip

The processes $\bar{X}^x_t$ are introduced here as a way to study the local behaviour of $X_t$ around each star and its neighbouring connectors more easily, without the noise introduced by other stars and fortuitous events which are irrelevant in the long run. The final ingredient in our construction which will come up many times is the concept of \textit{infected stable neighbours}, which, as the name suggests corresponds to
\[\Co'_{k,x}\,:=\,\{y\in\Co_{k,x} \colon \bar{X}^x_{k\ut}(y)=1\},\]
which are the stable neighbours of $x$ that are infected at time $k\ut$. The following key lemma states in a precise way that using these constructions the total amount of time $x$ is healthy on an interval of length $\ut$, which is $\int_{k\ut}^{k\ut+\ut}(1-\bar{X}^x_s(x))ds$, must be small whenever $|\Co'_{k,x}|$ is sufficiently large.
\smallskip

\begin{lemma}\label{lemmakeylocal}
For any $0<s<t$ and $x\in\St$ define \smash{$\mathfrak{F}_{s,t,x}^0$} as the $\sigma$-algebra generated by the graphical construction up to time $s$, and the $\Rec^y$, $\U^{x,y}$ and $\mathcal{C}^{x,y}$ processes with $y\in\Co^0$ up to time $t$. There is a universal constant $C>0$ independent of $a$, $\lambda$ and $N$, such that for any $M\in\N$ with $W:=\lambda \ut M>C$ and any $k\in\N$, we have
	\begin{equation}\label{keylocal}\P\bigg(\int_{k\ut}^{k\ut+\ut}(1-\bar{X}^x_s(x))ds>\sfrac{\ut}{2}\,\bigg|\;\mathfrak{F}_{k\ut,(k+2)\ut,x}^0\;\bigg)\one_{\{|\Co'_{k,x}|\geq M\}}\,\leq\,Ce^{-W/4}.\end{equation}
The event on the left can be taken to depend only on the $\Rec^x$ and $\I^{x,y}$ processes on $[\ut,k\ut+\ut]$ with $y\in\Co'_{k,x}$.
\end{lemma}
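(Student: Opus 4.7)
\medskip

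\noindent\emph{Proof plan.} My plan is, after conditioning on $\mathfrak{F}_{k\ut,(k+2)\ut,x}^0$, to replace $\bar{X}^x_\cdot(x)$ on $[k\ut,k\ut+\ut]$ by a dominating two-state process that receives only the recoveries of $x$ and the infection attempts issued from $\Co'_{k,x}$, and then to prove the tail bound by an exponential Chernoff estimate on a compound Poisson sum. This simultaneously yields the tail bound and the measurability statement at the end of the lemma.

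\medskip

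\noindent\emph{Step 1: reduction.} Conditionally on $\mathfrak{F}_{k\ut,(k+2)\ut,x}^0$, the initial value $\bar{X}^x_{k\ut}(x)$ and the set $\Co'_{k,x}$ are determined. The key observation, immediate from the definition of $\Co_{k,x}$, is that every $y\in\Co'_{k,x}$ is both infected and connected to $x$ throughout $J_k\supset[k\ut,k\ut+\ut]$, since such $y$ does not recover and the edge $\{x,y\}$ does not update on $J_k$. Consequently, on $[k\ut,k\ut+\ut]$ each such $y$ attempts to infect $x$ via the Poisson process $\I^{x,y}$ of rate $\lambda$. Neither these processes nor $\Rec^x$ on this interval appear among the variables generating $\mathfrak{F}_{k\ut,(k+2)\ut,x}^0$, so they are independent of the $\sigma$-algebra and mutually independent by the graphical construction. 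I then introduce an auxiliary process $\widetilde Y$ on $\{0,1\}$ started at $\widetilde Y_{k\ut}=\bar{X}^x_{k\ut}(x)$, with recoveries from $\Rec^x\cap[k\ut,k\ut+\ut]$ and infections from the superposition $\bigcup_{y\in\Co'_{k,x}}\I^{x,y}\cap[k\ut,k\ut+\ut]$, a Poisson process of rate $\lambda':=\lambda|\Co'_{k,x}|\ge\lambda M$ independent of $\Rec^x$. Ignoring all other sources of infection only shortens infected periods, so $\widetilde Y_s\le \bar{X}^x_s(x)$ and hence
$$\int_{k\ut}^{k\ut+\ut}(1-\bar{X}^x_s(x))\,ds\;\le\;\widetilde T\,:=\,\int_{k\ut}^{k\ut+\ut}(1-\widetilde Y_s)\,ds.$$
Since $\widetilde T$ is built only from $\Rec^x$ and $\{\I^{x,y}\}_{y\in\Co'_{k,x}}$ on $[k\ut,k\ut+\ut]$, this also delivers the measurability claim.

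\medskip

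\noindent\emph{Step 2: Chernoff estimate.} Each maximal healthy interval of $\widetilde Y$ starts either at $k\ut$ (if $\widetilde Y_{k\ut}=0$) or at a point of $\Rec^x\cap[k\ut,k\ut+\ut]$ and, by the memoryless property of the infection superposition, has length stochastically dominated by an independent $\mathrm{Exp}(\lambda')$ variable. Setting $R=|\Rec^x\cap[k\ut,k\ut+\ut]|\sim\mathrm{Poi}(\ut)$ and taking $(\tau_i)_{i\ge 0}$ i.i.d.\ $\mathrm{Exp}(\lambda')$ independent of $R$, this shows $\widetilde T$ is stochastically dominated by $\sum_{i=0}^{R}\tau_i$. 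For any $t\in(0,\lambda')$ the usual generating-function identity then gives
$$\mathbb E\Big[\exp\Big(t\sum_{i=0}^{R}\tau_i\Big)\Big]\;=\;\frac{\lambda'}{\lambda'-t}\,\exp\!\Big(\ut\cdot\frac{t}{\lambda'-t}\Big).$$
Choosing $t=\lambda'/2$ and applying Markov with $\lambda'\ut\ge W$ and $\ut\le 1$ yields
$$\P(\widetilde T>\ut/2)\;\le\;2e^{\ut}\,e^{-\lambda'\ut/4}\;\le\;2e\cdot e^{-W/4},$$
so that the constant $C=2e$ suffices (the hypothesis $W>C$ being used only to make the bound informative).

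\medskip

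\noindent\emph{Main obstacle.} There is no real analytic difficulty. The one point requiring some care is Step 1, namely checking that the Poisson components driving $\widetilde Y$ are genuinely independent of each other and of $\mathfrak{F}_{k\ut,(k+2)\ut,x}^0$, and that once the conditioning fixes $\Co'_{k,x}$ with $|\Co'_{k,x}|\ge M$ the vertices in this set remain infected and connected to $x$ throughout $[k\ut,k\ut+\ut]$. This is ultimately a bookkeeping exercise on the graphical representation, since neither $\Rec^x$ nor $\I^{x,y}$ for $y\in\Co^0$ enters the definition of $\mathfrak{F}_{k\ut,(k+2)\ut,x}^0$.
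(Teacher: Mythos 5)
Your argument is correct, and its first step (dominating the healthy time of $x$ on $[k\ut,k\ut+\ut]$ by the waiting times, after each recovery in $\Rec^x$, for the next attempt from the superposition of the $\I^{x,y}$ with $y\in\Co'_{k,x}$, which stay infected and connected throughout $J_k$) is exactly the reduction used in the paper; the paper realises your ``independent $\mathrm{Exp}(\lambda')$'' domination concretely by padding each recovery interval with an auxiliary exponential $E_i$, so that the waiting times become genuinely i.i.d.\ exponentials of rate $\lambda M$ independent of the Poisson$(\ut)$ number of recoveries — you should spell out this coupling (strong Markov for the superposition, conditionally on $\Rec^x$) with the same care, since the lengths of successive healthy stretches are not independent without it. Where you genuinely diverge is the tail estimate: the paper conditions on the number $n$ of recoveries, applies a Chernoff bound for the $\mathrm{Gamma}(n,\lambda M)$ sum, and then sums these bounds against the Poisson$(\ut)$ weights via Stirling-type estimates, which costs a factor $e^{2\sqrt{W\ut}}$ and is why the hypothesis $W>C$ with $C$ large is needed there; you instead compute the moment generating function of the full compound sum $\sum_{i=0}^{R}\tau_i$ in closed form and apply Markov at $t=\lambda'/2$, which gives $\P(\widetilde T>\ut/2)\le 2e^{\ut}e^{-\lambda'\ut/4}\le 2e\,e^{-W/4}$ directly, with an explicit constant and without any largeness requirement on $W$. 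Your route is thus shorter and quantitatively cleaner, while the paper's route is more pedestrian but makes the i.i.d.\ structure of the waiting times completely explicit; both yield the bound $Ce^{-W/4}$ and the same measurability statement, since your dominating event $\{\widetilde T>\ut/2\}$ depends only on $\Rec^x$ and the $\I^{x,y}$, $y\in\Co'_{k,x}$, on $[k\ut,k\ut+\ut]$.
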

%

\begin{proof}
	Fix first a realisation of $\Rec^x$ on the interval $[k\ut,k\ut+\ut]$ and call $r_1,r_2,\ldots,r_{n}$ its ordered sequence of recovery times (if there are none, then the result follows trivially), adding $r_0=k\ut$. To each $r_i$ we associate a waiting time~$s_i$ where
	\[s_i\,:=\,\left\{\begin{array}{cc}\inf\bigcup_{y\in\Co'_{t,x}}\I_0^{x,y}\cap[r_i,r_{i+1}]&\text{ if the set is non empty}\\[7pt](r_{i+1}-r_i)+E_i&\text{ otherwise, }\end{array}\right.\]
	where the $E_i$ are i.i.d. auxiliary random variables which follow an exponential distribution with rate $\lambda M$. In words, to each interval $[r_i,r_{i+1}]$ we associate the waiting time between the recovery $r_i$ and the first infection event coming from $\Co'_{k,x}$, if any, and $r_i+E_i$ otherwise. Since $x$ can only be healthy between a recovery event and the next infection event, we have
		\[\int_{k\ut}^{k\ut+\ut}(1-\bar{X}^x_s(x))ds\,\leq\,\sum_{i=1}^ns_i,\]
	where the quantity on the left will be strictly smaller in general, since we are neglecting infections between $x$ and connectors not in $\Co'_{k,x}$. The reason we add the auxiliary variables~$E_i$ is just to be able to handle this sum easily, since it can be checked that the resulting variables $s_i$ are i.i.d. following an exponential distribution with rate $\lambda M$ and hence $\sum_{i=1}^ns_i\sim\text{Gamma}(n,\lambda M)$. Now, as soon as $n<W$ we have as a result of a Chernoff bound for gamma random variables, that
	\[\P\Big(\sum_{i=1}^ns_i>\sfrac{\ut}{2}\;\Big|\mathfrak{F}_{k\ut,(k+2)\ut,x}^0,\,|\Rec^x\cap[k\ut,k\ut+\ut]|=n\Big)\one_{\{|\Co'_{k,x}|\geq M\}}\,\leq\,\left(\frac{eW}{n}\right)^ne^{-W/2}.\]
	Recalling that $|\Rec^x\cap[k\ut,k\ut+\ut]|$ is a Poisson random variable with parameter $\ut$, the left-hand side of \eqref{keylocal} is bounded from above by
	\begin{equation}\label{keylocaleq1}\sum_{n=0}^{\lfloor W\rfloor-1}e^{-W/2}\left(\frac{eW}{n}\right)^n\frac{\ut^n}{n!}e^{-\ut}\,+\,\P\big(\text{Poiss}(\ut)\geq W\big).\end{equation}
	For the first term, observe that from the Stirling bound $(n-1)!e^{n-1}\leq n^n$ each summand is smaller than \smash{exp$(-\frac{W}{2}-\ut+1)a_n$}, where $a_n:={(W\ut)^n}/{((n-1)!)^2}$. In particular we have $\frac{a_{n+1}}{a_n}=\frac{W\ut}{n^2}$
	and hence $a_n$ has a global maximum at $n_0=\lceil\sqrt{W\ut}\rceil$ for which, using again Stirling type bounds and the fact that $W\ut\leq n_0^2$, we have
	\[a_{n_0}\,\leq\,e^{2n_0-2}(n_0-1)^2\left(1-\tfrac{1}{n_0}\right)^{-2n_0}\,=\,O\left(e^{2\sqrt{W\ut}}\right).\]
	Since $\ut$ is bounded by $1$, and we are assuming that $W$ is large, it follows that the first term in \eqref{keylocaleq1} is of order $O(e^{-W/4})$. The second term in \eqref{keylocaleq1}, using $\ut\leq 1$ and a Chernoff type bound, is of order $O(e^{-W})$ and hence we obtain \eqref{keylocal}. The last statement of the proposition follows as the $s_i$ do not depend on infections between $x$ and connectors outside of $\Co'_{t,x}$.
\end{proof}

\smallskip

Lemma~\ref{lemmakeylocal} states that stars connected to sufficiently many infected neighbours will spend a significant portion of time infected. Intuitively, any such star acts as a beacon for the infection, thus infecting many of its neighbours and hence creating the \textit{survival loop} that will last for a long time. To formally state this idea we introduce the event $\mathcal{W}_k(x)$ as
\begin{equation}\label{defW}\mathcal{W}_k(x)\,=\,\Big\{|\Co'_{k,x}|\geq\delta \lambda p(a,1)\ut\,\text{ and  }\int_{k\ut}^{k\ut+\ut}(1-\bar{X}^x_s(x))ds\leq\ut/2\Big\},\end{equation}
with $\delta>0$ some constant independent of $a$, $\lambda$ and $N$, to be chosen later. In words, on $\mathcal{W}_k(x)$ the central node $x$ is infected at least half of the time throughout $[k\ut,k\ut+\ut]$ while holding a reservoir of infected neighbours of order at least $\lambda \delta p(a,1)\ut$. For simplicity we will simply write $\mathcal{W}_k$ throughout this section since the central star $x$ will be clear from context.%
\smallskip%

\begin{proposition}[\textbf{Local survival}]\label{survivaledge}
	Fix $x\in\St$ and let $\mathcal{S}$ be the event and $\cc_1$ be the constant in  Proposition \ref{uboundCk}. For any \smash{$k\leq e^{\cc_1p(a,1)/2}$} there is a large universal constant~$C$ such that if 
	\begin{equation}\lambda^2\ut^2p(a,1)>C,
	\label{timescaledef}
	\end{equation}
 then taking $\delta=\frac{\cc_1}{8}$, defining $\bar{k}=\lfloor e^{\delta\lambda^2\ut^2p(a,1)/4}\rfloor $  we have
	\[\P\bigg(\bigcap_{i=k}^{k+\bar{k}}\mathcal{W}_i\;\bigg|\;\mathfrak{F}_{k\ut,(k+\bar{k})\ut,x}^0\,\bigg)\;\geq\;1-Ce^{-\delta\lambda^2\ut^2p(a,1)/4},\]
on the event $\mathcal{S}\cap\{|\Co'_{k,x}|>\delta\lambda p(a,1)\ut\}$, where \smash{$\mathfrak{F}_{k\ut,(k+\bar{k})\ut,x}^0$} is as in Lemma \ref{lemmakeylocal}. Conditional on this $\sigma$-algebra the lower bound is taken to depend only on the $\I^{x,y}$ and $\Rec^{x}$ processes over $[k\ut,(k+\bar{k}\ut)]$.
\end{proposition}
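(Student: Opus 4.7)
The proof will proceed by induction on $j\in\{0,\ldots,\bar k\}$, showing that the event $\mathcal W_{k+j}$ is inherited from $\mathcal W_{k+j-1}$ with a small conditional error. Writing $W:=\delta\lambda^2\ut^2p(a,1)$ for brevity, the goal is to produce a per-step failure bound small enough that a union bound over $\bar k\leq e^{W/4}$ iterations yields the stated total error $Ce^{-W/4}$.

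The base case is immediate: on the initial event $\{|\Co'_{k,x}|\geq\delta\lambda p(a,1)\ut\}$, Lemma~\ref{lemmakeylocal} applied with $M=\delta\lambda p(a,1)\ut$ (so that the lemma's $W$ coincides with ours) supplies the integral-of-healthy-time condition in $\mathcal W_k$. For the inductive step, assume $\mathcal W_{k+j}$ holds. Two properties of $\mathcal W_{k+j+1}$ need to be verified:
\begin{itemize}
\item[(i)] the reservoir bound $|\Co'_{k+j+1,x}|\geq\delta\lambda p(a,1)\ut$;
\item[(ii)] the integral-of-healthy-time bound on $[(k+j+1)\ut,(k+j+2)\ut]$.
\end{itemize}
Once (i) is established, (ii) follows from a second application of Lemma~\ref{lemmakeylocal}, so the core of the argument is (i).

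For (i), I will use that each $y\in\Co_{k+j+1,x}$ has the edge $\{x,y\}$ present throughout $J_{k+j+1}\supset[(k+j)\ut,(k+j+1)\ut]$ and that $y$ does not recover during $J_{k+j+1}$. By $\mathcal W_{k+j}$, the star $x$ is infected for at least time $\ut/2$ during $[(k+j)\ut,(k+j+1)\ut]$. Conditioning on the trajectory of $\bar X^x_\cdot(x)$ over that interval, the probability that at least one infection attempt from the Poisson$(\lambda)$ process $\I^{x,y}$ occurs while $x$ is infected is at least $1-e^{-\lambda\ut/2}\geq\lambda\ut/4$ (using $\lambda\ut\leq 1$); such an event places $y$ in $\Co'_{k+j+1,x}$ since $y$ does not recover thereafter. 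By Proposition~\ref{uboundCk}, on $\mathcal S$ we have $|\Co_{k+j+1,x}|\geq\cc_1p(a,1)=8\delta p(a,1)$, so the conditional expectation of $|\Co'_{k+j+1,x}|$ is at least $2\delta\lambda\ut p(a,1)$, a factor of two above the required threshold. A multiplicative Chernoff bound applied to the conditionally independent indicators $\one\{y\in\Co'_{k+j+1,x}\}_{y\in\Co_{k+j+1,x}}$ then delivers (i) outside an exceptional set.

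The main obstacle will be calibrating the per-step error bounds so that, after summing over $\bar k\leq e^{W/4}$ iterations, they do not exceed $Ce^{-W/4}$; this requires squeezing a per-step failure probability of order $e^{-W/2}$ out of each propagation step. The choice $\delta=\cc_1/8$ is made precisely so that the Chernoff mean exceeds the threshold by a clean factor of two, and the joint conditions $\lambda\ut\leq 1$ and $W>C$ provide the headroom for the multiplicative deviation bound to reach the required exponential rate. A secondary subtlety is that the indicators $\one\{y\in\Co'_{k+j+1,x}\}$ are not unconditionally independent, since they all depend on the trajectory of $\bar X^x_\cdot(x)$; this will be handled by conditioning on that trajectory, leaving only the Poisson processes $\I^{x,y}$ for distinct $y$ as genuinely independent randomness, and by the fact that including more infected neighbours only helps, which permits a monotone comparison if needed.
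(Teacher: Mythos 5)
Your overall route is the same as the paper's: iterate over the intervals of length $\ut$, obtain the healthy-time half of $\mathcal W_i$ from Lemma~\ref{lemmakeylocal} with $M=\delta\lambda\ut p(a,1)$, propagate the reservoir by a binomial/Chernoff estimate using $|\Co_{i,x}|\geq\cc_1p(a,1)=8\delta p(a,1)$ on $\mathcal S$, and close with a union bound over the $\bar k$ steps. The genuine gap is in your independence argument for the propagation step. Conditioning on the trajectory of $\bar X^x_\cdot(x)$ does \emph{not} leave the processes $\I^{x,y}$ as fresh randomness: for $y$ in the current reservoir $\Co'_{k+j,x}$ the trajectory is generated precisely by $\Rec^x$ and these $\I^{x,y}$ (they are the reinfection events keeping $x$ occupied), so the claimed conditional lower bound $1-e^{-\lambda\ut/2}$ for such $y$ is circular, and the monotone-comparison remark does not by itself restore it. The paper avoids this with two devices you do not use: the final sentence of Lemma~\ref{lemmakeylocal} allows the healthy-time event to be chosen measurable with respect to $\Rec^x$ and the $\I^{x,y}$ with $y\in\Co'_{k,x}$ only, and the next reservoir is then handled by a case split --- either enough of $\Co'_{k,x}$ remains stable on $J_{k+1}$, in which case those connectors are automatically still infected (stable connectors do not recover) and no new randomness is needed, or else $|\Co_{k+1,x}\setminus\Co'_{k,x}|\geq\tfrac{\cc_1}{2}p(a,1)$, and for these \emph{fresh} connectors the $\I_0^{x,y}$ are genuinely untouched by the conditioning, so the per-connector bound $\lambda\ut/4$ and the Chernoff estimate are legitimate. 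Your uniform treatment of all $y\in\Co_{k+j+1,x}$ needs to be replaced by exactly this decomposition (the old-reservoir part being trivial, the new part being where the independence is available).

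The second problem is the one you flag but leave open. With $M=\delta\lambda\ut p(a,1)$ the per-step failure coming from Lemma~\ref{lemmakeylocal} is of order $e^{-W/4}$ with $W=\delta\lambda^2\ut^2p(a,1)$, and summing this over $\bar k\approx e^{W/4}$ steps yields $O(1)$, not $Ce^{-W/4}$. The headroom you cite (the factor $2$ in the Chernoff mean, $W>C$ large) does not address the bottleneck: the reservoir-propagation failure is of order $e^{-\lambda^2\ut^2p(a,1)}=e^{-W/\delta}$, which is indeed negligible, but the healthy-time term is pinned at $e^{-W/4}$ by your own choice of $M$. In the paper only the initial interval is charged $Ce^{-W/4}$; the subsequent steps, conditionally on the previous $\mathcal W$, are asserted to fail with probability of order $e^{-\delta\lambda^2\ut^2p(a,1)}$, and it is this strictly better per-step rate that makes the union bound over $\bar k=\lfloor e^{W/4}\rfloor$ steps produce the stated $Ce^{-W/4}$. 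To complete your plan you would have to extract a per-step rate of smaller order than $e^{-W/2}$, for instance by exploiting that the freshly built reservoir concentrates near $2\delta\lambda\ut p(a,1)$ rather than merely exceeding the threshold $\delta\lambda\ut p(a,1)$; as calibrated, your proposal cannot reach the claimed bound.
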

\pagebreak[3]

\begin{proof}
	Since the probability is taken to be conditioned on the event $\{|\Co'_{k,x}|>\delta\lambda p(a,1)\ut\}$ we can use Lemma~\ref{lemmakeylocal} with $M=\delta \lambda\ut p(a,1)$ (with $W=\lambda\ut M$ as large as needed) so that on $\mathcal{S}\cap\{|\Co'_{k,x}|>\delta\lambda p(a,1)\ut\}$ we have
	\begin{equation}\label{survivaledgeaux1}\P\Big(\int_{k\ut}^{k\ut+\ut}(1-\bar{X}^x_s(x))ds\leq\ut/2\;\bigg|\mathfrak{F}_{k\ut,(k+\bar{k})\ut,x}^0\,\Big)\;\geq\;1-C'e^{-\delta\lambda^2\ut^2p(a,1)/4},\end{equation}
	for some universal constant $C'$, where the event on the left can be taken to be independent of the $\mathcal{I}^{x,y}_0$ processes on $[k\ut,k\ut+\ut]$ with $y\notin\Co'_{k,x}$. This already shows that $\mathcal{W}_k$ has a lower bound of the desired form. In order to find a similar bound for $\mathcal{W}_{k+1}$ assume that the event in \eqref{survivaledgeaux1} holds. We need to control the probability of the event
	\[\{|\Co'_{k+1,x}|\geq\delta \lambda p(a,1)\ut\}\] for which there are only two possible scenarios:
	\smallskip
	\begin{enumerate}\setlength\itemsep{7pt}
		\item $|\Co'_{k,x}\cap\Co_{k+1,x}|>\delta\lambda p(a,1)\ut$, that is, sufficiently many infected neighbours of $x$ that are stable on $J_k$ are also stable on $J_{k+1}$ and hence we obtain directly $|\Co'_{k+1,x}|>\delta\lambda p(a,1)\ut$ (since stable connectors do not recover).
		
		\item $|\Co'_{k,x}\cap\Co_{k+1,x}|\leq\delta\lambda p(a,1)\ut$, in which case we have $|\Co_{k+1,x}\backslash\Co'_{k,x}|\geq\frac{\cc_1}{2}p(a,1)$ because $\cc_1p(a,1)\geq 2\delta\lambda p(a,1)\ut$ for sufficiently small $\lambda$. Since the event in \eqref{survivaledgeaux1} is independent of infection events between $x$ and connectors not in $\Co'_{k,x}$, for every connector $y$ outside of this set we are free to check whether it gets infected by $x$ in $[k\ut,(k+1)\ut]$. Since vertices in $\Co_{k+1,x}$ are connected to $x$ and do not recover, it is enough for $y\in\Co_{k+1,x}$ to belong to $\Co'_{k+1,x}$ that
		\[\mathcal{I}^{x,y}_0\,\cap\,\{s\in[k\ut,(k+1)\ut),\,\bar{X}_s^x(x)=1\}\,\neq\,\emptyset\]
		but this occurs with probability at least $\frac{\lambda\ut}{4}$ if $\lambda$ is small enough, since the set on the right has Lebesgue measure at least $\ut/2$. Observing that all these events are independent we conclude that $|\Co'_{k+1,x}|$ dominates a binomial random variable with parameters $\cc_1p(a,1)$ and $\lambda\ut/4$, whose mean is equal to $2\delta\lambda p(a,1)\ut$ from our definition of $\delta$. For $\lambda$ small this quantity is much larger than $8\lambda^2\ut^2p(a,1)$ and hence a concentration argument gives on $\mathcal{S}$ that
		\begin{equation}\label{auxprop4}\P\big(\big|\Co'_{k+1,x}\big|>\delta\lambda p(a,1)\ut\,\big|\,\mathfrak{F}_{k\ut,(k+\bar{k})\ut,x}^0,\mathcal{W}_k\big)\,\geq\,1-e^{-\lambda^2\ut^2p(a,1)}.\end{equation}
	\end{enumerate}
	Repeating the same argument as the one used to bound $\mathcal{W}_k$ we conclude that on $\mathcal{S}$ 
	\[\P\big(\mathcal{W}_{k+1}\,\big|\,\mathfrak{F}_{k\ut,(k+\bar{k})\ut,x}^0,\mathcal{W}_k\big)\geq 1-C'e^{-\delta \lambda^2\ut^2p(a,1)}\]
	for some universal constant $C'$. We can repeat this procedure to obtain
	\[\P\big(\mathcal{W}_{k+s+1}\,\big|\,\mathfrak{F}_{k\ut,(k+\bar{k})\ut,x}^0,\mathcal{W}_{k+s}\big)\geq 1-C'e^{-\delta \lambda^2\ut^2p(a,1)}\]
	on $\mathcal{S}$ for any $0<s<\bar{k}$ and hence the result follows from a union bound.
\end{proof}

\medskip

 In the next section we show how Proposition \ref{survivaledge} allows the infection to survive locally around stars, thus giving enough time to the infection for it to spread throughout the network. Note however that the previous result requires for stars to already have a reservoir of infected neighbours, a scenario that is rarely seen whenever the infection reaches a new star (from another star, or a connector not in $\Co^0$).  It is intuitive though, that with some probability bounded from below a newly infected star can kickstart its reservoir from scratch, for instance, by not recovering for a whole unit of time. Even though such a bound is enough for our purposes, we present a more thorough result which holds under some mild assumptions on the function $a(\lambda)$.

\smallskip

\begin{proposition}\label{lemmakeylocal2}
	Fix $x\in\St$ and let $\mathcal{S}$ the event in  Proposition \ref{uboundCk}, with $\cc_1$ the constant appearing there. There is $C'>0$ such that for any $s\leq e^{\cc_1p(a,1)/2}$ and any $\lambda$ and $a(\lambda)$ satisfying $\lambda^2\ut^2 p(a,1)\geq C'$ we have
	\begin{equation}\label{keylocal2}\P\Big(|\Co'_{k+2,x}|\leq \cc_1\lambda p(a,1)\ut/16\;\Big|\;\mathfrak{F}_{s,(k+3)\ut,x}^0\;\Big)\,\leq\,\sfrac{C'(\lambda\ut\log(\frac{\cc_1}{16}\lambda p(a,1))+ 1)}{\cc_1\lambda^2\ut p(a,1)},\end{equation}
	on the event $\mathcal{S}\cap\{\bar{X}_{s}^x(x)=1\}$, where $k=\lceil s/\ut\rceil$ and \smash{$\mathfrak{F}_{s,(k+3)\ut,x}^0$} is as in Lemma \ref{lemmakeylocal}. Also, the event in \eqref{keylocal2} can be taken to depend only on the processes $\Rec^x$ and $\I^{x,y}$  with $y\in\Co_{k+1,x}$.
\end{proposition}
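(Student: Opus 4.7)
The plan is to work in a stochastically dominated \emph{damaged} version of the infection, in which only infections $x\to y$ for $y\in\Co^0$ are allowed (never $y\to x$). This provides a lower bound for $|\Co'_{k+2,x}|$ while simplifying the dynamics: after $s$ the vertex $x$ stays infected until its first recovery $\tau_1$, with $\tau_1-s\sim\mathrm{Exp}(1)$. Throughout I condition on $\mathfrak F^0_{s,(k+3)\ut,x}$, under which $\Co_{k+1,x}$ is determined and, on the event $\mathcal S$, satisfies $|\Co_{k+1,x}|\ge \cc_1 p(a,1)$.

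For each $y\in\Co_{k+1,x}$ I split the requirement $y\in\Co'_{k+2,x}$ into two independent events. The \emph{stability-extension} event $E_y^{(1)}=\{\Rec^y\cap[(k+3)\ut,(k+4)\ut)=\emptyset\text{ and }\U^{x,y}\cap[(k+3)\ut,(k+4)\ut)=\emptyset\}$ upgrades $y\in\Co_{k+1,x}$ to $y\in\Co_{k+2,x}$, and the definition of $\ut$ gives $\P(E_y^{(1)}\mid \mathfrak F^0_{s,(k+3)\ut,x})\ge e^{-\ut(1+\kappa_x+\kappa_y)}\ge e^{-2}$. The \emph{infection} event $E_y^{(2)}$ is that some $t\in\I^{x,y}\cap[s,(k+2)\ut]$ satisfies $\bar X^x_{t-}(x)=1$. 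The two events rely on disjoint underlying Poisson processes restricted to disjoint time intervals, so they are conditionally independent and independent across $y$'s, and their intersection forces $y\in\Co'_{k+2,x}$ because $y\in\Co_{k+1,x}\cap\Co_{k+2,x}$ prevents $y$ from recovering on $[t,(k+2)\ut]$. Writing $\widehat N=|\{y\in\Co_{k+1,x}:E_y^{(2)}\text{ holds}\}|$, the variable $\widehat N$ depends only on $\Rec^x$ and $\{\I^{x,y}:y\in\Co_{k+1,x}\}$, and $|\Co'_{k+2,x}|$ stochastically dominates a $\mathrm{Binomial}(\widehat N,e^{-2})$-thinning whose randomness is orthogonal to that of $\widehat N$; a Chernoff estimate makes the failure of this thinning exponentially small in $\widehat N$, hence negligible compared with the target bound. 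It therefore suffices to estimate $\P(\widehat N\le CM\mid \mathfrak F^0_{s,(k+3)\ut,x})$ for $M=\cc_1\lambda p(a,1)\ut/16$ and a suitable constant $C$, and the event $\{\widehat N\le CM\}$ lies in $\sigma(\Rec^x,\I^{x,y}:y\in\Co_{k+1,x})$, as announced.

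To bound this probability, set $V=\min(\tau_1-s,(k+2)\ut-s)$: given $V$, $\widehat N$ is a binomial with parameters $|\Co_{k+1,x}|$ and $1-e^{-\lambda V}$. Splitting on whether $V$ exceeds a threshold $v_0$, we have $\P(V<v_0)\le v_0$, while on $\{V\ge v_0\}$ a Chernoff bound yields $\exp(-c|\Co_{k+1,x}|(1-e^{-\lambda v_0}))$. The choice $v_0=\ut/4$ produces the second summand of the stated bound: the first contribution is $\le\ut/4$, which by the hypothesis $\lambda^2\ut^2 p(a,1)\ge C'$ is at most $C''/(\cc_1\lambda^2\ut p(a,1))$, while the Chernoff contribution is exponentially small in $\cc_1\lambda p(a,1)\ut$. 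The logarithmic summand, which is useful in the regime where $\lambda p(a,1)$ is large, is more delicate and requires exploiting the reinfections of $x$ that the damaged coupling forbade. Working with the full dynamics on $[s,(k+2)\ut]$, I would track the non-decreasing Markov chain counting infected $y$'s in $\Co_{k+1,x}$ and argue that, conditional on the first $x\to y$ infection occurring before $x$'s first recovery (an event of probability $\ge 1-1/(\cc_1\lambda p(a,1))$), the chain subsequently grows geometrically at rate $|\Co_{k+1,x}|\lambda^2$ and reaches $M$ within the allowed time with overwhelming probability; a union bound over the $\Theta(\log M)$ successive levels of the chain supplies the missing logarithmic factor.

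The main obstacle I anticipate is the careful bookkeeping that keeps the limiting failure event (for both summands) inside $\sigma(\Rec^x,\I^{x,y}:y\in\Co_{k+1,x})$. The reinfections used for the logarithmic summand involve events from $\I^{y,x}=\I^{x,y}$ with $y\in\Co_{k+1,x}$, which is permitted, but one must avoid sampling from $\I^{x,z}$ for $z\notin\Co_{k+1,x}$ or from $\U^{x,y}$ after $(k+3)\ut$; the stability-extension thinning handles the latter randomness in a clean way. The hypothesis $\lambda^2\ut^2 p(a,1)\ge C'$ is used repeatedly to ensure that Chernoff-type error terms remain subleading and that the time needed for the geometric chain to reach size $M$ fits comfortably inside $[s,(k+2)\ut]$.
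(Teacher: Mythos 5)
There is a genuine gap at the quantitative core of your argument, namely the single-threshold split at $v_0=\ut/4$. The first contribution of that split is $\P(\tau_1-s<\ut/4)\approx \ut/4$, and your claim that this is at most $C''/(\cc_1\lambda^2\ut p(a,1))$ ``by the hypothesis $\lambda^2\ut^2p(a,1)\ge C'$'' goes in the wrong direction: the hypothesis is a \emph{lower} bound on $\lambda^2\ut^2 p(a,1)$, so it makes $\ut/4$ \emph{larger} than $1/(\lambda^2\ut p(a,1))$ by the factor $\lambda^2\ut^2p(a,1)/4$, which diverges in the regime where the proposition is applied (there $\lambda^2\ut^2p(a,1)\gtrsim|\log(\lambda a)|$). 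So your split only yields a bound of order $\ut$, strictly weaker than \eqref{keylocal2}. The paper avoids this by proving a multi-scale distributional bound: for every $\varphi$ with $q:=8\log(\frac{\cc_1}{16}\lambda p(a,1))/(\lambda p(a,1))<\varphi\le \ut$ one has $\P_s(|\Co'_{k+1,x}|\le \cc_1\lambda p(a,1)\varphi/8)\le\varphi$ (recovery of $x$ before $s+\varphi/2$ costs $\varphi/2$; otherwise a Binomial$(\cc_1 p(a,1),\tfrac14\lambda\varphi)$ Chernoff bound costs $\varphi/2$, using the lower bound on $\varphi$), i.e.\ a stochastic domination of $|\Co'_{k+1,x}|$ by a variable with an atom $q$ at $0$ and a density on $[q,\ut]$. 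This is then integrated against the exponential failure bound of Lemma~\ref{lemmakeylocal}, $Ce^{-\lambda\ut|\Co'_{k+1,x}|/4}$: the atom produces the term $\asymp\log(\frac{\cc_1}{16}\lambda p(a,1))/(\lambda p(a,1))$ and the integral $\int_q^{\ut}e^{-c\lambda^2\ut p(a,1)\varphi}\,d\varphi$ produces the term $\asymp 1/(\lambda^2\ut p(a,1))$; finally the \eqref{auxprop4}-argument from Proposition~\ref{survivaledge} converts ``$x$ infected at least half of the next time unit'' into the lower bound on $|\Co'_{k+2,x}|$.

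Your proposed repair of the missing regime (the geometric-growth chain exploiting reinfections of $x$) is only sketched and, as described, does not work: your damaged coupling discards exactly the feedback that Lemma~\ref{lemmakeylocal} exploits, and once $x$ has recovered with only $j=O(1)$ infected stable neighbours, reinfection of $x$ takes a time of order $1/(\lambda j)\ge 1/\lambda\gg\ut$, so the chain cannot ``reach $M$ within the allowed time with overwhelming probability'' from a small seed. The logarithmic factor in \eqref{keylocal2} is precisely the price of the event that $x$ recovers before building a reservoir of size $\gtrsim\log(\frac{\cc_1}{16}\lambda p(a,1))$; it is not an error term recoverable by a union bound over levels. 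In addition, your stability-extension events $E^{(1)}_y$ use $\Rec^y$ and $\U^{x,y}$ on $[(k+3)\ut,(k+4)\ut)$, which conflicts with the final measurability claim that the event depends only on $\Rec^x$ and $\I^{x,y}$, $y\in\Co_{k+1,x}$; the paper sidesteps this by bounding the event that $x$ fails to be infected at least half of the relevant unit interval (which does live in the allowed $\sigma$-algebra) and deferring the passage to $\Co'_{k+2,x}$ to the independent argument of \eqref{auxprop4}.
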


\begin{proof}
	In order to ease the notation denote $\P_s$ the conditional probability \smash{$\P(\cdot|\mathfrak{F}_{s,(k+3)\ut,x}^0)$} on the event $\mathcal{S}\cap\{\bar{X}_{s}^x(x)=1\}$. As a first step we claim that for any $$\tfrac{8\log(\frac{\cc_1}{16}\lambda p(a,1))}{\lambda p(a,1)}<\varphi\leq \ut$$ we have
	\begin{equation}\label{claimkeylocal2}\P_s\big(|\Co'_{k+1,x}|\leq \cc_1\lambda p(a,1)\varphi/8\big)\,\leq\,\varphi.\end{equation}
	We prove the claim by dividing the event within \eqref{claimkeylocal2} into the events \begin{align*}\mathcal{E}_1&:=\{|\Co'_{k+1,x}|\leq \cc_1\lambda p(a,1)\varphi/8,\,\Rec^x\cap[s,s+\varphi/2]\neq\emptyset\}\\[6pt]
	\mathcal{E}_2&:=\{|\Co'_{k+1,x}|\leq \cc_1\lambda p(a,1)\varphi/8,\,\Rec^x\cap[s,s+\varphi/2]=\emptyset\}\end{align*}
	and observe that $\P_s(\mathcal{E}_1)\leq\P(\Rec^x\cap[s,s+\varphi/2]\neq\emptyset)\leq 1-e^{-\varphi/2}\leq\varphi/2$. To bound $\P_s(\mathcal{E}_2)$ observe that on the event $\Rec^x\cap[s,s+\varphi/2]=\emptyset$, we have $\bar{X}_t^x(x)=1$ throughout $[s,s+\varphi/2]$, and hence a connector $y\in\Co_{k+1,x}$ is infected at time $(k+1)\ut$ as soon as $\I^{x,y}_0\cap[s,s+\varphi/2]$, which happens with probability $1-e^{-\lambda\varphi/2}\geq\frac{1}{4}\lambda\varphi$. Since all these events are independent and on $\mathcal{A}$ we have $|\Co_{k+1,x}|\geq\cc_1p(a,1) $, $|\Co'_{k+1,x}|$ dominates a binomial random variable with parameters \smash{$\frac{1}{4}\lambda\varphi$} and $\cc_1p(a,1)$ so we can use a Chernoff bound to deduce
	\[\P_s(\mathcal{E}_2)\leq e^{-\frac{1}{8}\cc_1p(a,1)\lambda\varphi}\leq\varphi/2,\]
	where the last inequality follows from the lower bound on $\varphi$. We conclude the claim by taking a union bound for $\mathcal{E}_1$ and $\mathcal{E}_2$. Hence we can stochastically bound ${8|\Co'_{k+1,x}|}/(\cc_1\lambda p(a,1))$ from below by a random variable $Y$ with $\P(Y=0)=q$, $\P(Y=\ut)=1-\ut$ and density $1$ over the interval $[q,\ut]$, where $q:={8\log(\frac{\cc_1}{16}\lambda p(a,1))}/(\lambda p(a,1))$.
	\smallskip
	
	\noindent Next, observe from Lemma \ref{lemmakeylocal} that there is some $C>0$ such that
	\[\P_s\bigg(\int_{k\ut}^{k\ut+\ut}(1-\bar{X}^x_s(x))ds>\sfrac{\ut}{2}\;\bigg|\;|\Co'_{k+1,x}|\bigg)\,\leq\,Ce^{-\lambda\ut |\Co'_{k+1,x}|/4}.\]
	as soon as  $\lambda\ut |\Co'_{k+1,x}|>C$. Taking expectation and using the previous claim we obtain
	\begin{align*}
	\P_s\bigg(\int_{k\ut}^{k\ut+\ut}(1-\bar{X}^x_s(x))ds>\sfrac{\ut}{2}\bigg)&\leq C\E_s\bigg(e^{-\frac{\cc_1\lambda^2\ut p(a,1)}{32} \cdot \frac{8|\Co'_{k+1,x}|}{\cc_1\lambda p(a,1)}}\bigg)\\[2pt]&\leq C\Big(q+\int_{q}^{\ut}e^{-\frac{\cc_1\lambda^2\ut p(a,1)}{32} \cdot s}ds+(1-\ut)e^{-\frac{\cc_1\lambda^2\ut^2 p(a,1)}{32}}\Big)\\[2pt]&\leq C\Big(q+\tfrac{32}{\cc_1\lambda^2\ut p(a,1)}+e^{-\frac{\cc_1\lambda^2\ut^2 p(a,1)}{32}}\Big)\\[2pt]
	&\leq\;\frac{C'(\lambda\ut\log(\frac{\cc_1}{16}\lambda p(a,1))+ 1)}{\cc_1\lambda^2\ut p(a,1)},
	\end{align*}
	where in the last inequality we have used that $\lambda^2\ut^2 p(a,1)$ is large and hence we can neglect the exponential term. Finally, the result follows by repeating the argument used in Proposition \ref{survivaledge} to deduce \eqref{auxprop4} under the assumption \smash{$\int_{k\ut}^{k\ut+\ut}(1-\bar{X}^x_s(x))ds>\sfrac{\ut}{2}$}.
\end{proof}

\subsection{Local survival} 

The following theorem establishes that under certain conditions, as soon as the set of stars is chosen such that there is local survival, the infection can spread on this set.
\begin{theorem}\label{theoslow}
	Let $\ut$ be as in Section \ref{localsurvival}. There is $M_{(iii)}>0$ depending on $\varkappa $ such that slow extinction and metastability for all $\lambda \in (0,1)$ hold if there is $a=a(\lambda) \in (0,1/2)$ satisfying
	\begin{equation}\label{localsurv}
	\lambda^2\ut^2p(a,1)>-M_{(iii)}\log(\lambda a).
	\end{equation}
	Moreover, under this assumption we have
	\begin{equation}
	\label{lowdensityetaposdelayed}
	\rho^-(\lambda)\;\geq\;c'\lambda a p(a,1),
	\end{equation}
	where $c'>0$ is a constant independent of $a$ and $\lambda$.
\end{theorem}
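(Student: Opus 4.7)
The strategy is to turn the single-star local survival result of Proposition~\ref{survivaledge} into a global spreading mechanism on $\St$, using the long local survival time as an effective clock on which star-to-star infections become plentiful. Call a star $x\in\St$ \emph{active} at time $k\ut$ if $|\Co'_{k,x}|\geq\delta\lambda p(a,1)\ut$, so that the hypothesis of Proposition~\ref{survivaledge} is satisfied. Under the assumption \eqref{localsurv} with $M_{(iii)}$ large enough, the local survival horizon
\[
T_{ls}\;:=\;\exp\!\big(\tfrac{\delta}{4}\lambda^2\ut^2 p(a,1)\big)
\]
dominates $(\lambda a)^{-cM_{(iii)}}$ and an active star remains active through $\bar{k}$ consecutive windows with probability at least $1-CT_{ls}^{-1}$, spending at least half of each window infected. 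Since initially all vertices are infected we have $\Co'_{0,x}=\Co_{0,x}$, and by Proposition~\ref{uboundCk} together with a union bound over the $|\St|=aN/4$ stars, every star is active at time $0$ with overwhelming probability.

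The next step is to show that active stars produce new active stars much faster than they can disappear. Over a single window an active star $x$ is infected for total Lebesgue time at least $\ut/2$, so by stationarity of the edge process, and by reserving the pool $\Co^1$ (disjoint from $\Co^0$) for spreading, the probability that during its entire activity interval $x$ infects another given star $x'$, either directly or via a two-step path through some $y\in\Co^1$, is bounded below by a constant multiple of
\[
\frac{\lambda T_{ls}\,p(a,a)}{N}\;+\;\frac{\lambda^2 T_{ls}\,p(a,1)^2}{N}.
\]
Summing over the remaining $aN/4-1$ stars and using $p(a,a)\geq p(a,1)$, the expected number of infection transmissions from $x$ during its activity is at least a constant multiple of $\lambda a\,T_{ls}\,p(a,1)$, which under~\eqref{localsurv} diverges as a power of $(\lambda a)^{-1}$. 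The bootstrap Proposition~\ref{lemmakeylocal2} then ensures that any freshly infected star becomes active within a bounded number of $\ut$-windows with probability bounded away from zero, so newly infected stars inherit a positive activity-success probability.

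Slow extinction then follows from the random-walk coupling used for Theorem~\ref{teolower}(i). Let $A_k\subseteq\St$ denote the set of active stars at time $k\ut$. Because distinct ordered pairs $(x,x')$ spread through disjoint edges in $\Co^1$, and because Proposition~\ref{survivaledge} is applied on a fixed realisation of the $\Rec^y$ processes for $y\in\Co^0$, the spreading contributions of different active stars are conditionally independent. Hence $|A_k|$ stochastically dominates a continuous-time random walk on $\{0,\ldots,|\St|\}$ with an upward drift as large as one wishes (by taking $M_{(iii)}$ large) and absorbed only at some fixed level $\rho_0 aN/4$. A concentration argument in the spirit of Proposition~\ref{catorbound} then gives that $|A_k|\geq \rho_0 aN/4$ for all $k\leq e^{cN}$ with probability $1-e^{-cN}$. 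The lower density bound~\eqref{lowdensityetaposdelayed} follows from Lemma~\ref{lemmalower} applied on this event: $|\{x\in\St\colon X_{k\ut}(x)=1\}|\geq \rho_0 aN/4$ and therefore
\[
\E\Big[\sum_{z\in\Vo^{odd}}X_{t+1}(z)\,\Big|\,\mathfrak{F}_t^{0,1}\Big]\;\geq\; C\lambda a\int_0^1 p(a,s)\,ds\;\geq\; C'\lambda a\,p(a,1),
\]
using~\eqref{condp}. Metastability follows from the self-duality argument sketched in Section~\ref{sec-metastab}.

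The main obstacle is dependency management across stars and spreading attempts. Different active stars draw their reservoirs from the same pool $\Co^0$, and connectors in $\Co^1$ may be reused by many pairs of stars. These issues are addressed by the partition $\Co=\Co^0\cup\Co^1$ so that local survival and spreading use disjoint edges, by conditioning on the $\Rec^y$-realisation for $y\in\Co^0$ to decouple the star-wise local survival events (as already done in Proposition~\ref{uboundCk}), and by checking that pairwise spreading events on $\Co^1$ remain conditionally independent given the star-side dynamics used in Proposition~\ref{survivaledge}.
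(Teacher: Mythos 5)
Your proposal follows the same broad architecture as the paper (local survival via Propositions~\ref{uboundCk}--\ref{lemmakeylocal2}, spreading through the reserved pool $\Co^1$, Lemma~\ref{lemmalower} for the density, duality for metastability), but two of its load-bearing claims are wrong as stated. First, you conflate probabilities that are small in $\lambda$ with probabilities that are exponentially small in $N$. The per-star events you rely on (the event $\mathcal{S}$ of Proposition~\ref{uboundCk}, and a star having a large initial reservoir) fail with probability of order $e^{-c\,p(a,1)}$, where $p(a,1)\asymp a^{-\gamma}$ does \emph{not} grow with $N$; hence ``every star is active at time $0$ with overwhelming probability'' by a union bound over the $aN/4$ stars is false, and a fortiori you cannot union-bound such events over the $e^{cN}$ periods needed for slow extinction. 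What is actually needed, and what the paper supplies through the sets $\St_m$ and Proposition~\ref{prop:skeletonstable}, is that with probability $1-e^{-cN}$ a positive \emph{fraction} of stars retains many stable neighbours throughout every period $L_m$; the whole induction is then run with fractions only. Without this uniform-in-time control of the pool of eligible (re-)activation targets, your random-walk domination has no basis for running for exponentially many steps.

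Second, the decoupling claim ``distinct ordered pairs $(x,x')$ spread through disjoint edges in $\Co^1$'' is false: pairs sharing a source or a target can use the same connector $y$ and then share the edge $\{x,y\}$ or $\{y,x'\}$, and spreading events towards different targets share the source-side infection processes $\I^{x,y}$, $y\in\Co^1$. So the asserted conditional independence of the spreading contributions of different active stars — and with it the stochastic domination of $|A_k|$ by a drifted walk together with $1-e^{-cN}$ concentration at every step — is unjustified as stated. This is precisely the delicate point; the paper resolves it in Proposition~\ref{keyspreading} by conditioning on the full source-side $\sigma$-algebra $\mathfrak{F}'_m$ (trajectories of the locally surviving stars and their edges to $\Co^1$), after which the events $\mathcal{E}^3_m(z)$ for different targets $z$ involve fresh randomness and a large-deviation bound applies, combined with the dichotomy $\mathcal{C}_m$/$\overline{\mathcal{C}}_m$ to avoid conditioning pathologies. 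A smaller but real gap: ``active'' at time $k\ut$ means a large reservoir $\Co'_{k,x}$, not that $x$ itself is infected at that instant, so the final application of Lemma~\ref{lemmalower} requires the paper's additional step showing that, with probability bounded below, a positive fraction of the stars satisfying $\mathcal{W}_k$ are actually infected at the prescribed time $t$ (no recovery on $[k\ut,t]$ plus one infection from the reservoir).
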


\begin{remark}
Theorem~\ref{theoslow} gives sharp lower bounds on the metastable exponent for the factor, preferential attachment and strong  kernel in the regime marked dark green in Figure~\ref{one}. More precisely, the choice $a(\lambda)=r \lambda^{2/\gamma(1-2\eta)} |\log \lambda|^{-(1-2\eta)/\gamma}$ for $\eta>0$, or $a(\lambda)=r \lambda^{2/\gamma} |\log \lambda|^{-1/\gamma}$ for $\eta\le 0$, with a small enough constant $r$, yields lower bounds of the order
\begin{equation}\label{LBMetastableDS}
\rho^-(\lambda)\gtrsim 
\left\{
\begin{array}{ll}
\lambda^{\frac {2\tau- 3-2\eta}{1-2\eta}}
\vert\log \lambda \vert^{-(\tau-2)(1-2\eta)} &\text{ if } \eta > 0, \\
\lambda^{2\tau- 3} \vert\log \lambda \vert^{-(\tau-2)} &\text{ if } \eta \le 0.
\end{array}
\right.
\end{equation}
The phase transition at $\eta=0$ 
originates from the change in behaviour of~$\ut$.
\end{remark}

To prove this result recall the definitions of $J_k$, $\Co_{k}$, $\Co_{k,x}$ and $\Co'_{k,x}$ given in Section \ref{localsurvival}, and define time periods $L_m$ of the form \smash{$L_m:=[\bar{k}\ut m,\bar{k}\ut(m+1))$} where \smash{$\bar{k}=\lfloor e^{\delta\lambda^2\ut^2p(a,1)/4}\rfloor$} is the amount of time units a star can survive locally according to Proposition~\ref{survivaledge}. Our proof relies on showing that if we find some fraction of the stars surviving locally within a period $L_m$, then with very large probability these stars will have enough time to repeatedly propagate the infection to new stars, which will in turn survive locally throughout $L_{m+1}$, thus creating a loop that gives slow extinction.
Throughout this proof we need the following definitions for all $m\geq 1$,
\begin{align*}\St_m&\,:=\,\left\{x\in\St \colon|\Co_{k,x}|>\cc_1p(a,1),\,\forall k \in \N\text{ with }k\ut\in L_{m-1}\cup L_m\right\}, \\[6pt]\tilde{\St}_m&\,:=\,\left\{x\in\St_m\colon |\Co'_{\bar{k}m,x}|\geq\delta \lambda p(a,1)\right\}, \\[6pt]\displaystyle\St_m'&\displaystyle\,:=\,\left\{x\in\tilde{\St}_m\colon \forall k \in \N\text{ with }k\ut\in L_m,\,\text{ we have }\mathcal{W}_k(x)\right\},\end{align*}
\noindent where $\cc_1$ is as in Proposition \ref{localstable} and $\delta$ is as in Proposition~\ref{survivaledge}. If $m=0$ then the definition of $\St_m$ changes only in that the $k\ut$ belong to $L_m$. In words, $\St_m$ are the stars which maintain a large amount of stable neighbours throughout $L_m\cup L_{m+1}$, while $\tilde{\St}_m$ are the stars among~$\St_m$ which also begin this time period with a large amount of stable neighbours infected. $\St'_m$ are the stars 
among~$\tilde{\St}_m$ that survive locally throughout $L_m$.\smallskip

Observe that a requirement that is common to all the results in Section \ref{localsurvival} is that stars have sufficiently many stable neighbours, so we need to guarantee existence of sufficiently many such stars at all times. This is the content of the following proposition.

\begin{proposition}\label{prop:skeletonstable}
Define $\mathfrak{F}$ the $\sigma$-algebra generated by the $\Rec^y$, $\U^{x,y}$ and $\mathcal{C}^{x,y}$ processes with $y\in\Co^0$ and $x\in\St$. There is some $\cc_2>0$(depending on $\lambda$ and $a$) such that defining \[\mathcal{A}\,:=\,\{|\St_m|>\tfrac{aN}{3},\,\,\forall m\leq e^{\cc_2N}\}\] then $\mathcal{A}$ is $\mathfrak{F}$-measurable and we have
$\P(\mathcal{A})>1-e^{-\cc_2N}$, for all sufficiently large $N$.\end{proposition}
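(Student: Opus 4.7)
The plan is to realise $\mathcal{A}$ as the intersection of the long-time validity of \eqref{localstable} together with, conditionally on that, an event forcing a large fraction of stars to belong to $\St_m$ for each $m$ in the relevant range. The $\mathfrak{F}$-measurability of $\mathcal{A}$ is immediate because $\St_m$ is determined by $\{\Rec^y : y\in\Co^0\}$ together with the update times $\U^{x,y}$ and the connection variables $\mathcal{C}^{x,y}$ for $x\in\St$, $y\in\Co^0$, all of which are built into $\mathfrak{F}$.

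First I would fix a realisation of $\{\Rec^y : y\in\Co^0\}$ on which $|\Co_k|>\cc_0 N$ for all $k\leq\lfloor e^{\cc_0 N}\rfloor$; by \eqref{localstable} this holds with probability at least $1-e^{-\cc_0 N}$. On such a realisation, for each fixed star $x$ and fixed time index $k$, the quantity $|\Co_{k,x}|$ is a sum of independent Bernoullis indexed by $y\in\Co_k$ (depending only on $\{\mathcal{C}^{x,y},\U^{x,y}\}$), with conditional mean bounded below by $\cc\, p(a,1)$ for some $\cc>0$, exactly as in the computation leading to Proposition~\ref{uboundCk}. A Chernoff bound (and reducing $\cc_1$ if necessary) then yields
\[
\P\bigl(|\Co_{k,x}|\le \cc_1 p(a,1) \,\big|\, \Rec\bigr)\;\le\;e^{-\cc' p(a,1)}
\]
for some $\cc'>0$. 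Membership $x\in\St_m$ requires the opposite inequality for the $2\bar{k}$ indices $k$ with $k\ut\in L_{m-1}\cup L_m$; since $\log\bar{k}\le \delta \lambda^2\ut^2 p(a,1)/4\ll \cc' p(a,1)$ once $\lambda\ut$ is small, a union bound gives
\[
\P(x\notin\St_m \mid \Rec)\;\le\;2\bar{k}\,e^{-\cc' p(a,1)}\;\le\;q
\]
for some $q<1/2$, uniformly in $m\le e^{\cc_2 N}$.

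The next step exploits conditional independence across stars: given $\{\Rec^y:y\in\Co^0\}$, the events $\{x\in\St_m\}_{x\in\St}$ depend on disjoint families of update times and connection variables (the $x$-column of edges to $\Co^0$), so they are mutually independent. Therefore $|\St_m|$ stochastically dominates a $\mathrm{Binomial}(|\St|,1-q)$, and a standard deviation estimate gives a constant $c_*\in(0,(1-q)/2)$ and $\cc''>0$ with
\[
\P\bigl(|\St_m|< c_*\, aN \,\big|\, \Rec\bigr)\;\le\;e^{-\cc'' aN},
\]
uniformly for $m\le e^{\cc_2 N}$. A final union bound over those $m$ combined with the bound on the conditioning event yields
\[
\P(\mathcal{A}^c)\;\le\;e^{-\cc_0 N}+e^{\cc_2 N}\,e^{-\cc'' aN}\;\le\;e^{-\cc_2 N},
\]
upon choosing $\cc_2$ smaller than both $\cc_0$ and $\cc'' a/2$ (so also adjusting $c_*$ to match the threshold in the statement). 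The main obstacle is the mismatch of time scales: Proposition~\ref{uboundCk} alone only secures stable neighbourhoods for a single star during a window exponential in $p(a,1)$, which is merely a constant in $N$, whereas we need control over a window exponential in $N$; the upgrade is achieved by using the per-time-point Chernoff bound in combination with independence across stars, and the price we pay is that $|\St_m|$ is only a definite fraction of $|\St|$ rather than almost all of it.
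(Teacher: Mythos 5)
Your proposal is correct and follows essentially the same route as the paper's proof: condition on the realisation of $\{\Rec^y\colon y\in\Co^0\}$ given by \eqref{localstable}, bound the per-star failure probability over the window $L_{m-1}\cup L_m$ by a Chernoff estimate plus a union bound over the $\sim 2\bar{k}$ time indices (exploiting that $\log\bar{k}\asymp\lambda^2\ut^2p(a,1)\ll p(a,1)$, which is exactly the paper's ``adaptation'' of Proposition~\ref{uboundCk}), use independence across stars, apply binomial concentration, and finish with a union bound over $m\le e^{\cc_2 N}$ with $\cc_2$ chosen small depending on $a$. Your only concession --- that the argument directly yields $|\St_m|\ge c_*aN$ for some definite fraction $c_*$ rather than the stated threshold --- is shared by the paper's own proof (a per-star success probability of $3/4$ on $|\St|=aN/4$ stars likewise does not literally give $aN/3$), so it is not a gap specific to your argument.
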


\begin{proof}
	Recall from \eqref{localstable} that there is $\cc_0>0$ 
	fixed such that \[\P(|\Co_{k}|> \cc_0N \, \forall k\in\{0,\ldots,\lfloor e^{\cc_0 N}\rfloor\})\,\geq\,1-e^{-\cc_0 N}\]
	Fix a realisation of the $\Rec^{y}$ processes with $y\in\Co^0$ such that this event holds and fix $m$ such that $(m+1)\bar{k}\leq e^{\cc_0N}$. Using that $p(a,1)\ll \lambda^2\ut^2p(a,1)$ for small $\lambda$ we can adapt Proposition \ref{uboundCk} so that for any $x\in\St$
	\[\P\big( |\Co_{k,x}|>\cc_1p(a,1)\text{ for all }k\ut\in L_{m-1}\cup L_m\big)\,\geq\,1-e^{-\cc_1p(a,1)}>\tfrac{3}{4}\]
	and these events are independent for different $x$. It follows from a large deviation argument that there is $0<\cc'_2<\cc_0$ such that
	\[\P\left(|\St_m|>\tfrac{aN}{3}\right)\,\geq\,1-e^{-\cc'_2 N}\]
	so taking a union bound we obtain
	\[\P\big(|\St_m|>\tfrac{aN}{3},\,\,\forall m\leq \tfrac{1}{\bar{k}}e^{\frac{\cc'_2N}{2}}\big)\,\geq\,1-e^{-\frac{\cc'_2 N}{2}}\]
	and as $\bar{k}$ does not depend on $N$ it is enough to take $\cc_2<\cc_2'/2$ and $N$ sufficiently large.
\end{proof}

\medskip

Since at time $0$ all vertices are infected it follows that, for each star $x$, we have $\Co'_{0,x}=\Co_{0,x}$. In particular \smash{$\St_0=\tilde{\St}_0$} because for each $x\in\St_0$ we have
\[|\Co'_{0,x}|=|\Co_{0,x}|\geq \cc_1p(a,1)\geq\delta\lambda p(a,1).\]
On the event $\mathcal{A}$ this gives $|\tilde{\St}_0|>\frac{aN}{3}$ which will be sufficient to kickstart the slow extinction loop. Fix now $m\in\N$ and define the $\sigma$-algebra $\mathfrak{F}_m$ 
generated by $\mathfrak{F}$ and by the $\Rec^x$ and $\mathcal{I}^{x,y}$ processes up to time $\bar{k}\ut m$, with $y\in\Co^0$ and $x\in\St$. Finally, for $0<\cc'_1<\frac{1}{3}$ some value to be fixed later, consider the event
\smash{$\mathcal{B}_m\,:=\,\{|\tilde{\St}_m|>\cc'_1aN\},$}
which is $\mathfrak{F}_m$-measurable. Our main goal will be to show that there exists $\cc_2'>0$ independent of $N$ (but which might depend on $a$ and $\lambda$) such that on $\mathcal{A}\cap\mathcal{B}_m$,
\begin{equation}\label{recursiveedge}
\P\big(\mathcal{B}_{m+1}\,\big|\,\mathfrak{F}_m\big)\,\geq\,1-e^{-\cc_2' N}.
\end{equation}
To do so we split our proof depending on whether the event \[\mathcal{C}_{m}:=\{|\tilde{\St}_{m}\cap\St_{m+1}|>2\cc_1'aN\}\]
(which is $\mathfrak{F}_m$-measurable) is satisfied or not.\smallskip

For the case in which $\mathcal{C}_m$ is satisfied observe first that applying an analogous version of Proposition \ref{survivaledge} on $L_m$ and our hypothesis on $\lambda^2\ut^2p(a,1)$ we obtain that for any fixed star $x$, on $\{x\in\tilde{\St}_m\}$ we have
\[\P\big(x\in\St'_{m}\,\big|\,\mathfrak{F}_{m}\big)\,\geq1-Ce^{-\delta\lambda^2\ut^2p(a,1)/4}\geq\sfrac{2}{3}\]
where the bound follows from the study the restricted process $\bar{X}^x$ so that the events \smash{$\{x\in\St'_{m}\}_{x\in\tilde{\St}_{m}}$} can be taken to be independent for the purposes of our computations. Now, on the event $\mathcal{C}_m$ we have 
$$|\tilde{\St}_{m}\cap\St_{m+1}|>2\cc_1'aN$$ and using a large deviation bound we deduce that there is some constant $\cc_2'>0$ independent of $N$ such that on $\mathcal{C}_m$,
\begin{equation}\label{eqauxedgespreading}\P\big(|\St_{m}'\cap\St_{m+1}|>\tfrac{6}{5}\cc_1'aN\,\big|\,\mathfrak{F}_{ m}\big)\,\geq\,1-e^{-\cc_2' N}.\end{equation}
In order for $x\in\St_m'\cap\St_{m+1}$ to belong to $\tilde{\St}_{m+1}$ we only require that \smash{$|\Co'_{\bar{k}(m+1),x}|\geq\delta \lambda p(a,1)$}, but as in the proof of Proposition~\ref{survivaledge}, we can show that this occurs with a large probability (say, larger than $\frac{5}{6}$) and hence it follows from a large deviation argument that on $\mathcal{C}_m$
\begin{equation}\label{eq:result1slow}\P\left(\mathcal{B}_{m+1}\,\big|\,\mathfrak{F}_m\right)\,\geq\,1-e^{-\cc_2' N}.\end{equation}
For the case in which $\mathcal{C}_m$ is not satisfied observe that we are still assuming that $\mathcal{B}_m$ holds so we can obtain an analogous version of \eqref{eqauxedgespreading}, that is,
\begin{equation}\label{eqauxedgespreading2}\P\big(|\St_{m}'|>\tfrac{3}{5}\cc_1'aN\,\big|\,\mathfrak{F}_{ m}\big)\,\geq\,1-e^{-\cc_2' N}\end{equation}
on $\mathcal{B}_m$. {Differently from the previous case, the role of the stars in $\St_{m}'$ is to propagate the infection to stars in $\tilde{\St}_{m}\cap\St_{m+1}$, where the choice of this set is natural when trying to avoid dependencies with the event $\mathcal{D}_m:=\{|\St_{m}'|>\frac{3}{5}\cc_1'aN\}$.} The following proposition states that if $\mathcal{D}_m$ is satisfied, then with a large probability the infection propagates to sufficiently many stars where it survives locally.
\begin{proposition}\label{keyspreading}
	Fix $m\in\N$ and let $\mathcal{A}$, $\mathcal{C}_m$ and $\mathcal{D}_m$ as before. Define $\mathfrak{F}'_m$ as the $\sigma$-algebra generated by $\mathfrak{F}_m$ and the $\Rec^x$ and $\mathcal{I}^{x,y}$ processes on $L_m$, with $y\in\Co^0$ and $x\in\tilde{\St}_m$. Then there is $\cc_2'>0$ independent of $N$ and $m$ such that on $\mathcal{A}\cap\overline{\mathcal{C}}_m\cap\mathcal{D}_m$
	\[\P\left(\mathcal{B}_{m+1}\,\big|\,\mathfrak{F}'_m\right)\,\geq\,1-e^{-\cc_2' N}.\]
	Moreover, this event depends only on the $\Rec^x$, $\Rec^y$, $\U^{x,y}$ and $\mathcal{I}_0^{x,y}$ processes on $L_m$, with $y\in\Co^1$ and \smash{$x\notin\tilde{\St}_m$.}
\end{proposition}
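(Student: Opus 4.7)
The plan is to find a large pool of fresh candidate stars in $\St_{m+1}\setminus\tilde{\St}_m$, show each lands in $\tilde{\St}_{m+1}$ with positive constant probability, and conclude by concentration. On $\mathcal{A}\cap\overline{\mathcal{C}}_m$ one has $|\St_{m+1}|\ge aN/3$ and $|\tilde{\St}_m\cap\St_{m+1}|\le 2\cc_1'aN$, so the set $\mathscr T:=\St_{m+1}\setminus\tilde{\St}_m$ has size at least $\alpha aN$ for some fixed $\alpha>0$, once $\cc_1'$ is chosen small enough.

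For a fixed $z\in\mathscr T$, condition on $\mathfrak{F}'_m$: the trajectory of the infection on $\tilde{\St}_m\cup\Co^0$ during $L_m$ is then frozen, and on $\mathcal{D}_m$ every $x^*\in\St'_m$ is infected for at least half of every length-$\ut$ subinterval of $L_m$. Following Proposition~\ref{catorbound2}, I would spread the infection to $z$ along a two-hop path $x^*\to y\to z$ through $\Co^1$: during an early subinterval of $L_m$, a $\Theta(\lambda\ut\, a\,p(a,1))$-fraction of $\Co^1$-connectors becomes infected via edges to $\St'_m$, and then each infected intermediary connected to $z$ passes the infection to $z$ with probability $\Theta(\lambda\ut)$ during a later subinterval. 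Summing over the $\Theta(p(a,1))$ candidate intermediaries of $z$, the two-stage probability is of order $\lambda^2\ut^2 a\,p(a,1)^2$, which is made large by the hypothesis $\lambda^2\ut^2 p(a,1)\gtrsim -\log(\lambda a)$ of Theorem~\ref{theoslow}. Once $z$ is infected at some time $t_0\in L_m$, Proposition~\ref{lemmakeylocal2} applied at $z$---legitimate because $z\in\St_{m+1}$ gives $|\Co_{k,z}|\ge\cc_1\,p(a,1)$ uniformly over $k$ with $k\ut\in L_m\cup L_{m+1}$---yields $|\Co'_{\lceil t_0/\ut\rceil+2,z}|\ge \delta\lambda p(a,1)$ with constant probability, after which Proposition~\ref{survivaledge}, iterated up to time $\bar{k}(m+1)\ut$, preserves this reservoir and places $z$ in $\tilde{\St}_{m+1}$.

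To upgrade the per-candidate constant probability $p_0$ to the required $1-e^{-\cc_2'N}$ bound, partition $\Co^1$ into $|\mathscr T|$ pairwise disjoint blocks of equal size---feasible because $|\Co^1|=N/8$ is much larger than $|\mathscr T|\le aN/4$---and restrict the two-hop paths for each $z\in\mathscr T$ to use intermediaries drawn from the block assigned to $z$. This renders the events $\{z\in\tilde{\St}_{m+1}\}_{z\in\mathscr T}$ conditionally independent given $\mathfrak{F}'_m$, and a binomial Chernoff bound gives $|\tilde{\St}_{m+1}|\ge\cc_1' aN$ with probability at least $1-e^{-\cc_2'N}$. The main obstacle is the per-candidate two-hop lower bound: it must hold uniformly over the arbitrary $\mathfrak{F}'_m$-schedule of $\St'_m$ and over the parameters $a$ and $\lambda$, and the bookkeeping must stay within the ``moreover'' processes (those with $y\in\Co^1$ and $x\notin\tilde{\St}_m$) to preserve the independence structure required for subsequent iterations of the argument.
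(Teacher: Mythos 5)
There is a genuine gap at the heart of your argument: the claim that the per-candidate two-hop probability, of order $\lambda^2\ut^2 a\,p(a,1)^2$, is ``made large by the hypothesis $\lambda^2\ut^2 p(a,1)\gtrsim-\log(\lambda a)$''. Writing $\lambda^2\ut^2 a\,p(a,1)^2=\bigl(\lambda^2\ut^2 p(a,1)\bigr)\cdot\bigl(a\,p(a,1)\bigr)$ and using $p(a,1)\asymp a^{-\gamma}$, the second factor is $\asymp a^{1-\gamma}\to 0$, while the hypothesis \eqref{localsurv} only makes the first factor logarithmically large; with the choices of $a(\lambda)$ relevant to the local survival phase (e.g.\ $a\asymp\lambda^{2/\gamma}$ up to logs for the factor kernel) the product is a positive power of $\lambda$ and tends to $0$. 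Indeed, if this quantity were bounded below one would essentially be in the quick indirect spreading regime (condition (ii) of Theorem~\ref{teolower}) and no local survival machinery would be needed. The paper's proof overcomes the polynomially small per-window probability by exploiting the length of $L_m$: since every $x\in\St'_m$ stays infected at least half of each length-$\ut$ subinterval of $L_m$, the two-hop attempt $x\to y\to z$ is made in each of the $\bar{k}=\lfloor e^{\delta\lambda^2\ut^2p(a,1)/4}\rfloor\geq(\lambda a)^{-M_{(iii)}\delta/4}$ windows, the attempts in different windows being independent because the event $\mathcal{E}^3_m(z,k)$ requires the edge $\{y,z\}$ to refresh inside the window; the hypothesis \eqref{localsurv} is calibrated precisely so that $\bar{k}$ beats the polynomial smallness $\lambda^2\ut^3\kappa\, a\,p^2(a,1)$ and yields $\P(\mathcal{E}^3_m(z))\geq\frac23$. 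This repetition over the stretched-exponential time scale is the essential idea of the local survival strategy and is absent from your proposal; a single ``early subinterval / later subinterval'' attempt cannot work.

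Your device for independence across candidates also backfires quantitatively. Partitioning $\Co^1$ into $|\mathscr T|\asymp aN$ private blocks leaves each $z$ with only $\Theta(1/a)$ admissible intermediaries; since a given intermediary is adjacent to $z$ only with probability $\Theta(p(a,1)/N)$, the per-window success probability for $z$ drops to order $\lambda^2\ut^2 p(a,1)^2/N$, and even after multiplying by $\bar{k}$ (which does not grow with $N$) it vanishes as $N\to\infty$, so no per-candidate probability bounded away from $0$ uniformly in $N$ survives. The paper instead lets all candidates share the full pool $\Co^2_k$ of stable, star-connected intermediaries, of size $\Theta(a\,p(a,1)N)$, and obtains the needed (conditional) independence across $z$ by enlarging $\mathfrak{F}'_m$ with the star-side randomness ($\Rec^y$, $\U^{x,y}$, $\mathcal{C}^{x,y}$ for $y\in\Co^1$, $x\in\St'_m$) and observing that the remaining requirements involve the $z$-specific processes $\U^{y,z}$, $\I_0^{y,z}$, $\Rec^z$. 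Your final step (Proposition~\ref{lemmakeylocal2} to seed the reservoir of the newly infected $z$, then Proposition~\ref{survivaledge} to carry it to time $\bar{k}(m+1)\ut$) does match the paper, but the two issues above mean the proposed per-candidate bound, and hence the concluding Chernoff step at level $1-e^{-\cc_2'N}$, does not go through as written.
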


\begin{proof}
Fix a realisation of the processes which generate $\mathfrak{F}'_m$ such that $\mathcal{A}$ and $\mathcal{D}_m$ are satisfied but $\mathcal{C}_m$ is not. Define for any given $k\in\N$ with $J_k\subseteq L_m$ the set
\[\Co^1_{k}\,:=\,\{y\in\Co^1\colon \Rec^y\cap[k\ut,(k+1)\ut)=\emptyset\},\]
where, by choice of $\ut$, each $y\in\Co^1$ belongs to $\Co^1_{k}$ independently with probability at least~$e^{-1}$, so using a large deviation argument we conclude that 
\smash{$\P(|\Co_k^1|>\tfrac{N}{18})\geq 1-e^{-N/18}$} and hence for $N$ large,
\begin{equation}\label{eq:stable1}\P\left(|\Co_k^1|>\tfrac{N}{18},\,\text{ for all }k\in\N, J_k\subseteq L_m\big|\,\mathfrak{F}'_m\right)\geq 1-e^{-N/18}.\end{equation}
Enlarge $\mathfrak{F}'_m$ so as to consider the processes $\Rec^y$ on $L_m$ with $y\in\Co^1$ and call $\mathcal{E}_m^1$ the event within the probability above. For $k$ as before define now
\[\Co^{2}_{k}\,:=\,\{y\in\Co^1_k \colon \text{ there is }x\in \St'_m\text{ with  }\U^{x,y}\cap[k\ut,(k+1)\ut)=\emptyset
\text{ and }\{x,y\}\in{\mathscr G}_{k\ut}\},\]
which will play the role of \textit{stable connectors} used by the infection to spread from $\St'_m$ towards new stars. To show that with a large probability the amount of stable connectors is large observe that from our choice of $\ut$ and the event $\mathcal{D}_m$,
\[\P(y\in\Co^2_{k}\,|\,\mathfrak{F}'_m)\geq 1-(1-\tfrac{e^{-1}}{N}p(a,1))^{\frac{3}{5}\cc_1'aN}\geq1-e^{-\frac{3\cc_1'}{5e}ap(a,1)}\geq\frac{3\cc_1'}{10e}ap(a,1),\]
on the event $\mathcal{D}_n\cap\{y\in\Co^1_{k}\}$, where we have used that $ap(a,1)<\frac{1}{2}$ for small $a$. Since the events \smash{$\{y\in\Co^2_k\}_{y\in\Co_k^1}$} are independent, it follows that there is $\cc_2'>0$ independent of $N$ such that on $\mathcal{D}_n\cap\mathcal{E}_m^1$
\[\P\left(|\Co^2_{k}|>\tfrac{\cc'_1}{180}ap(a,1)N\,\big|\,\mathfrak{F}'_m\right)\geq 1-e^{-\cc_2'N}\]
and using a union bound we conclude that on the same event
\begin{equation}\label{eq:stable2}\P(|\Co^2_{k}|>\tfrac{\cc'_1}{180}ap(a,1)N,\text{ for all }k\in\N, J_k\subseteq L_m\,|\,\mathfrak{F}'_m)\geq 1-e^{-\cc_2'N/2},\end{equation}
thus proving that there is a bounded fraction of stable connectors. Enlarge once again $\mathfrak{F}'_m$ so as to consider the processes $\U^{x,y}$ and $\mathcal{C}^{x,y}$ on $L_m$ with $y\in\Co^1$ and $x\in\St'_m$, and call $\mathcal{E}_m^2$ the event within the probability above; since the bounds in \eqref{eq:stable1} and \eqref{eq:stable2} are of the required form, all we need to show is that on $\mathcal{A}\cap\overline{\mathcal{C}}_m\cap\mathcal{E}^2_m$ we have
\[\P\left(\mathcal{B}_{m+1}\,\big|\,\mathfrak{F}'_m\right)\,\geq\,1-e^{-\cc_2' N}.\]
To do so fix any given $z\in\St_{m+1}\backslash\tilde{\St}_m$ and $k\in\N$ with $J_k\subseteq L_m$ and say that the event $\mathcal{E}^3_m(z,k)$ is satisfied if there is $y\in\Co^2_k$ such that
\begin{enumerate}
	\item $\U^{y,z}\cap[k\ut,(k+\tfrac{3}{4})\ut]\neq\emptyset$;
	\item $\I^{y,z}\cap [(k+\tfrac{3}{4})\ut,(k+\tfrac{7}{8})\ut]\neq\emptyset$;
	\item There is some $x\in\St'_m$ satisfying the conditions given in the definition of $\Co^2_k$, such that $\I^{x,y}\cap[k\ut,(k+\tfrac{3}{4})\ut]\neq\emptyset$ and there is infection event $t$ such that $X_t(x)=1$.
\end{enumerate}
For any $y\in\Co_k^2$ condition $(1)$ is satisfied with probability $1-e^{-\frac{3\ut}{4}(\kappa(y)+\kappa(z))}\geq\frac{\ut}{4}(\kappa(y)+\kappa(z))$. Assume that the first condition is satisfied and observe that in order for condition $(2)$ to occur it is enough that $\I_0^{y,z}\cap [(k+\tfrac{3}{4})\ut,(k+\tfrac{7}{8})\ut]\neq\emptyset$ and that at the first infection event in this interval the edge $\{y,z\}$ belongs to the network; since the edge was updated in the first half of the interval this has probability $(1-e^{-\frac{\lambda\ut}{8}})\frac{1}{N}p(a,1)\geq \frac{\lambda\ut p(a,1)}{16N}$. Finally, since $y\in\Co_k^2$ there is at least one star $x\in\St'_m$ satisfying the conditions given in the definition of $\Co^2_k$, and by definition of $\St'_m$, the set
\[\left\{t\in [k\ut,(k+1)\ut] \colon X_t(x)=1\right\}\]
has Lebesgue measure at least $\frac{1}{2}$, so its intersection with \smash{$[k\ut,(k+\tfrac{3}{4})\ut]$} has Lebesgue measure at least $\frac{1}{4}$ and hence $(3)$ is satisfied with probability at least \smash{$1-e^{-\lambda\ut/4}\geq\frac{\lambda\ut}{8}$}. On the event~$\mathcal{E}^2_m$ we have \smash{$|\Co^2_{k}|>\tfrac{\cc'_1}{180}ap(a,1)N$} so
\[\P\left(\mathcal{E}^3_m(z,k)\,\big|\,\mathfrak{F}'_m\right)\,\geq\,1-\left(1-\tfrac{\lambda^2\ut^3(\kappa(y)+\kappa(z))p(a,1)}{512N}\right)^{\tfrac{\cc'_1}{180}ap(a,1)N}\geq \tfrac{\lambda^2\ut^3(\kappa(y)+\kappa(z))\cc_1'ap^2(a,1)}{20000}.\]
Observe that condition $(1)$ in the definition of $\mathcal{E}^3_m(z,k)$ makes these events independent across different $k$ and hence by defining 
$$\mathcal{E}^3_m(z):=\bigcup_{k=m\bar{k}}^{(m+1)\bar{k}-1}\mathcal{E}^3_m(z,k),$$ we have
\begin{align*}\P\big(\mathcal{E}^3_m(z)\,\big|\,\mathfrak{F}'_m\big)&\geq\,1-\left(1-\sfrac{\lambda^2\ut^3(\kappa(y)+\kappa(z))\cc_1'ap^2(a,1)}{20000}\right)^{\bar{k}-1}\\[6pt]&\geq\,1-\exp\left(\tfrac{\lambda^2\ut^3(\kappa(y)+\kappa(z))\cc_1'ap^2(a,1)}{20000}\cdot\bar{k}\right)\end{align*}
but \smash{$\bar{k}=\lfloor e^{\delta\lambda^2\ut^2p(a,1)/4}\rfloor\geq(\lambda a)^{-M_{(iii)}\delta/4}$} so if $M_{(iii)}$ is large (depending on $\gamma$, $\varkappa $ and $\eta$ alone) then for all $\lambda$ and $a$ small this probability is bounded from below by $\frac{2}{3}$ regardless of $\cc_1'$.\smallskip

It follows from condition $(3)$ in the definition of $\mathcal{E}^3_m(z,k)$ that on this event the infection passes from some $x\in\St'_m$ to a connector $y\in\Co^2_k$ which then infects $z$ by condition~$(2)$. In particular, for any star \smash{$z\in\St_{m+1}\backslash\tilde{\St}_m$} the probability of getting infected at some given time in $L_m$ is bounded from below by $\frac{2}{3}$. Now, on $\mathcal{A}\cap\overline{\mathcal{C}}_m$ we have that both \smash{$|\tilde{\St}_{m}\cap\St_{m+1}|\leq2\cc_1'aN$} and \smash{$|\St_{m+1}|>\frac{aN}{3}$} so if we assume $\cc'_1<\frac{1}{12}$ then we necessarily have \smash{$|\St_{m+1}\backslash\tilde{\St}_m|>\frac{aN}{6}$}. As the events $\mathcal{E}_m^3(z)$ are independent across different $z$ a large deviation argument reveals that there is $\cc_3'>0$ independent of $N$ such that on $\mathcal{A}\cap\overline{\mathcal{C}}_n\cap\mathcal{E}_m^2$ we have
\begin{equation}\label{eq:stable3}
\P\big(|\{z\in\St_{m+1}\backslash\tilde{\St}_m,\,\mathcal{E}_m^3(z)\}|>\tfrac{aN}{12}\,\big|\,\mathfrak{F}'_m\big)\geq 1-e^{-\cc_3'N}.
\end{equation}
Finally, take $z\in\St_{m+1}\backslash\tilde{\St}_m$ satisfying $\mathcal{E}_m^3(z)$ so it gets infected at some time $s$ with $m\bar{k}\leq s\leq(m+1)\bar{k}-2$. It follows from our hypothesis $\lambda^2\ut^2p(a,1)>-M_{(iii)}\log(\lambda a)$ that $\lambda^2\ut^2p(a,1)\geq C'$ where $C'$ is as in Proposition \ref{lemmakeylocal2}. Even further, since $\lambda \ut\leq 1$ we also have that $\lambda\ut p(a,1)$ is very large so that
\[\sfrac{C'(\lambda\ut\log(\frac{\cc_1}{16}\lambda p(a,1))+ 1)}{\cc_1\lambda^2\ut p(a,1)}\leq\tfrac{1}{2}\]
and hence with probability at least $\frac{1}{2}$ we have \smash{$|\Co'_{k+1,z}|> \frac{\cc_1\lambda p(a,1)}{8}$} where $\cc_1$ is the constant appearing in Proposition \ref{uboundCk}. If this condition is satisfied we can use Proposition~\ref{survivaledge} to deduce that with probability at least $\frac{1}{2}$ the infection survives locally around $z$ throughout the rest of $L_m$ and then belongs to $\St_{m+1}$ (the details are analogous to the case where $\mathcal{C}_m$ was satisfied). By a large deviation argument, on the event \smash{$\mathcal{A}\cap\overline{\mathcal{C}}_n\cap\mathcal{E}_m^2$},
\[\P\big(|\tilde{\St}_{m+1}|>\tfrac{aN}{60}\,\big|\,\mathfrak{F}'_m\big)\geq 1-e^{-\cc_3'N},\]
for some $c_3'>0$ independent of $N$, and hence the result follows by taking $\cc_1'<\frac{1}{60}$.
\end{proof}

Using Proposition \ref{keyspreading} together with \eqref{eq:result1slow} and \eqref{eqauxedgespreading2} we conclude \eqref{recursiveedge}, and hence
\[\P\bigg(\bigcap_{m=0}^{e^{c_2'N/2}}\mathcal{B}_{m}\,\bigg|\,\mathfrak{F}\bigg)\,\geq\,1-e^{-c_2' N/2}.\]
on the event $\mathcal{A}$. Using Proposition \ref{prop:skeletonstable} we finally deduce that the event on the left (now unconditioned) has a probability which is bounded from below by $1-e^{-\cc_3'N}$ for some~$\cc_3'>0$ independent of~$N$ and we conclude that there is slow extinction.\medskip

To obtain the lower bound on the metastable density given by \eqref{lowdensityetaposdelayed} fix $t>0$ and observe that from the last proof we have deduced in particular that
\[\P\big(|\St'_{m}|>\cc_1'a N/2,\;\forall m\leq e^{c_2'N/2}\big)\,\geq\,1-e^{-\cc_2' N/2},\]
where from our construction, the event on the left can be taken independent from the processes involving vertices in $\Vo^{odd}$. Let $k=\lfloor t/\ut\rfloor-1$ and fix a realization of the graphical construction (not involving vertices in $\Vo^{odd}$) up to time $k\ut$ and of the $\U^{x,y}$, $\Rec^y$ and $\mathcal{C}^{x,y}$ processes with $y\in\Co^0$ up to time $k\ut+\ut$ such that $|\{x\in\St \colon \mathcal{W}_k(x)\}|>c_1'a N/2$. For any such $x$ we have $|\Co'_{k,x}|\geq \delta\lambda\ut p(a,1)$, and it is enough for $X_t(x)=1$ to have
\[\Rec^{x}\cap[k\ut,t]=\emptyset,\qquad\text{and}\qquad\exists y \in\Co'_{k,x},\;\I^{x,y}\cap[k\ut, t]\neq\emptyset.\]
The probability of this event is bounded from below by some constant $c>0$ by our assumption $\lambda^2 \ut^2p(a,1)\geq M_{(iii)}$. It follows that
$\P(|\{x\in\St \colon X_t(x)=1\}|>raN)\,\geq\,c$
for constant~$r$ and~$c$, so the result is a direct consequence of Lemma \ref{lemmalower}.
\pagebreak[3]

\subsection{Metastability}\label{sec-metastab}

In our previous work~\cite{JLM19}, which we recall was considering a different network dynamics, we could derive metastability of the process in the regimes of slow extinction from the following result:
		\begin{equation} \label{metastability}
		\limsup_{N\to \infty} \sup_{x\in \{1,\ldots,N\}} \P_x(t<T_{ext}<t_N) \underset{t\to \infty}{\to} 0,
		\end{equation}
		where $t_N=e^{\eps N}$, with $\eps>0$ sufficiently small. We obtained~\eqref{metastability} by infecting sufficiently many stars by time $t$, and then starting a survival strategy available for the infection in this regime.
In the current work, metastability can still be derived from~\eqref {metastability} but we cannot easily adapt its proof in the regimes where the quick strategies prevail. The problem is that our study of the survival strategy in these regimes relies heavily on the fact that the process is started from a constant proportion of stars infected, and cannot be adapted when we start with a sublinear number of infected stars. This is problematic as one cannot hope to first infect a constant proportion of the stars by a time $t$, which does not depend~on~$N$.%
\medskip

We therefore provide a separate argument, based on the duality of the process, to prove~\eqref{metastability}, where in this expression we have to run the contact process on the whole network and not only on the subset of vertices $\St\cup \Co$.
%
Fix a vertex~$x$ and write $(X_t^x)_{0\le t \le t_N}$ for the contact process starting from only $x$ infected, and write $(\check X_t)_{0\le t \le t_N}$ for the dual contact process, starting from everyone infected.  By construction (see Section \ref{sec:graphical}) the nonextinction event coincides with the event
	\[\{X_T^x\cap \check X_{t_N-T}\ne \emptyset\},\]
where $T$ is some time (which will be taken random), and where by a slight abuse of notation we identify $X_T^x$ and $ \check X_{t_N-T}$ with their respective sets of infected vertices. The advantage of writing nonextinction in this fashion is that $(\check X_t)_{0\le t \le t_N}$ has the same distribution as the original process started from full occupancy, and for this initial condition we have proved in all the regimes of slow extinction that with high probability the proportion of infected stars does not go below some level $\rho=\rho(\lambda)>0$ before time $t_N$. Such an event for the dual process roughly translates into the existence of valid infection paths starting from at least $\rho N$ different stars at time $T$, all the way to time $t_N$. 
Since this happens with high probability, it is enough for non extinction to occur to have at least one of those stars infected at time $T$. This will be achieved by choosing $T$ carefully.\bigskip

Fix $s\leq t_N$ and let $v\in\St$ be a star; we say that $v$ is \textit{$s$-good} if there is a valid path starting from $v$ at time $s$, and ending at time $t_N$. Define $E_N(s,t_N)$ as the event
	\[E_N(s,t_N):=\{\text{there are at least }\rho N\text{ $s$-good stars}\}\]
	Observe that this event relies only on ${\mathscr G}_s$ and on the graphical construction between times $s$ and $t_N$. In particular,
	\[\limsup_{N\to\infty}\P_x(E^c_N(s,t_N))=\limsup_{N\to\infty}\P_x(|\check X_{t_N-s}\cap\St|< \rho N)=0\]
	where the first equality follows from duality and stationarity of the environment at 
	time~$0$, and the second equality follows from the results on previous sections. Observing that the probability is independent of $x$ we can strengthen this statement to obtain 
	\begin{equation}\label{eq1meta}\limsup_{N\to \infty} \sup_{x\in \{1,\ldots,N\}} \P_x(E^c_N(s,t_N))=0.\end{equation}
 Fix now some arbitrary $r\in\N$, and define $T$ as the smallest integer $k$ such that:
 \begin{itemize}
 	\item There is a connector $u\in\Co$ which is infected at time $k-1/2$ and does not recover on the time interval $[k-1/2,k]$. We write $A^k_u$ for this event. 	
 	\item events $A^k_{u,y_1},\ldots, A^k_{u,y_r} $ described below hold for at least $r$
 	stars $y_1,\ldots, y_r \in \St$.
 	\end{itemize}
 	For each star $y\in\St$ we denote by $A^k_{u,y}$ the event that $u$ infects $y$ during 
 	$[k-1/2,k]$ and
 	\begin{itemize}
 		\item $y$ does not recover on $[k-1/2,k]$.
 		\item the edge $\{u,y\}$ is updated exactly twice on $[k-1/2,k]$; once on $[k-1/2,k-3/8]$ and once on $[k-2/8,k-1/8]$.
 		\item If the edge $\{u,y\}$ is present at time $k-1/2$, then either after the first update it becomes present, in which case it transmits the infection to $y$ during the interval $[k-3/8,k-2/8]$, or it is present after the second update, in which case the infection is transmitted during the interval $[k-1/8,k]$.
 		\item If the edge is absent at time $k-1/2$ then after the first update it is present, and transmits the infection to $y$ during the interval $[k-3/8,k-2/8]$.
 	\end{itemize}
We now provide a few simple observations regarding these events:
\begin{enumerate}
	\item If we condition on $T_{ext}>k-1$ and on the configuration at time $k-1$, then the probability of succeeding in finding the infected connector $u$, namely the probability of $\bigcup_{u\in\Co} A^k_u,$ is uniformly bounded below by some positive constant.
	\item There exists $0<\cc_1<\cc_2$ independent of $N$, $u$, $y$ and $r$ such that conditionally on~$A^k_u$, and on the state of the edge $\{u,y\}$ at times $k-1/2$ and $k$, the probability of~$A^k_{u,y}$ lies within $[\frac{\cc_1}{N},\frac{\cc_2}{N}]$.
	\item Conditionally on $A^k_u$, on the configuration at time $k-1/2$, \emph{and additionally on the graph configuration at time $k$}, the events $\{A^k_{u,y}\}_{y\in \St}$ are independent.
\end{enumerate}

Checking these observations is not difficult, and essentially relies on the independence of the different Poisson processes, and the fact that an edge between a vertex and a connector is updated at a rate lying between $\varkappa a^{-\gamma\eta}$ and $\varkappa$, and after each update it is present with probability between $\frac{1}{N}p(a/2,1/2)$ and $\frac{1}{N}p(a,1)$. We only provide some details for the bounds on the probability of $A_{u,y}^k$ conditional on $\{u,y\}$ being present at time $k$, which is the most delicate case in the second observation.
\smallskip

Assume first that $\{u,y\}$ is absent at time $k-1/2$. In order to satisfy \smash{$A_{u,y}^k$} and have $\{u,y\}$ present at time $k$ we require for $\{u,y\}$ to be present after both of its updatings, an event with probability \smash{$\Theta(\frac{1}{N^2})$} where the implied constants depend only on $p$, $a$, $\lambda$ and $\eta$. In order for $\{u,y\}$ to be present at time~$k$ it needs to update at least once and be present after its last update in $[k-1/2,k]$, which occurs with probability $\Theta(\frac{1}{N})$. We conclude that the conditional probability of having \smash{$A_{u,y}^k$} is of order 
\smash{$\Theta(\frac{1}{N})$}.\pagebreak[3]\medskip

Assume next that $\{u,y\}$ is present at time $k-1/2$ and notice that in order to satisfy \smash{$A_{u,y}^k$} and have $\{u,y\}$  present at time $k$ the best possible scenario is that $\{u,y\}$  becomes present after its second update and transmits the infection during $[k-1/8,k]$, which occurs with probability $\Theta(\frac{1}{N})$. On the other hand, since $\{u,y\}$  is present at time $k-1/2$, the most likely scenario in which $\{u,y\}$  is still present at time $k$ is the one in which it does not update throughout $[k-1/2,k]$, an event which has probability $\Theta(1)$. We conclude again that the conditional probability of having $A_{u,y}^k$ is of order $\Theta(\frac{1}{N})$.\medskip

Using the previous observations we can now prove \eqref{metastability}. Indeed, using the stopping time~$T$ and the event $E_N$ we can bound $\P_x(t<T_{ext}<t_N)$ from above by
\begin{equation}\label{meta2}
\P_x(t\le T\wedge T_{ext})+ \P_x(T<t, E_N(T,t_N)^{\rm c})+ \sum_{k\leq t}\P_x(T=k,  E_N(k,t_N),t<T_{ext}<t_N).
\end{equation}
For the first expression in \eqref{meta2} it follows from observation (1) that for all $k\leq t$, with probability uniformly bounded from below at least one of the events $A_u^k$ is satisfied, so conditioning on $A_u^k$ for some $u$ and on the configuration at time $k-1/2$, all we need is to show that at least $r$ stars $y$ satisfy $A_{u,y}^k$. Since there are $\Theta(N)$ stars, and the events $A^k_{u,y}$ are (conditionally) independent with probability $\Theta(1/N)$, the probability that we obtain at least $r$ of these events satisfied is indeed bounded below by some constant $\cc$ depending on $r$ but not on $x$, $k$ or $N$. In particular, in order for $\{t\leq T\wedge T_{ext}\}$ to occur the previous events have to fail for all $k\leq t$, and hence 
\[\sup_{x\in\{1,\ldots,N\}}\P_x(t<{T\wedge T_{ext}})\,\leq\,(1-\cc)^{\lfloor t\rfloor},\]
for all $N$ large. As a result, $\lim_{t\to\infty}\limsup_{N\to\infty}\sup_{x\in\{1,\ldots,N\}}\P_x(t<{T\wedge T_{ext}})=0$. For the second expression in \eqref{meta2} notice that
\[\P_x(T<t, E_N(T,t_N)^{\rm c})\,\leq\,\sum_{k\leq t}\P_x(E_N(k,t_N)^{\rm c}),\]
 and hence for any fixed $t$ we can use \eqref{eq1meta} $\lceil t\rceil$ times to conclude \[\limsup_{N\to\infty}\sup_{x\in\{1,\ldots,N\}}\P_x(T<t, E_N(T,t_N)^{\rm c})=0.\]
 Finally, for the terms $\P_x(T=k,  E_N(k,t_N),t<T_{ext}<t_N)$ appearing in \eqref{meta2}, fix any realization of the process until time $k-1/2$, any graph configuration $\mathscr G_k$, and any realization of the graphical construction between times $k$ and $t_N$ such that we have both $T>k-1$ and $E_N(k,t_N)$ and additionally $A_u^k$ is satisfied for some connector $u$, which we assume without loss of generality that $u$ is the smallest such connector. Calling 
 \smash{$\tilde{\P}$} the probability conditional on these realizations we have
 \[\tilde{\P}(T=k,  E_N(k,t_N),t<T_{ext}<t_N)\,\leq\,\tilde{\P}(S^t_k\geq r,\,S_k^i=0)\]
 where $S^t_k$ stands for the total amount of stars $y$ satisfying $A_{u,y}^k$, while $S^i_k$ stands for the ones satisfying \smash{$A_{u,y}^{{k}}$} but for which there is also a valid infection path starting at $(y,k)$ and ending at time $t_N$. Indeed, observe that as soon as $A_{u,y}^k$ occurs then $y$ is infected at time~$k$, and hence if there is a valid infection path starting at $(y,k)$ and ending at time $t_N$, then necessarily $T_{ext}\geq t_N$. Now, it follows from observations (2) and (3), and a straightforward computation that
 \[\tilde{\P}(S^t_k\geq r,\,S_k^i=0)\,=\,\tilde{\P}(S_k^i=0\,\big|\,S^t_k\geq r)\tilde{\P}(S^t_k\geq r)\,\leq\,\left(\tfrac{\cc_2(1-\rho)}{\cc_1\rho+\cc_2(1-\rho)}\right)^r\tilde{\P}(S^t_k\geq r)\]
 and since $\tilde{\P}(S^t_k\geq r)=\tilde{P}(T=k)$ we deduce
 \[\P_x(T=k,  E_N(k,t_N),t<T_{ext}<t_N)\,\leq\,\left(\tfrac{\cc_2(1-\rho)}{\cc_1\rho+\cc_2(1-\rho)}\right)^r\P_x(T=k,\,T_{ext}>k-1),\]
 so that
 \[\sum_{k\leq t}\P_x(T=k,  E_N(k,t_N),t<T_{ext}<t_N)\,\leq\,\left(\tfrac{\cc_2(1-\rho)}{\cc_1\rho+\cc_2(1-\rho)}\right)^r.\]
 Putting together the bounds for all terms appearing in \eqref{meta2} we conclude
 \[\limsup_{t\to\infty}\limsup_{N\to \infty} \sup_{x\in \{1,\ldots,N\}} \P_x(t<T_{ext}<t_N)\leq \left(\tfrac{\cc_2(1-\rho)}{\cc_1\rho+\cc_2(1-\rho)}\right)^r.\]
 Since $r$ is arbitrary we finally conclude \eqref{metastability}.

\section{Upper bounds}

\subsection{Upper bound by duality}

In this section, we provide an upper bound  for the upper metastable density
available for all kernel types
in the case $\eta\leq 0$. This upper bound is essentially the same as in the static case, which is an indication that  the temporal variability of the network does not significantly improve the spread of the infection.

\begin{proposition}\label{static_upperbound}
Consider the evolving network model with $\eta\leq0$. There is a constant $c>0$ depending on the various parameters but not on $\lambda$ such that the upper metastable density $\rho^+(\lambda)$ satisfies
\begin{equation}\label{static_upperdens}
\rho^+(\lambda)\le \left\{
\begin{array}{ll}
c \lambda^{\frac 1 {3-\tau}}& \mbox{ for the factor kernel if }\tau\le \frac 5 2,\\
c \lambda^{2\tau-3}& \mbox{ for the strong or preferential attachment kernel,}\\
c \lambda^{\tau-1}& \mbox{ for the weak kernel.} 
\end{array}
\right.
\end{equation}
\end{proposition}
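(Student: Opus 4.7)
The plan is to exploit self-duality to rewrite
\[
I_N(t) \;=\; \frac{1}{N}\sum_{i=1}^N \P_i(T_{\rm ext} > t),
\]
and, for suitable $t_N \to \infty$ slower than $e^{\varepsilon N}$, to bound each summand by considering the behaviour of the contact process started from a single infected vertex~$i$. The upper metastable density is then bounded by $\limsup_N I_N(t_N)$. The key input is the announced Lemma~\ref{evolving_to_static}, which lets me compare the dynamic graph to a static one in which every connection probability is inflated by at most a factor $1 + 4\varkappa$; since for $\eta \leq 0$ the update rates $\kappa_{i,j}$ are bounded by $2\varkappa$, this comparison is available uniformly in $N$ and in the identity of the vertices.

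Next, I would split vertices into ``weak'' vertices $i > aN$ and ``stars'' $i \leq aN$, for a threshold $a = a(\lambda)$ to be optimised. For a star, I use the trivial bound $\P_i(T_{\rm ext} > t) \leq 1$, so the stars contribute at most~$a$ to $I_N(t)$. For a weak vertex, I would argue that until time $t$ the process stays in a ``local dynamical neighbourhood'' that, by a first-moment argument on the inflated static graph, is with high probability tree-like with degrees bounded essentially by $a^{-\gamma}$ (or by the appropriate analogue for the strong/weak kernels). Applying Pemantle's local subcriticality result~\cite{P92}, which guarantees that the contact process on a static tree with degrees bounded by $\tfrac{1}{8\lambda^2}$ does not survive locally, together with the Lemma~\ref{evolving_to_static} comparison, I obtain that the infection started from a weak vertex~$i$ goes extinct in the neighbourhood before reaching a star, with probability tending to one provided $a^{-\gamma} \lesssim \lambda^{-2}$. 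A careful union bound over infection paths (inequalities of the type~\eqref{boundA4}--\eqref{boundA5}) will then show that for weak $i$, $\P_i(T_{\rm ext} > t_N)$ is bounded by the probability that the infection ``escapes'' to a star, which involves powers of $\lambda$ times appropriate integrals of the kernel.

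Finally, I would optimise the threshold $a$ for each kernel: choose $a$ so that the star contribution~$a$ and the weak-vertex contribution balance. For the strong and preferential attachment kernels, the escape probability from a weak vertex of expected degree $k$ scales like $\lambda^2 k$ times path-weights, and optimising leads to $a \sim \lambda^{2\gamma/(1-\gamma)}$ and the exponent $2\tau - 3$. For the factor kernel with $\tau \leq \tfrac52$ the escape is amplified by the factorised form of $p$ and optimisation yields $a \sim \lambda^{1/(2\gamma-1)}$, giving exponent $\tfrac{1}{3-\tau}$. For the weak kernel, the maximal degree available is essentially $a^{-\gamma}$ independent of the partner, and one reads off $a \sim \lambda^{1/\gamma}$ and exponent $\tau - 1$. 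The main obstacle in this scheme is the comparison of \emph{dynamical} local neighbourhoods with \emph{static} trees: one has to control not only the first moment of the neighbourhood size but also the dependencies introduced by updates during the survival time of the infection, which is precisely the content of Lemma~\ref{evolving_to_static} and the reason the proof is restricted to $\eta \leq 0$ where update rates are uniformly bounded.
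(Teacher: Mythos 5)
Your overall scheme is the one the paper actually follows: duality, a pruned local dynamical neighbourhood of the starting vertex whose degrees are kept of order $a^{-\gamma}\le 1/(8\lambda^2)$, Pemantle-type local subcriticality implemented through the supermartingales of \cite{MVY13}, the comparison Lemma~\ref{evolving_to_static} to pass from the evolving graph to the time-zero graph with connection probabilities inflated by a constant factor, and path-counting bounds of the type \eqref{boundA4}--\eqref{boundA5}, followed by a kernel-specific choice of the threshold $a$. Up to the optimisation step this matches Theorem~\ref{generalstatic} and its proof.

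The gap is in that optimisation step, and it is not cosmetic. The choice of $a$ is not a matter of balancing the star mass $a$ against the weak-vertex contribution: in all cases except the weak kernel the dominant term is the one-step escape to a star, of order $\lambda\int_a^1\!\int_0^a p(x,y)\,\mathrm dy\,\mathrm dx\asymp\lambda a^{1-\gamma}$, and both this term and $a$ are \emph{increasing} in $a$, so one takes $a$ as small as the constraints allow. For the strong/preferential attachment kernel the binding constraint is exactly your degree condition $a^{-\gamma}\lesssim\lambda^{-2}$, i.e.\ $a\asymp\lambda^{2/\gamma}$, which yields $\lambda a^{1-\gamma}=\lambda^{2/\gamma-1}=\lambda^{2\tau-3}$ while $a=\lambda^{2\tau-2}$ is of smaller order. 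Your proposed $a\sim\lambda^{2\gamma/(1-\gamma)}$ does not deliver this: when $\gamma>(\sqrt5-1)/2$ it violates the very condition $a^{-\gamma}\lesssim\lambda^{-2}$ on which your subcriticality argument rests, and otherwise (e.g.\ $\gamma=\tfrac12$, $\tau=3$, where your choice gives $a=\lambda^2$) the star contribution $a$ alone already exceeds the claimed $\lambda^{2\tau-3}$, so the stated exponent does not follow. A second, smaller point: for the factor kernel with $\gamma\ge 2/3$ the degree condition alone is insufficient, since each extra step through weak vertices contributes a factor $\asymp\lambda\int_a^1 y^{-2\gamma}\,\mathrm dy\asymp\lambda a^{1-2\gamma}$; it is the summability of these path weights, not a balance, that forces $a\asymp\lambda^{1/(2\gamma-1)}$ (you land on the correct value, but with $a\asymp\lambda^{2/\gamma}$, which your stated constraint would permit, the sum over path lengths diverges). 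With these corrections — $a\asymp\lambda^{2/\gamma}$ and bounded $R$ for the strong/PA kernel, $a\asymp\lambda^{1/(2\gamma-1)}$ and $R\asymp\log(1/\lambda)$ for the factor kernel with $\tau\le\frac52$, $a\asymp\lambda^{1/\gamma}$ for the weak kernel — your plan reproduces the paper's proof of Proposition~\ref{static_upperbound} via the estimates of $\mathscr F_1$ and $\mathscr F_2$.
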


\begin{remark}
In the quick direct spreading  phase, our upper bound actually matches the lower bound for $\rho^-(\lambda)$ up to a constant multiplicative factor. In the local survival phase, our upper bound matches the lower bound up to a multiplicative factor of order $\vert\log\lambda\vert^{\tau-2}$. Note that the precise order of the metastable density has been obtained on the (static) configuration model (for which we recall the connection probabilities follow the factor kernel) in \cite{MVY13} and for a preferential attachment model in \cite{VHC17}. This precise order often matches our lower bounds, but on the configuration model with $\tau>3$ it involves a logarithmic factor with a different exponent. In all cases we believe the precise order on our slowly evolving networks (namely $\eta\le 0$) matches the one on the corresponding static network, and the current section is a strong indication that is is indeed the case. 
	However, obtaining this precise order (up to a multiplicative constant) would involve substantial additional difficulties and technicality (as should already be clear from the mentioned works \cite{MVY13,VHC17}), and we did not pursue this direction further.
\end{remark}

In order to prove Proposition \ref{static_upperbound} we  introduce first a general theorem, which gives upper bounds in terms of the kernel and some additional parameters:
	\begin{theorem}\label{generalstatic} Consider the evolving network model with $\eta\leq0$, satisfying \eqref{condp}. Let $R\in\N$, and $a\in [0,1]$ be functions of $\lambda$ satisfying
\begin{equation}\label{condgeneralstatic}\lim_{\lambda\downarrow0}a(\lambda)=0,\;\text{ and }\qquad\lim_{\lambda\downarrow0}\frac{R\ln\big(R a^{-\gamma}\ln(1/\lambda)\big)}{a^{-\gamma}}=0,\end{equation}
and assume further that $R\geq c_R$ and $a\geq c_a\lambda^{\frac{2}{\gamma}}$ for some sufficiently large $c_a$ and $c_R$ (independent of $\lambda$ and $N$). Then there is $c'$ depending on the various parameters but not on $\lambda$ such that the upper metastable density $\rho^+(\lambda)$ satisfies
\begin{equation}\label{eqgeneralupper}
\rho^+(\lambda)\le a+2\lambda^{c'R}+\sum_{l=1}^{R-1}\big(2\lambda(1+8\varkappa )\big)^l\mathscr{F}_1(a,p,l)+\big(2\lambda(1+8\varkappa )\big)^R\mathscr{F}_2(a,p,R)
\end{equation}
where
\[\mathscr{F}_2(a,p,R):= 
\int_{[a,1]^R \times[0,1]} p(x_0,x_1) \ldots p(x_{R-1},x_R) \, \mathrm d x_0\ldots\mathrm d x_R,
\]
and for $l\in\N$,
\[
\mathscr{F}_1(a,p,l):=
\int_{[a,1]^l \times[0,a]} p(x_0,x_1) \ldots p(x_{l-1},x_l) \, \mathrm d x_0\ldots\mathrm d x_l.\]
\end{theorem}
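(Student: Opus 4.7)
\medskip

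\textbf{Proof plan.} My plan is to combine the self-duality of the contact process with a tree-like first-moment analysis of the local dynamical neighbourhood of a single infected vertex, and to absorb the effect of the updating dynamics into an inflated static connection probability. By self-duality,
$$I_N(t_N)\;=\;\frac{1}{N}\sum_{i=1}^N\P_i\big(T_{\rm ext}>t_N\big)$$
for any $t_N$ going to infinity subexponentially in $N$, so it suffices to bound the right-hand side by (the right-hand side of \eqref{eqgeneralupper}). Splitting the sum according to whether the starting vertex has rank in $\{1,\dots,\lceil aN\rceil\}$ or $\{\lceil aN\rceil +1,\dots,N\}$, the first block contributes at most $a$ via the trivial estimate $\P_i(T_{\rm ext}>t_N)\le 1$. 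It remains to show that for every \emph{weak} starting vertex $x$ (i.e.\ $x/N\in[a,1]$) the quantity $\P_x(T_{\rm ext}>t_N)$ is bounded by the three remaining terms of~\eqref{eqgeneralupper}, uniformly in $x$.

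For a weak starting vertex $x$, the event $\{T_{\rm ext}>t_N\}$ forces at least one of the following three scenarios:
\begin{itemize}
\item[(A)] for some $l\in\{1,\dots,R-1\}$ there exists a valid infection path $x=x_0\to x_1\to\cdots\to x_l$ in which $x_1,\dots,x_{l-1}$ are weak and $x_l$ is strong;
\item[(B)] there exists a valid infection path of length exactly $R$ starting from $x$ in which $x_0,\dots,x_{R-1}$ are weak and $x_R$ is arbitrary;
\item[(C)] neither (A) nor (B) holds, yet the infection is still alive at time $t_N$, and therefore survives inside the local dynamical neighbourhood of $x$ restricted to weak vertices at graph distance less than $R$.
\end{itemize}
Scenarios (A) and (B) will be treated by a union bound over vertex sequences, while (C) will be handled via a tree-comparison argument based on the method of \cite{P92}.

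For (A) and (B) the key input is the comparison with a static network alluded to in the introduction (Lemma~\ref{evolving_to_static} below). Since $\eta\le 0$, every edge updates at rate at most $2\varkappa$, and a careful coupling shows that the probability of having a valid infection across a single edge $\{i,j\}$ during a bounded time interval is at most $\lambda\,(1+4\varkappa)\,p_{i,j}$ times the length of that interval. Iterating this bound along a path of length $l$ and converting the sum over vertex labels to a Riemann integral via $p_{i,j}\le\frac{1}{N}p(i/N,j/N)$ produces a factor $(\lambda(1+4\varkappa))^l$ multiplying the integral defining $\mathscr F_1$ (for scenario (A)) or $\mathscr F_2$ (for scenario (B)). Summing over starting vertices, over the finitely many intermediate time windows up to $t_N$ during which such a path can emerge, and over $l$, absorbs the remaining gap between $(1+4\varkappa)$ and $(1+8\varkappa)$ and the further factor $2^l$ in the statement; condition~\eqref{condgeneralstatic} guarantees that the total number of time windows considered is swallowed by the exponential decay in $l$ and $R$.

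For scenario (C), the infection is confined to weak vertices at graph distance less than $R$ from $x$; such vertices have expected degree at most $c_2 a^{-\gamma}$ by~\eqref{condp}. The assumption $a\ge c_a\lambda^{2/\gamma}$ with $c_a$ sufficiently large ensures that, even after inflating connection probabilities by $(1+4\varkappa)$ via Lemma~\ref{evolving_to_static}, this bound is below $\frac{1}{8\lambda^2}$, so Pemantle's result~\cite{P92} applies and the associated branching infection on a static tree is locally subcritical. Iterating the resulting geometric decay across $R$ generations and reverting from the static to the dynamical setting yields the $2\lambda^{c'R}$ term for scenario (C). The main obstacle is precisely this step: one must rigorously transfer Pemantle's static-tree subcriticality to the dynamical neighbourhood while controlling the extra vertices that edge updates can drag into this neighbourhood over the time horizon $t_N$. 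Condition~\eqref{condgeneralstatic} on $R\log(Ra^{-\gamma}\log(1/\lambda))/a^{-\gamma}\to 0$ is exactly what is needed to keep the boundary of the explored ball small enough that a union bound over all vertices that could enter the neighbourhood via updates produces an error negligible compared to $\lambda^{c'R}$. Assembling the strong-vertex contribution $a$ with the bounds from (A), (B), (C) yields~\eqref{eqgeneralupper}.
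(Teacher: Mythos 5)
Your overall architecture (duality, restriction to a local neighbourhood pruned at level $a$, the degree bound $1/(8\lambda^2)$ coming from $a\ge c_a\lambda^{2/\gamma}$, inflation of connection probabilities by a constant factor to absorb updates, and the trichotomy ``hit a strong vertex / reach distance $R$ / survive locally'') is the same as the paper's. But there is a genuine gap: you never reduce the time horizon from $t_N$ to a scale independent of $N$. The paper bounds $\P_x(T_{\rm ext}>t_N)\le\P_x(T_{\rm ext}>T)$ with the \emph{fixed} time $T=c_TR\log(1/\lambda)$, and all subsequent estimates (tree-likeness of the local dynamical neighbourhood, the degree event, the path counts) are carried out on $[0,T]$ only. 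Your scenarios (A)--(C) are formulated over $[0,t_N]$ with $t_N$ subexponential in $N$: then the ``finitely many intermediate time windows up to $t_N$'' is of order $t_N$, i.e.\ grows with $N$, and cannot be ``swallowed by the exponential decay in $l$ and $R$'', which is a constant in $N$; likewise, over $[0,t_N]$ the aggregated local dynamical neighbourhood is neither tree-like nor small (over such times essentially every vertex joins it), so neither the union bound over its vertices nor the comparison with a bounded-degree tree can be controlled. Condition~\eqref{condgeneralstatic} cannot rescue this, since it involves only $\lambda$-dependent quantities and no $N$: in the paper its role is to make the probability that some vertex of the neighbourhood explored during $[0,T]$ has degree above $M=2c_2a^{-\gamma}$ (or aggregated degree above $(1+2\varkappa)TM$) at most $e^{-cM}\sum_{i\le R}\big((1+2\varkappa)TM\big)^i\le\lambda^{c'R}$, not to control vertices dragged in over the horizon $t_N$.

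The second missing ingredient is the quantitative mechanism that makes the fixed horizon $T$ sufficient and that discounts late-appearing paths. The paper uses the supermartingales $e^{t/4}\sum_y\one_{X_t(y)=1}(2\lambda)^{d(x,y)}$ of Mountford--Valesin--Yao on the (conditioned, bounded-degree) evolving neighbourhood: these give both $\P(\text{alive at }T)\le(2\lambda)^{-R}e^{-T/4}=\lambda^{R(c_T/4-1)}$ for your scenario (C) and the weight $(2\lambda)^{l(x)}e^{-b(x)/4}$ for the probability of ever infecting a vertex $x$ born at time $b(x)$; it is this exponential discount in the birth time, combined with the computation $\E[e^{-\delta(x,y,0)/4}]=(1+4(\kappa_x+\kappa_y))\,p_{x,y}\le(1+8\varkappa)\,p_{x,y}$ of Lemma~\ref{evolving_to_static}, that replaces your union bound over time windows and produces exactly the factors $\big(2\lambda(1+8\varkappa)\big)^l\mathscr{F}_1$ and $\big(2\lambda(1+8\varkappa)\big)^R\mathscr{F}_2$. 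Without the reduction to fixed $T$ and this temporal discounting, your bounds for (A), (B) and (C) do not close; with them, your plan essentially becomes the paper's proof.
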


The proof of Theorem \ref{generalstatic} relies on a careful definition of the local-neighbourhood of the uniformly chosen starting vertex $x_0\in \{1,\ldots, N\}$ of the infection. In the following, we drop $N$ from the notation, and write $V$ for the vertex set $\{1,\ldots, N\}$, and $E_t$ for the edge set of \smash{$\mathscr G_t$}.
Let $R\in\N$ and $a\in [0,1]$ be as in the statement and define $T=c_T R\ln(1/\lambda)$ for some constant $c_T$ to be chosen later. We define the local neighbourhood of $x_0$ up to time $T$, 
distance~$R$, and pruning at level $a$, as the evolving graph \smash{$(V_{[0,T]}^{R,a},E^{R,a}_t)_{t\le T}$}, which is the subgraph of $(V,E_t)_{t\le T}$ obtained by keeping only vertices and edges on a path with at most~$R$ edges starting at $x_0$, passing no vertex in $\{1,\dots, \lceil aN\rceil-1\}$. 
More precisely, we define, for~$t\ge 0$,
\[
E^{R,a}_t:=
\left\{ \{x,y\} \in E_t \colon
\begin{array}{l}
 \exists\, l\le R, \exists\, 0\le t_1 \le \ldots \le t_l=t, \\
 \exists\, x_1, \ldots, x_{l-1}\in \{\lceil aN\rceil, \ldots, N\}, x_l:=y\\
 \forall i\in \{1, \ldots, l\} \colon \{x_{i-1},x_i\} \in E_{t_i} 
\end{array}
\right\}.
\]
and $$E^{R,a}_{[0,T]}= \bigcup_{s\le T} E^{R,a}_s.$$
Denote by  $V^{R,a}_0$ the set of vertices in the connected component of $x_0$ in 
$(V,E^{R,a}_0)$, and by \smash{$V^{R,a}_{[0,T]}$} the set of vertices in the connected component of $x_0$ in \smash{$(V,E^{R,a}_{[0,T]})$}.
%
The contact process on the evolving graph $(V,E_t)_{t\le T}$ together with its graphical representation induces a contact process on the local neighbourhood \smash{$(V^{R,a}_{[0,T]},E^{R,a}_t)_{t\le T}$} by restricting the infection events to edges in \smash{$E^{R,a}_t$}. Actually, the former  dominates the latter and it is easy to see that  both contact processes coincide up to time $T$, or up to the infection time of some vertex in $\{1,\dots, \lceil aN\rceil-1\}$, or up to the infection time of some vertex $x$ at distance $R$ from $x_0$ (here the distance is understood in \smash{$(V^{R,a}_{[0,T]},E^{R,a}_{[0,T]})$}).
As neither $a$, $R$ nor $T$ depend on $N$, as $N\to \infty$ the graph \smash{$(V^{R,a}_{[0,T]},E^{R,a}_{[0,T]})$} is a tree with high probability so we only consider this case. 
 Recall that for any vertex $x\geq\lceil aN\rceil$ its expected degree is bounded from above by 
$$\int_0^1 p(x/N,y) dy\le c_2 (x/N)^{-\gamma}\leq c_2a^{-\gamma}.$$ 
 We define $M= 2 c_2 a^{-\gamma}$, which satisfies $M\le 1/(8\lambda^2)$ by our choice of $a$ when taking $c_a^\gamma\ge 16 c_2$. We define events $A_1$ to $A_5$ as follows:\smallskip
\begin{itemize}
\item $A_1=\big\{x_0\in\{1,\dots, \lceil aN\rceil-1\}\big\}$.
\smallskip
\item $A_2$ is the event that  
\begin{itemize}
\item either for some $t\le T$, a vertex in $(V^{R,a}_{[0,T]},E^{R,a}_t)$ has degree larger than $M$, 
\item or a vertex in the graph $(V^{R,a}_{[0,T]},E^{R,a}_{[0,T]})$ has degree larger than $(1+2\varkappa )TM$.
\smallskip
\end{itemize}
\item $A_3$ is the event that none of $A_1$ or $A_2$ occurs, but the infection on the evolving graph $(V^{R,a}_{[0,T]},E^{R,a}_t)_{t\le T}$ is still alive at time $T$.\smallskip

\item $A_4$ is the event that none of $A_1$ or $A_2$ occurs, but the infection on the evolving graph $(V^{R,a}_{[0,T]},E^{R,a}_t)_{t\le T}$ reaches some vertex in $\{1,\dots, \lceil aN\rceil-1\}$.\smallskip

\item $A_5$ is the event that none of $A_1$ or $A_2$ occurs, but the infection on the evolving graph $(V^{R,a}_{[0,T]},E^{R,a}_t)_{t\le T}$ reaches some vertex $x$ at distance $R$ from $x_0$. 
\end{itemize}
If none of these events occur, then the infection is extinct by time $T$.
So the upper metastable density $\rho^+(\lambda)$ satisfies
\begin{equation}
\label{SumA1A5}
\rho^+(\lambda) \le \sum_{i=1}^5 \P(A_i),
\end{equation}
and we now want to bound the probability of these events happening.
\pagebreak[3]
\medskip

To begin with, observe that the probability of $A_1$ is bounded by $a$. In order to bound the probability of $A_2$, we say a vertex $x\in \{ \lceil aN \rceil,\ldots, N\}$ is \emph{good} if its degree at times $t\le T$ stays bounded by $M$, and its aggregated degree (namely its degree in $(V, \bigcup_{t\le T} E_t)$) is bounded by $(1+2\varkappa  T) M$.

\begin{lemma} \label{GoodDegreeCondition}
There exists a constant $c>0$ independent of $\lambda$, such that every vertex $x\in \{ \lceil aN \rceil,\ldots, N\}$ has probability at least $1-e^{-cM}$ of being good. 
\end{lemma}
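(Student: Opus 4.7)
The plan is to split the failure of goodness into two events---the instantaneous bound $\deg_t(x)\le M$ failing at some $t\le T$, and the aggregated bound failing---and to show each has probability at most $e^{-cM}$. The crucial feature to exploit in the regime $\eta\le 0$ is that the edge update rates satisfy $\kappa_{x,y}\le 2\varkappa$ uniformly, making all relevant Poisson rates controllable. The argument rests on edges evolving as mutually independent two-state Markov chains, and on the fact that by stationarity the degree $\deg_t(x)$ has at each fixed $t$ the law of a sum of independent Bernoullis with mean at most $\sum_y p_{x,y}\le c_2(x/N)^{-\gamma}\le M/2$.

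I would handle the aggregated-degree bound first as it is simpler. A fixed potential edge $\{x,y\}$ contributes to $\bigcup_{t\le T}E_t$ iff it is present at time $0$ (probability $p_{x,y}$) or is switched on at some update during $[0,T]$; since the number of updates is Poisson of mean $\kappa_{x,y}T\le 2\varkappa T$ and each independently results in a connection with probability $p_{x,y}$, the overall contribution probability is at most $(1+2\varkappa T)p_{x,y}$. The aggregated degree is therefore a sum of independent Bernoullis with mean at most $(1+2\varkappa T)M/2$, and a Chernoff bound yields failure probability at most $e^{-cM}$.

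For the instantaneous-degree bound, I would exploit that between updates $\deg_t(x)$ is constant and can only increase via \emph{turn-on} events, i.e.\ updates of an absent edge producing a connection, whose total instantaneous rate is at most $\sum_y\kappa_{x,y}p_{x,y}\le 2\varkappa\cdot M/2=\varkappa M$. Discretise $[0,T]$ with mesh $\delta=1/(10\varkappa)$ and grid points $s_k=k\delta$: the number $V_k$ of turn-ons in $[s_k,s_{k+1}]$ is stochastically dominated by $\mathrm{Poisson}(M/10)$, so Chernoff gives $\P(V_k>M/5)\le e^{-c_1 M}$, while stationarity combined with Chernoff gives $\P(\deg_{s_k}(x)>4M/5)\le e^{-c_2 M}$. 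On the intersection of these favourable events, $\deg_t(x)\le M$ throughout $[s_k,s_{k+1}]$, and a union bound over the $O(T)$ grid points gives a failure probability at most $C\,T e^{-c_3 M}$.

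The main obstacle will be absorbing the $T$-factor from the union bound. Since $T=c_T R\log(1/\lambda)$ grows as $\lambda\downarrow 0$, one needs $\log T\ll M$, and this is exactly where assumption \eqref{condgeneralstatic} enters: it forces $R\log(Ra^{-\gamma}\log(1/\lambda))\ll a^{-\gamma}=M/(2c_2)$, so in particular $\log T\ll M$, and hence $Te^{-c_3 M}\le e^{-cM}$ for a suitable $c>0$ and all $\lambda$ sufficiently small. The resulting $c>0$ depends only on $\varkappa$ and $c_2$, not on $\lambda$ or $N$, which is exactly what the lemma asserts.
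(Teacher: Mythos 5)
Your proposal is correct: the aggregated-degree part is identical to the paper's (the aggregated degree is a sum of independent Bernoullis with mean at most $(1+2\varkappa T)M/2$, then Chernoff), and the instantaneous-degree part follows the same overall scheme (stationarity plus Chernoff at fixed times, discretisation of $[0,T]$, a union bound over $O(T)$ grid points, and absorption of the factor $T$ via \eqref{condgeneralstatic}, exactly as in the paper, which restates that hypothesis as $R\ln(MT)/M\to 0$ so that $\ln T\ll M$). The one place where you genuinely deviate is how the supremum over continuous time is reduced to grid times. The paper argues backwards and probabilistically: if $\deg(x,t)\ge M$ for some $t<T$, then, since each adjacent edge updates at rate at most $2\varkappa$, with probability bounded away from zero a constant proportion of these edges survives to the next grid time, so the exceedance probability over all $t$ is at most a constant times the sum over grid times of $\P(\deg(x,\cdot)\ge e^{-1/2}M)$. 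You instead argue forwards and deterministically: on each mesh interval the degree can only increase through turn-on events, whose total rate is at most $\sum_y \kappa_{x,y}p_{x,y}\le \varkappa M$, so with mesh $1/(10\varkappa)$ the in-interval increment is dominated by a $\mathrm{Poisson}(M/10)$ variable; combining a Chernoff bound for this Poisson count with the stationary Chernoff bound at the left grid point gives $\deg_t(x)\le M$ on the whole interval. Both routes work; yours avoids the conditional persistence step (whose constants in the paper are somewhat loosely stated) at the cost of introducing the thinned turn-on processes, and it uses the same key inputs ($\kappa_{x,y}\le 2\varkappa$ for $\eta\le 0$, stationarity, and $\ln T\ll M$), so the constant $c$ you obtain is, as required, independent of $\lambda$.
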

\begin{proof}
Observe that the degree of vertex $x \in \{ \lceil aN \rceil,\ldots, N\}$ at a given time is a {Poisson binomial random variable} with expectation bounded by $M/2$. As $1/2<e^{-1/2}$, using a classical Chernoff bound, the probability that it exceeds $e^{-1/2}M$ is bounded by $e^{-cM}$ for some constant $c>0$.\smallskip%

Suppose now that the degree of $x$ is larger than $M$ at some time $t<T$. Then, using that the updating rate of every edge adjacent to $x$ is at most 
$2\varkappa $, we obtain that with probability bounded from zero, at least a proportion $e^{-1/2}$ of these edges remain intact during $(t,t+\varkappa /2]$.
 Writing $\deg(x,t)$ for  the degree of $x$ at time $t$, for some constant $C>0$,
\begin{align*}
\P(\exists t<T, \deg(x,t)\ge M) &\le C \P\big(\exists k\in\{0,\ldots, \left\lfloor 2T/{\varkappa }\right\rfloor+1\},\deg(x,\varkappa  k /2)\ge e^{-1/2}M\big) \\
 &\le C \left(\tfrac {2T} {\varkappa } +2\right)e^{-cM}.
\end{align*}
Restating the third assumption in \eqref{condgeneralstatic} as  
\smash{$\lim_{\lambda\to0}\frac{R\ln(MT)}{M}=0$} we deduce in particular that $\lim_{\lambda\to0}{\ln(T)}/{M}=0$ so the upper bound above can be written in the form $e^{-cM}$ by decreasing the value of the constant $c>0$. Finally, the total aggregated degree of $x$ during the time interval $[0,T]$ is a Poisson binomial random variable with expectation bounded by $(1+2\varkappa  T) M/2$, so the probability that it exceeds $(1+2\varkappa  T) M$ is also exponentially small in $M$, proving Lemma~\ref{GoodDegreeCondition}.
\end{proof}

We may now apply Lemma~\ref{GoodDegreeCondition} repeatedly and obtain that the probability of $A_2$, namely the probability that there exists a vertex in $V^{R,a}_{[0,T]}$ failing to be good, is bounded above by
\[
e^{-c M} \sum_{i=0}^R \left((1+2\varkappa )TM\right)^i.
\]
Using the hypothesis \smash{$\lim_{\lambda\to0}\frac{R\ln(MT)}{M}=0$} we obtain that \smash{$\P(A_2)\le e^{-c' M}\le e^{-c'T}\leq\lambda^{c'R}$}, for some $c'>0$ independent of $\lambda$, where we have used that $M\geq T$ for small $\lambda$.
\medskip

Suppose now that we condition on a realisation of the evolving graph \smash{$(V^{R,a}_{[0,T]},E^{R,a}_t)_{t\le T}$} that is not in $A_1\cup A_2$. In particular, no degree in this evolving graph ever exceeds the value $1/(8\lambda^2)$. This allows to define supermartingales on it as follows. Write $d(x,y)$ the distance of two vertices $x$ and $y$ in \smash{$(V^{R,a}_{[0,T]},E^{R,a}_{[0,T]})$}. For every vertex \smash{$x\in V^{R,a}_{[0,T]}$}, we let $l(x)= d(x,x_0)$ and we denote by $b(x)$ the birth time of $x$, defined as smallest $t$ with \smash{$x\in V^{R,a}_{[0,t]}$}. 
Defining  
\[
M_t^{(x)}:=e^{t/4} \sum_{y\in V^{R,a}_{[0,T]}} \one_{X_t(y)=1} (2\lambda)^{d(x,y)},
\]
we get that $(M^{_{(x)}}_t \colon t\le T)$ is a supermartingale (see Lemma 5.1 in \cite{MVY13}, where these martingales were already introduced), starting from \smash{$M_0^{_{(x)}}=(2\lambda)^{l(x)}$} as initially only $x_0$ is infected. Moreover, if $x$ is ever infected after its birth time, then this martingale has to reach value at least $e^{b(x)/4}$. We easily deduce the following,\smallskip
\begin{enumerate}
\item[(a)] for $x\in V ^{R,a}_{[0,T]}$, the probability that $x$ gets infected is at most $(2\lambda)^{l(x)} e^{-b(x)/4}$.\smallskip

\noindent
\item[(b)] the probability that the infection (on the evolving graph $(V^{R,a}_{[0,T]},E^{R,a}_t)_{t\le T}$) is not extinct by time $T$ is bounded from above by \smash{$(2\lambda)^{-R} e^{-T/4}$}.
\end{enumerate}
Using (b) and the definition of $T$ we conclude that
\[\P(A_3)\leq \lambda^{-R} e^{-T/4}=\lambda^{R(c_T/4-1)}\]
where the exponent is positive when taking $c_T$ large. 
\medskip

It remains to bound $\P(A_4)$ and $\P(A_5)$, which we treat together.
Using (a), we have,
\begin{eqnarray}
\label{boundA4}
\P(A_4) &\le& \E\bigg[\sum_{\heap{x\in V^{R,a}_{[0,T]}}{ x<aN}} (2\lambda)^{l(x)} e^{-\frac {b(x)} 4}\bigg],\\
\label{boundA5}
\P(A_5) &\le& (2\lambda)^R \; \E\bigg[\sum_{\heap{x\in V^{R,a}_{[0,T]}}{l(x)=R}} e^{-\frac {b(x)} 4}\bigg].
\end{eqnarray}
We now have to bound the weighted sums appearing on the right-hand side. The following lemma shows that this weighted sum on the evolving graph can be replaced by a simpler expression concerning only the graph at time zero.

\begin{lemma}\label{evolving_to_static}
We have, for $l\in \{1, 2, \ldots, R-1\},$
\[
\E\bigg[\sum_{\heap{x\in V^{R,a}_{[0,T]}}{l(x)=l, x<aN}} e^{-\frac {b(x)} 4}\bigg] \le (1+8\varkappa )^l \;\E\Big[\big|\{x \in V^{R,a}_0 \colon  l(x)=l, x<aN\}\big|\Big]
\]
as well as
\[
\E\bigg[\sum_{\heap{x\in V^{R,a}_{[0,T]}}{l(x)=R}} e^{-\frac {b(x)} 4}\bigg] \le (1+8\varkappa )^R \;\E\Big[\big|\{x \in V^{R,a}_0\colon l(x)=R\}\big|\Big].
\]
\end{lemma}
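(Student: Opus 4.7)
The plan is to reduce the weighted sum over vertices in the evolving local neighbourhood to a weighted sum over temporal paths, and then to apply a single-edge estimate iteratively along each path. Since, by the preceding discussion, $(V^{R,a}_{[0,T]},E^{R,a}_{[0,T]})$ is a tree with high probability, each $x$ with $l(x)=l$ has a unique admissible path $(x_0,x_1,\ldots,x_l=x)$ to $x_0$ through vertices in $\{\lceil aN\rceil,\ldots,N\}$, and the birth time $b(x)$ equals $b_{\text{path}}$, defined as the smallest $t$ such that there exist $0\le t_1\le \cdots\le t_l=t$ with $\{x_{i-1},x_i\}\in E_{t_i}$ for each $i$. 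So my first step is to rewrite the left-hand side of both inequalities as $\E\bigl[\sum_{\text{paths}} e^{-b_{\text{path}}/4}\,\one_{b_{\text{path}}\le T}\bigr]$, where the sum runs over admissible tuples with $x_l<\lceil aN\rceil$ (first inequality) or with $l=R$ (second inequality).

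The key input is a single-edge estimate. For an edge in stationarity with presence probability $p$ and updating rate $\kappa$, let $\tau$ denote the first time $t\ge 0$ at which the edge is present. Splitting on whether the edge is present at time $0$ (with probability $p$), and otherwise summing over a geometric number of updating events with i.i.d.\ $\mathrm{Exp}(\kappa)$ gaps until the edge first becomes present, I obtain
\[
\E\bigl[e^{-\tau/4}\bigr] \;=\; p\,\frac{1+4\kappa}{1+4p\kappa} \;\le\; p\,(1+4\kappa).
\]
Under the assumption $\eta\le 0$, every edge satisfies $\kappa_{x,y}\le 2\varkappa$, so this improves to $\E[e^{-\tau/4}]\le p\,(1+8\varkappa)$.

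It then remains to iterate this estimate along a path. For a fixed admissible path, the $l$ edges $\{x_{i-1},x_i\}$ are distinct, hence have mutually independent update and connection processes; defining $t_0=0$ and $t_i=\min\{t\ge t_{i-1}\colon \{x_{i-1},x_i\}\in E_t\}$, stationarity of each edge yields that conditionally on $t_{i-1}$ the increment $t_i-t_{i-1}$ has the same law as the first-presence time of that edge starting fresh. Applying the tower property gives
\[
\E\bigl[e^{-b_{\text{path}}/4}\bigr] \;\le\; (1+8\varkappa)^l\,\prod_{i=1}^l p_{x_{i-1},x_i},
\]
and, after dropping $\one_{b_{\text{path}}\le T}\le 1$ and summing over admissible paths, this is identified with $(1+8\varkappa)^l\,\E\bigl[|\{x\in V^{R,a}_0\colon l(x)=l,\,x<\lceil aN\rceil\}|\bigr]$ in the tree limit, yielding the first inequality; the second follows identically with the endpoint constraint adjusted. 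The main technical point is the single-edge computation combined with the iterative step: both rely on the stationarity of each edge (which holds by choice of initial distribution) and on the independence of the Poisson processes governing distinct edges, without which the factorisation across the path would fail.
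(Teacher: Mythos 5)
Your proposal is correct and follows essentially the same route as the paper's proof: bound the weighted vertex sum by a sum over paths, factorise along the path using independence of the processes of distinct edges together with stationarity (conditionally on $b_{i-1}$, the increment has the law of the stationary first-presence time $\delta(x_{i-1},x_i,0)$), and apply the per-edge bound $\E[e^{-\tau/4}]\le (1+4\kappa_{x,y})\,p_{x,y}\le (1+8\varkappa)\,p_{x,y}$ before identifying the path sum with the time-zero count. The only differences are cosmetic: you compute the exact single-edge value $p\,\frac{1+4\kappa}{1+4p\kappa}$ (slightly sharper than the factor the paper uses), and your appeal to the tree structure is consistent with the reduction the paper already makes before stating the lemma.
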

\begin{proof}
First, observe that $8\varkappa $ is the exact value of $\E[\sum_{k\ge 1} e^{-\frac {Z_k} 4}]$ if $(Z_k)_{k\ge 1}$ are the ordered points of a Poisson point process of intensity $2\varkappa $ on the positive half-axis. 
We extend the definition of the birth time $b(x)$ to vertices $x\notin V^{R,a}_{[0,T]}$ by letting  $b(x)=\infty$ in this case. Then,
\begin{eqnarray*}
\E\bigg[\sum_{\heap{x\in V^{R,a}_{[0,T]}}{l(x)=l, x<aN}} e^{-\frac {b(x)} 4}\bigg]
&\le& \sum \E\left[ \one_{b(x_1)\le \ldots \le b(x_l)\le T, \forall i\le l \colon \{x_{i-1},x_i\} \in E_{b(x_i)}} e^{-b(x_l)}\right] \\[-5mm]
&\le& \sum \E\bigg[ \prod_{i=1}^l e^{-\frac{\delta(x_{i-1},x_i,b_{i-1})} 4}\bigg],
\end{eqnarray*}
where the sum is over different vertices $x_1,\ldots, x_l$ such that $x_l<a$ and $x_1,\ldots, x_{l-1}$ are larger than $a$, and $\delta(x,y,t):=\inf\{s\ge t \colon \{x,y\} \in E_s\} - t$.\pagebreak[3] \medskip

Observe that, conditionally on $\{b_{i-1}\le T\}$, the random variable $\delta(x_{i-1},x_i,b_{i-1})$ is independent of $\delta(x_{j-1},x_j,b_{j-1})$ for $j<i$, and has the same law as $\delta(x_{i-1},x_i,0)$. We can therefore bound this term further, by
\begin{eqnarray*}
\sum \prod_{i=1}^l\E\left[e^{-\frac{\delta(x_{i-1},x_i,0)} 4}\right]
&=& \sum \prod_{i=1}^l (1+4(\kappa_{x_{i-1}}+\kappa_{x_i})) \P\big(\{x_{i-1},x_i\} \in E_0\big) 
\end{eqnarray*}
\begin{eqnarray*}
&\le& (1+8\varkappa )^l \sum \prod_{i=1}^l \P\big(\{x_{i-1},x_i\} \in E_0\big), \\[-4mm]
\end{eqnarray*}
Here we get the equality by first conditioning on the updating times of the edges $\{x_{i-1},x_i\}$, which are Poisson point processes of intensity $\kappa_{x_{i-1}}+\kappa_{x_i}\le 2\varkappa $. The first inequality of Lemma~\ref{evolving_to_static} follows. 
We do not detail the proof of the second inequality, which is similar.
\end{proof}

Finally, it remains to estimate the right-hand side of the inequalities appearing in Lemma~\ref{evolving_to_static}.
Simple sum-integral comparisons give
\[
\E\Big[\big|\{x \in V^{R,a}_0 \colon  l(x)=l, x<a\}\big|\Big]\le 
\int_{[a,1]^l \times[0,a]} p(x_0,x_1) \ldots p(x_{l-1},x_l) \, \mathrm d x_0\ldots\mathrm d x_l.
\]
and\\[-5mm]
\[
\E\Big[\big|\{x \in V^{R,a}_0 \colon  l(x)=R\}\big|\Big]\le 
\int_{[a,1]^R \times[0,1]} p(x_0,x_1) \ldots p(x_{R-1},x_R) \, \mathrm d x_0\ldots\mathrm d x_R.
\]
Adding up the bounds for $\P(A_1)$, $\P(A_2)$, $\P(A_3)$, $\P(A_4)$, and $\P(A_5)$, we conclude the bound appearing in Theorem \ref{generalstatic}. In the following subsections we deduce the upper bounds appearing in Proposition \ref{static_upperbound} by applying the previous theorem for a particular choice of $a$ and $R$ for each kernel and by computing $\mathscr{F}_1(a,p,l)$ and $\mathscr{F}_2(a,p,R)$.

\subsection*{Application of Theorem~\ref{generalstatic} to the factor kernel.}

In the case of the factor kernel $p(x,y)= \beta x^{-\gamma} y^{-\gamma}$, we easily obtain
\[
\mathscr{F}_1(a,p,l)\le
\left\{ 
\begin{array}{ll}
 \frac \beta {(1-\gamma)^2} \left(\frac \beta {1-2\gamma}\right)^{l-1}a^{1-\gamma} &\text{if }\gamma<1/2, \\
\frac \beta {(1-\gamma)^2} \left(\beta\log(1/a)\right)^{l-1}a^{1-\gamma}&\text{if }\gamma=1/2 ,\\
\frac \beta {(1-\gamma)^2} \left(\frac {\beta a^{1-2\gamma}} {2\gamma-1}\right)^{l-1} a^{1-\gamma}&\text{if }\gamma>1/2,
\end{array}
\right.
\]
as well as
\[
\mathscr{F}_2(a,p,R)\le
\left\{ 
\begin{array}{ll}
 \frac \beta {(1-\gamma)^2} \left(\frac \beta {1-2\gamma}\right)^{R-1} &\text{if }\gamma<1/2, \\
\frac \beta {(1-\gamma)^2} \left(\beta\log(1/a)\right)^{R-1}&\text{if }\gamma=1/2 ,\\
\frac \beta {(1-\gamma)^2} \left(\frac {\beta a^{1-2\gamma}} {2\gamma-1}\right)^{R-1}&\text{if }\gamma>1/2.
\end{array}
\right.
\]
Observe that by choosing $a=c_a\lambda^{\frac{2}{\gamma}}$ the expression \smash{$\big(2\lambda(1+8\varkappa )\big)^l\mathscr{F}_1(a,p,l)$} decreases exponentially fast with $l$ when taking sufficiently small $\lambda$ and $\gamma<\frac{2}{3}$ so in that case
\[\sum_{l=1}^{R-1}\big(2\lambda(1+8\varkappa )\big)^l\mathscr{F}_1(a,p,l)\,\lesssim\, \big(2\lambda(1+8\varkappa )\big)\mathscr{F}_1(a,p,1)\,\lesssim\, \lambda a^{1-\gamma}.\]
Similarly,
$(2\lambda(1+8\varkappa ))^R\mathscr{F}_2(a,p,R)\,\lesssim\,\lambda^{(\frac{2}{\gamma}-3)R-1}$
which is of a smaller order if we assume $c_R$ to be sufficiently large. Since we can choose $c_R$ as large as needed, the dominating term on the right hand side of  \eqref{eqgeneralupper} is of order $\lambda a^{1-\gamma}$.\pagebreak[3]

In the case $\gamma\ge 2/3$, we can choose a larger value of $a$, namely $a=c_a \lambda^{\sfrac 1 {2\gamma-1}}\geq c_a\lambda^{\frac{2}{\gamma}}$ with
\[
c_a\ge \left(\tfrac {4(1+8\varkappa ) \beta}{2\gamma-1}\right)^{\frac 1 {2\gamma-1}},
\]
so that
\[
\sum_{l=1}^{R-1}\big(2\lambda(1+8\varkappa )\big)^l\mathscr{F}_1(a,p,l) \lesssim \lambda a^{1-\gamma} \sum_{l\ge 1} \frac 1 {2^{l-1}} \lesssim \lambda a^{1-\gamma}.
\]
We also choose $R=c_R \log(1/\lambda)$ with
\smash{$c_R\ge \sfrac \gamma{ (2\gamma-1)\log 2}$} so that $\lambda^R\mathscr{F}_2(a,p,R)\lesssim \lambda a^{1-\gamma}$. By choosing $c_R$ large we obtain from Theorem  
~\ref{generalstatic} that \smash{$\rho^+(\lambda)\lesssim \lambda a^{1-\gamma}\lesssim \lambda^{\frac{\gamma}{2\gamma-1}}=\lambda^{\frac{1}{3-\tau}}$} so we deduce~\eqref{static_upperdens} in the case of the factor kernel.

\subsection*{Application of Theorem~\ref{generalstatic} to the strong and preferential attachment kernel.}

As the strong kernel is dominated by the preferential attachment kernel $p(x,y)=\beta (x\wedge y)^{-\gamma} (x\vee y)^{\gamma-1}$, it suffices to look at the latter. As the integral estimates are more involved in this case, they are best treated 
by introducing the relevant operator. We suppose \smash{$\gamma\ne \frac 1 2$}, as in the case 
\smash{$\gamma=\frac 12$} the preferential attachment kernel agrees with the factor kernel and hence in that case 
\smash{$\rho^+(\lambda)\lesssim \lambda a^{1-\gamma}\lesssim \lambda^{{2}/{\gamma}-1}=\lambda^{2\tau-3}$}. For the case $\gamma\neq\frac{1}{2}$ we rely on the following lemma.
\begin{lemma} \label{operator_bound}
We have, for some constant $c>0$ that may depend on the parameters of the model but not on $a$ or $\lambda$,
\[
\mathscr{F}_1(a,p,l)\le
\left\{
\begin{array}{ll}
a^{1-\gamma} c^l a^{(\frac 12-\gamma)(l-1)}
 &\text{if } \gamma>1/2, \\
a^{1-\gamma} c^l &\text{if } \gamma<1/2.
\end{array} 
\right.
\]
Moreover,
\[
\mathscr{F}_2(a,p,R)\le
\left\{
\begin{array}{ll}
\big(c a^{\frac 1 2 - \gamma}\big)^{R}
 &\text{if } \gamma>1/2, \\
c^R &\text{if } \gamma<1/2.
\end{array} 
\right.
\]
\end{lemma}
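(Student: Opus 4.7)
The plan is to rewrite both integrals as $L^2$ inner products involving iterates of the integral operator
\[
Tf(x) := \int_a^1 p(x,y)\,f(y)\, dy, \qquad x\in [a,1],
\]
acting on $L^2([a,1])$, and then to control $T$ by Schur's test. Integrating the last variable in the definitions produces the boundary functions
\[
g(x) = \int_0^a p(x,y)\,dy = \tfrac{\beta}{1-\gamma}\,a^{1-\gamma}\,x^{\gamma-1},\qquad h(x) = \int_0^1 p(x,y)\,dy = \tfrac{\beta}{\gamma}\,x^{-\gamma}+ O(1),
\]
where the closed form for $g$ uses that $y<a\le x$ forces $p(x,y)=\beta y^{-\gamma}x^{\gamma-1}$. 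This yields $\mathscr{F}_1(a,p,l)=\langle\mathbf{1}, T^{l-1}g\rangle$ and $\mathscr{F}_2(a,p,R)=\langle\mathbf{1}, T^{R-1}h\rangle$, together with the elementary $L^2$-norm estimates $\|g\|_2\asymp a^{1-\gamma}$ when $\gamma>1/2$, $\|h\|_2\asymp a^{1/2-\gamma}$ when $\gamma>1/2$, and $\|h\|_2=O(1)$ when $\gamma<1/2$.

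The heart of the argument is an estimate on $\|T\|_{L^2([a,1])}$ obtained from Schur's test with the weight $\phi(x)=x^{-1/2}$. Splitting the kernel at $y=x$ gives
\[
T\phi(x) = \beta x^{\gamma-1}\int_a^x y^{-\gamma-1/2}\,dy + \beta x^{-\gamma}\int_x^1 y^{\gamma-3/2}\,dy,
\]
and I would evaluate the two elementary integrals exactly. When $\gamma<1/2$ the exponent $-\gamma-1/2$ is integrable at $0$, so extending the lower limit to $0$ only relaxes the bound, and a short calculation yields the pointwise inequality $T\phi(x)\le \tfrac{2\beta}{1/2-\gamma}\,\phi(x)$ on $(0,1]$; by symmetry of $p$, Schur's test then delivers $\|T\|_{L^2([a,1])}\le c_{<}:=\tfrac{2\beta}{1/2-\gamma}$ uniformly in $a$. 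When $\gamma>1/2$ the singularity at $0$ is genuine, and the cutoff at $a$ generates a boundary contribution of size $a^{1/2-\gamma}$; combining this with the pointwise comparisons $x^{\gamma-1}\le a^{1/2-\gamma}x^{-1/2}$ and $x^{-\gamma}\le a^{1/2-\gamma}x^{-1/2}$ valid on $[a,1]$, one obtains $T\phi\le c_{>}\,a^{1/2-\gamma}\phi$ with $c_{>}=2\beta/(\gamma-1/2)$, and hence $\|T\|_{L^2([a,1])}\le c_{>}\,a^{1/2-\gamma}$.

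The bounds on $\mathscr{F}_2$ now follow at once from Cauchy--Schwarz and submultiplicativity of the operator norm: $\mathscr{F}_2\le\|\mathbf{1}\|_2\|T\|^{R-1}\|h\|_2$ reduces to $c^R$ when $\gamma<1/2$ and to $(c\,a^{1/2-\gamma})^R$ when $\gamma>1/2$. The same argument handles $\mathscr{F}_1$ in the regime $\gamma>1/2$, where $\mathscr{F}_1\le\|g\|_2\|T\|^{l-1}\|\mathbf{1}\|_2\lesssim a^{1-\gamma}\,(c\,a^{1/2-\gamma})^{l-1}$ matches the claim exactly.

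The main obstacle is the bound on $\mathscr{F}_1$ when $\gamma<1/2$: because $g$ concentrates near $x=a$, one has $\|g\|_2\asymp a^{1/2}$, which strictly exceeds the target scale $a^{1-\gamma}$, so a blunt Cauchy--Schwarz is lossy. To recover the correct exponent I would perform a pointwise induction based on the explicit identity
\[
T(y^{\gamma-1})(x) = \beta x^{\gamma-1}\log(x/a) + \tfrac{\beta}{1-2\gamma}\bigl(x^{-\gamma}-x^{\gamma-1}\bigr)
\]
and its analogue for $T(y^{-\gamma})$, tracking the coefficients of $x^{\gamma-1}$ and $x^{-\gamma}$ separately through each iteration and integrating the resulting pointwise expression against $\mathbf{1}$. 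This yields the announced bound of the form $c^l a^{1-\gamma}$, possibly inflated by a polylogarithmic factor $(\log(1/a))^{O(l)}$ that reflects the near-criticality of $T$ at $\gamma=1/2$ and that is harmless in the downstream estimate of $\rho^+(\lambda)$ thanks to the hypothesis~\eqref{condgeneralstatic} on $R$.
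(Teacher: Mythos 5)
Your proposal follows the same operator-theoretic route as the paper's proof: the paper also introduces $Tg(x)=\int_a^1 p(x,y)g(y)\,\mathrm dy$ on $L^2(a,1)$, writes $\mathscr{F}_1(a,p,l)=\int_a^1 f\,(T^{l-2}g)$ and $\mathscr{F}_2(a,p,R)=\int_a^1 f\,\bigl(T^{R-2}(f+g)\bigr)$ with $f(x)=\int_a^1 p(x,y)\,\mathrm dy$ and $g(x)=\int_0^a p(x,y)\,\mathrm dy$ (your $h$ is exactly $f+g$), and then applies Cauchy--Schwarz with $\vertiii{T}\lesssim a^{\frac12-\gamma}$, $\|f\|_2\lesssim a^{\frac12-\gamma}$, $\|g\|_2\lesssim a^{1-\gamma}$ when $\gamma>\frac12$. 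The only difference in that regime is that the paper cites \cite{EM14} for the operator-norm bound, whereas you rederive it by Schur's test with weight $x^{-1/2}$; your test computation is correct and gives a self-contained substitute, and your bounds for $\mathscr{F}_2$ (both regimes) and for $\mathscr{F}_1$ with $\gamma>\frac12$ match the paper's.

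Where you genuinely depart is $\mathscr{F}_1$ for $\gamma<\frac12$, which the paper dismisses as ``the easier case'' and does not treat, and here your diagnosis is not only correct but sharper than the lemma itself. Indeed $\|g\|_2\asymp a^{1/2}\gg a^{1-\gamma}$, so blunt Cauchy--Schwarz is lossy, and your pointwise iteration is the right fix: since $x^{-\gamma}\le x^{\gamma-1}$ on $(0,1]$ for $\gamma<\frac12$, the identity $T(y^{\gamma-1})(x)=\beta x^{\gamma-1}\log(x/a)+\frac{\beta}{1-2\gamma}\,(x^{\gamma-1}-x^{-\gamma})$ (note your displayed second term has the sign reversed) closes an induction on the single profile $x^{\gamma-1}$, giving $\mathscr{F}_1(a,p,l)\le c\,a^{1-\gamma}\bigl(c\log(1/a)\bigr)^{l-1}$; there is no need to track $x^{-\gamma}$ separately. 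The logarithm you anticipate is not an artefact of your method: already for $l=2$ one has $\mathscr{F}_1(a,p,2)=\int_a^1 f g\asymp a^{1-\gamma}\log(1/a)$, so the log-free bound stated in the lemma cannot hold for $\gamma<\frac12$, and your log-corrected version is the correct statement. As you observe, this is harmless for Proposition~\ref{static_upperbound}, though the reason is that $a\asymp\lambda^{2/\gamma}$ makes $\log(1/a)\asymp\log(1/\lambda)$ and $\lambda\log(1/\lambda)\to0$, so the sum over $l$ is still dominated by its first term; the hypothesis \eqref{condgeneralstatic} on $R$ is not what absorbs the logarithms.
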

\begin{proof}
We proceed as in~\cite{EM14}, which contains a similar calculation, 
and we suppose for convenience $l\ge 2$ and $R\ge 2$. On the space $L^2(a,1)$, we introduce the operator $T$ by
\[
Tg(x)=\int_a^1 p(x,y) g(y) \de y
\]
Letting $f(x)=\int_a^1 p(x,y) \de y$ and $g(x)=\int_0^a p(x,y) \de y$, we may now write, for $l\ge 2$,
\[
\mathscr{F}_1(a,p,l)
\;=\;\int_a^1 f(x_1) (T^{l-2}g)(x_1) \, \de x_1\;\le\; \|f\|_2\ \|g\|_2\ \vertiii{T}^{l-2},\]
where $T$ is the operator norm. We do not treat the easier case $\gamma<\frac 12$, where the operator norm of $T$ is bounded independently of $a$. If $\gamma>\frac 12$, we get, as in \cite{EM14},
\[
\vertiii{T}\le \frac {\beta\sqrt 2} {2\gamma -1} a^{\frac 12-\gamma}.
\]
Moreover, easy computations give
\[
\|f\|_2\le \frac {c_2}{\sqrt{2\gamma-1}} a^{\frac 12-\gamma}, \qquad 
\|g\|_2\le \frac {\beta (1-a^{2\gamma-1})^{1/2}}{(1-\gamma)\sqrt{2\gamma-1}} a^{1-\gamma}.
\]
The first part of Lemma~\ref{operator_bound} follows. The second part is similar, starting with the observation
\[
\mathscr{F}_2(a,p,R)
=\int_a^1 f(x_1) \big(T^{R-2}(f+g)\big)(x_1) \, \de x_1.
\]
\ \\[-10mm]\end{proof}
Fix then $a=c_a\lambda^{\frac{2}{\gamma}}$ and $R=c_R$ for sufficiently large $c_R$ and $c_a$. Observing that multiplying $\lambda^l$ to the bound for $\mathscr{F}_1(a,p,l)$ in Lemma \ref{operator_bound} we get a geometric sum whose main contribution is the first term, that is,
\[\sum_{l=1}^{R-1}\big(2\lambda(1+8\varkappa )\big)^l\mathscr{F}_1(a,p,l) \lesssim \big(2\lambda(1+8\varkappa )\big)\mathscr{F}_1(a,p,1) \lesssim \lambda a^{1-\gamma}=\lambda^{2\tau-3}\]
The result then follows from Theorem \ref{generalstatic} and Lemma \ref{operator_bound} by observing that for sufficiently large $c_R$ we have
\smash{$a+2\lambda^{c'R}+\big(2\lambda(1+8\varkappa )\big)^R\mathscr{F}_2(a,p,R)\lesssim a=\lambda^{\frac{2}{\gamma}}=\lambda^{2\tau-2}\le \lambda^{2\tau-3}$}.

\subsection*{Application of Theorem~\ref{generalstatic} to the weak kernel.}
{For the weak kernel we can perform an easier calculation and get the following result.
\begin{lemma} \label{operator_bound_2}
For some constant $c>0$ that may depend on the parameters of the model but not 
on $a$ or $\lambda$, we have
\[
\mathscr{F}_1(a,p,l)\le
c^l a^{1-\gamma l}\]
and
\[
\mathscr{F}_2(a,p,R)\le c^R a^{1-\gamma R}.
\]
\end{lemma}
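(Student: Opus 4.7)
The bound for $\mathscr{F}_1$ is essentially immediate. On the integration domain one has $x_0,\ldots,x_{l-1}\geq a>x_l$, so $(x_{i-1}\vee x_i)^{-\gamma-1}\leq x_{i-1}^{-\gamma-1}$ for $i<l$ and the inequality holds with equality for $i=l$. Hence
\[
\prod_{i=1}^l p(x_{i-1},x_i)\;\leq\;\beta^l\prod_{j=0}^{l-1}x_j^{-\gamma-1}.
\]
Integrating each $x_j\in[a,1]$ yields $\gamma^{-1}a^{-\gamma}$ and the final integration $\int_0^a dx_l=a$ supplies the extra factor of $a$, giving $\mathscr{F}_1(a,p,l)\leq (\beta/\gamma)^l a^{1-\gamma l}$.

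For $\mathscr{F}_2$ the naive adaptation loses a factor because $x_R$ now ranges over $[0,1]$. I would instead exploit the self-adjointness of the operator $Tg(x):=\int_a^1\beta(x\vee y)^{-\gamma-1}g(y)\,dy$ on $L^2([a,1])$. After integrating out $x_R$, which produces $J_1(x_{R-1}):=\int_0^1 p(x_{R-1},y)\,dy\leq c\, x_{R-1}^{-\gamma}$, one has $\mathscr{F}_2(a,p,R)=\int_a^1 T^{R-1}J_1(x)\,dx$, and self-adjointness lets us rewrite this as $\int_a^1 J_1(y)\,T^{R-1}\mathbf 1(y)\,dy$. A short induction, based on the elementary estimate
\[
T(y^{-\alpha})(x)\;\leq\; C_\alpha\,x^{-\alpha-\gamma}\qquad (\alpha\in[0,1)),
\]
which is proved by splitting the defining integral at $y=x$ and using $x^{1-\alpha}\geq a^{1-\alpha}$ on the lower part, yields $T^{R-1}\mathbf 1(y)\leq C^{R-1}y^{-(R-1)\gamma}$ as long as $(R-1)\gamma<1$. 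Combining these inputs gives
\[
\mathscr{F}_2(a,p,R)\;\leq\;C^R\int_a^1 y^{-R\gamma}\,dy\;\leq\;c^R a^{1-\gamma R}
\]
in the regime $\gamma R>1$, which is the range in which the bound is applied to Theorem~\ref{generalstatic} for the weak kernel (ensured by choosing $R$ to be a constant slightly larger than $1/\gamma$).

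The main obstacle is the operator estimate $T(y^{-\alpha})(x)\leq C_\alpha x^{-\alpha-\gamma}$: it genuinely breaks down at the threshold $\alpha=1$, since then $\int_a^x y^{-\alpha}\,dy$ ceases to produce a clean power of $x$ and instead brings in an $a$-dependent prefactor. Iterating $T$ directly past that threshold would force one to carry along mixed $a$-dependent summands through the remaining steps, which is why I prefer the detour through self-adjointness: it localises the sole power-of-$a$ factor into the single outermost integral $\int_a^1 x^{-R\gamma}dx$, so the recursion itself only ever needs to be controlled in the subcritical range $\alpha<1$.
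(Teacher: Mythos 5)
Your treatment of $\mathscr{F}_1$ is correct and is exactly the kind of elementary computation the paper has in mind (it omits the proof, calling it an ``easier calculation''): on the domain $x_l\le a\le x_{l-1}$ each factor is dominated by $\beta x_{i-1}^{-\gamma-1}$, and integrating gives $(\beta/\gamma)^l a^{1-\gamma l}$.

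For $\mathscr{F}_2$, however, there is a genuine gap: your induction $T^{k}\mathbf 1(y)\le C^{k}y^{-k\gamma}$ is only available while the exponent stays below $1$, so your argument establishes the bound only for $R$ in a window of length at most $2$ just above $1/\gamma$ (with the condition $(R-1)\gamma<1$ that you state, the admissible window $1/\gamma<R<1+1/\gamma$ is even empty whenever $1/\gamma\in\N$; what the induction really needs is $(R-2)\gamma<1$). This restricted range does not match how the lemma is stated or used. Theorem~\ref{generalstatic} requires $R\ge c_R$ for a \emph{sufficiently large} constant $c_R$, and in the application to the weak kernel the term $2\lambda^{c'R}$ in \eqref{eqgeneralupper} must be $\lesssim\lambda^{\tau-1}$, i.e.\ $R\ge 1/(c'\gamma)$ for an unspecified, possibly small constant $c'$ coming from the bounds on $\P(A_2)$ and $\P(A_3)$; so one cannot in general take ``$R$ slightly larger than $1/\gamma$'', and the bound is needed for arbitrarily large constant $R$. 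The good news is that the obstacle you avoid is not actually there: once $k\gamma>1$ the iteration stays clean, since on $[a,1]$ one has $\int_a^x y^{-k\gamma}\,dy\le a^{1-k\gamma}/(k\gamma-1)$ and $x^{-(k+1)\gamma}\le a^{1-k\gamma}x^{-\gamma-1}$, so by induction $T^{k}\mathbf 1(x)\le C^{k}a^{1-(k-1)\gamma}x^{-\gamma-1}$ for all $k$ with $k\gamma>1$ --- no mixed $a$-dependent summands survive. Combining this with $J_1(y)\le Cy^{-\gamma}$ and $\int_a^1 y^{-2\gamma-1}\,dy\le a^{-2\gamma}/(2\gamma)$ gives $\mathscr{F}_2(a,p,R)\le c^{R}a^{1-\gamma R}$ for every $R>1/\gamma$, with no need for the self-adjointness detour. (You are right, implicitly, that the stated inequality cannot hold for $R<1/\gamma$, since then $\mathscr{F}_2$ is bounded below by a positive constant while $a^{1-\gamma R}\to 0$; the lemma is only meaningful, and only applied, with $R\gamma>1$ --- but it must cover all such $R$, not just those closest to $1/\gamma$.)
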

Choosing $a=c_a\lambda^{\frac{1}{\gamma}}$ and $R=c_R$ for sufficiently large $c_a$ and $c_R$ and perform similar computations as in the previous cases to deduce \smash{$\rho^{+}(\lambda)\lesssim \lambda a^{1-\gamma}=a=\lambda^{\frac{1}{\gamma}}=\lambda^{\tau-1}$}, thus we conclude the bound in Proposition~\ref{static_upperbound} for the weak kernel.
}

\subsection{Upper bound by the supermartingale technique}
In this section we prove fast extinction (in the relevant phases) or provide an upper bound for the upper metastable density, based on adaptations of the supermartingale technique developed in \cite{JLM19}. More precisely, we prove the following result:

\begin{proposition}
\label{supermartingale_upperbound}
\ \\[-5mm]
\begin{itemize}
\item[(a)]
Consider the factor kernel and $\eta\ge 0$. 
\begin{itemize}
\item[(i)] If $\eta> \frac12$ and $\tau>3$, there is fast extinction.
\item[(ii)] If $\eta<\frac 12$ or $\tau<3$, the upper metastable density satisfies,
\[
\rho^+(\lambda)\le 
\left\{
\begin{array}{ll}
c \lambda^{\frac 1{3-\tau}} &\mbox{if }\tau\le \frac 5 2+\eta \mbox{ and }\tau<3,\\
c \lambda^{\frac {2\tau-3-2\eta}{1-2\eta}} &\mbox{if }0\le \eta\le \frac 1 2 \mbox{ and }\tau\ge \frac 5 2+\eta.
\end{array}
\right.
\]
\end{itemize}

\item[(b)]
Consider the {strong or} preferential attachment kernel and $\eta\ge 0$. 
\begin{itemize}
\item[(i)] If $\eta> \frac12$ and $\tau>3$, there is fast extinction.
\item[(ii)] If $\eta<\frac 12$ or $\tau<3$, the upper metastable density satisfies,
\[
\rho^+(\lambda)\le 
\left\{
\begin{array}{ll}
c \lambda^{\frac {\tau-1}{3-\tau}} &\mbox{if }\tau\le 2+2\eta \mbox{ and }\tau<3,\\
c \lambda^{\frac{2\tau-3-2\eta}{1-2\eta}} &\mbox{if }0\le \eta\le \frac 1 2 \mbox{ and }\tau\ge 2+2\eta.
\end{array}
\right.
\]
\end{itemize}
\item[(c)]
Consider the weak kernel and $\eta\ge 0$. Then the upper metastable density satisfies
$$\rho^+(\lambda)\le c \lambda^{\tau-1} \log(1/\lambda)^{\tau}.$$
\end{itemize}
\end{proposition}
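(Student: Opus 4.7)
The plan is to apply the two-step supermartingale technique developed in~\cite{JLM19}, adapted to the edge-updating dynamics as outlined in Theorem~\ref{teoupper_edge}. First, I would couple the contact process on $(\mathscr G^{\ssup N}_t)$ to a wait-and-see process which stochastically dominates it and has a simplified Markovian transition: at any updating time of a potential edge $\{i,j\}$, instead of sampling the new status of the edge immediately, one postpones the sampling until the first time after the update that the infection actually attempts to cross the edge, keeping the edge pessimistically present in the meantime. This breaks correlations between distant edges and produces a Markov chain whose generator decomposes nicely over local operations at single edges.

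Second, I would assign to every configuration $(\mathscr G,X)$ a nonnegative score of the form
$$\Phi(\mathscr G,X)\;=\;\sum_{i=1}^{N}\phi(i)\,X(i)\;+\;\sum_{\{i,j\}\in\mathscr G}\psi(i,j)\,X(i)\,X(j),$$
where $\phi(i)=(N/i)^{\gamma\alpha}$ for an exponent $\alpha=\alpha(\eta,\tau)$ to be optimised, and $\psi(i,j)$ absorbs the favourable drift produced when the infection crosses an already infected edge. The key drift computation at vertex~$i$ balances the gain of order $\lambda(N/i)^\gamma\phi(i)$ coming from infection attempts along adjacent edges against the loss of order $(1+\kappa_i)\phi(i)$ from recoveries and edge updates; choosing $\alpha$ so that this balance is negative uniformly in~$i$ yields $\E[\Phi_t]\le\Phi_0\,e^{-ct}$ for some $c>0$ under the wait-and-see dynamics. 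Since $\Phi_t$ dominates a constant multiple of the number of infected vertices, optional stopping at $T_{\rm ext}$ together with an evaluation of $\Phi_0$ gives the claimed bounds on $\rho^+(\lambda)$ in cases~(a)(ii),~(b)(ii) and~(c); for the weak kernel, a truncation of the sum at strength level $\log(1/\lambda)$, reflecting the absence of genuine hubs in that case, is responsible for the extra polylogarithmic factor $\log(1/\lambda)^{\tau}$.

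The main obstacle is the intermediate regime $0<\eta<\tfrac12$, where both local survival at strong vertices and the updating of their adjacent edges contribute nontrivially to the drift. There a star of expected degree $k$ can sustain the infection for a time of order $\exp(c\lambda^2 k^{1-2\eta})$, and the optimal exponent $\alpha$ in the supermartingale must interpolate between the static value realised at $\eta=0$ and the mean-field threshold reached at $\eta=\tfrac12$; this optimisation produces the exponent $(2\tau-3-2\eta)/(1-2\eta)$ and accounts for the first-order transition at $\eta=0$ visible in Figure~\ref{one}. The computation is delicate because the edge update rate $\kappa_i\sim\varkappa(N/i)^{\gamma\eta}$ is degree-dependent and enters the drift in a way which cannot be decoupled from the weight $\phi(i)$; striking the correct balance is exactly what distinguishes the various phases in~(a)(ii) and~(b)(ii).

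For the fast extinction statements~(a)(i) and~(b)(i), when $\eta>\tfrac12$ and $\tau>3$ the same score with a simpler choice of $\alpha$ gives a weighted adjacency operator whose spectral radius is strictly less than~$1$ for all $\lambda$ small enough, so $\E[\Phi_t]$ decays like $e^{-ct}$ with $c$ bounded away from zero. Starting from $\Phi_0\le CN$ and combining with the monotonicity and self-duality of the contact process yields $\E T_{\rm ext}\le C\log N$, i.e.\ fast extinction. These fast-extinction arguments are essentially those of~\cite{JLM19}, which is why only the metastable supermartingale construction requires genuinely new input.
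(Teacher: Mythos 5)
Your overall route (couple to a wait-and-see process, attach a weighted score, show it is a supermartingale, optimise the weights per kernel and phase) is indeed the paper's route via Theorem~\ref{teoupper_edge}, but two of your central steps would fail as written. First, you claim that one can choose the weight exponent so that the drift is negative \emph{uniformly in $i$}, giving $\E[\Phi_t]\le\Phi_0e^{-ct}$, and then read off $\rho^+(\lambda)$ by optional stopping. In the regimes of (a)(ii), (b)(ii) and (c) this is impossible: those are exactly the regimes where Theorem~\ref{teofinal} gives slow extinction, and exponential decay of a score dominating the number of infected vertices would force extinction in time $O(\log N)$ from any initial condition. The missing idea is the cutoff level $a=a(\lambda)$: the supermartingale property is only established while no vertex in $\{1,\dots,\lceil aN\rceil-1\}$ has been infected, one stops at the hitting time $T_{hit}$ of that set, and the metastable-density bound has the form $\rho^+(\lambda)\le a+\frac1{s(a)}\int_a^1 s(y)\,dy+\dots$, where the first term pays for starting among the strong vertices and $\P_x(T_{hit}<T_{\rm ext})\le s_x/s(a)$ pays for reaching them. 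Choosing $a$ as large as the drift condition allows is precisely where the exponents come from; your sketch has no such cutoff for the factor and preferential attachment/strong kernels (and for the weak kernel the truncation is at rank $a\asymp(\lambda\log(1/\lambda))^{1/\gamma}$, not at ``strength $\log(1/\lambda)$'').

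Second, your score and drift accounting do not capture the edge-updating mechanism that determines where the cutoff can be placed. In the wait-and-see process each \emph{revealed} edge must carry weights of order $\lambda/\kappa_{x,y}$ and $\lambda/\kappa_{x,y}^2$, also when its incident vertex is currently healthy: this prepays for the reinfection of a recovered star by its previously infected neighbours, and is drained at rate $\kappa_{x,y}$ by updates. The resulting constraint is $\lambda^2\int_0^1 p(x,y)(\kappa(x)+\kappa(y))^{-2}dy\lesssim1$, i.e.\ vertices of expected degree up to $\lambda^{-2/(1-2\eta)}$ are harmless, and this is what produces the exponent $(2\tau-3-2\eta)/(1-2\eta)$. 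Your balance of $\lambda(N/i)^\gamma\phi(i)$ against $(1+\kappa_i)\phi(i)$ instead yields the threshold $k\lesssim\lambda^{-1/(1-\eta)}$ (at $\eta=0$ this is $\lambda^{-1}$ rather than the correct static $\lambda^{-2}$), and your edge term $\psi(i,j)X(i)X(j)$ vanishes at the very moment a star recovers, so the subsequent reinfection is an uncompensated upward jump and the supermartingale property fails exactly where local survival operates. Two further points you would need: for the strong/preferential attachment kernel in the quick indirect phase a single monomial weight cannot give $\lambda^{(\tau-1)/(3-\tau)}$ -- the paper needs the two-term score $s(x)=x^{\gamma-1}+\frac{5\lambda}{2\gamma-1}x^{-\gamma}$ reflecting direct and indirect infection routes; and the coupling should leave unrevealed edges crossing at rate $\lambda p_{x,y}$, not ``pessimistically present'', otherwise the domination is far too generous to yield the stated exponents.
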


\begin{remark}
	In the local survival phases, as well as in the phase $\eta>0$ for the weak kernel, our upper bound for $\rho^+(\lambda)$ differ from our lower bound on $\rho^-(\lambda)$ by a multiplicative factor of logarithmic order. In those phases the precise order for the metastable density is unclear.
\end{remark}

Similarly as in \cite{JLM19}, we first prove {a general theorem obtained by the introduction of an appropriate supermartingale, and then apply it to different kernels.}

\begin{theorem}
\label{teoupper_edge}
Let $\eta\ge 0$ and define {$\kappa(x)=\kappa_{\lfloor xN\rfloor}$} and $\psi \colon (0,1)\to(0,\infty)$ as
\[
\psi(x)= \int_0^1 \frac {p(x,y)} {(\kappa(x)+\kappa(y))^2} \, \mathrm d y.
\]
For $\lambda>0$, suppose there exists some $a=a(\lambda)>0$ and some non-increasing and integrable function $s:[a,1]\rightarrow [1,\infty)$ (or $s:(0,1]\rightarrow [1,\infty)$ in the case $a=0$) such that
\begin{eqnarray}\label{cond1}
6 \lambda^2 \psi(x)&\le& 1 \quad \forall x\in (a,1],\\
\label{cond2} 3\lambda \left(1+\frac {\lambda}{2\varkappa ^2}\right)\int_0^1 p(x,y) s(y\vee a) \mathrm d y&\;\leq\;&s\left(x\right) \quad \forall x\in (a,1].
\end{eqnarray}
Then, 
\begin{enumerate}
\item if  $a=0$, then there is some $\omega=\omega(\lambda)$ such that, for large~$N$,
\begin{equation}\label{resextime_edge}
\E\big[T_{\rm ext}\big]\;\leq\; \omega \log N. \notag
\end{equation}
In particular there is fast extinction.
\end{enumerate}\pagebreak[3]
\begin{enumerate}
\item[(2)] if $a>0$, then there exists $\omega=\omega(\lambda)>0$ such that, for all $N$ and all $t\ge0$, we have
\begin{equation}
\label{resdens}
I_N(t)\;\leq\;a+\frac{1}{s(a)}\int_a^1 s(y)\, dy + \frac \omega t+\frac 1 N.
\end{equation}
In particular, if there is metastability, then the upper metastable density satisfies
\begin{equation}
\label{upperdens_edge}
 \rho^+(\lambda) \le a(\lambda)+\frac{1}{s(a(\lambda))}\int_{a(\lambda)}^1 s(y)\, dy.
\end{equation}
\end{enumerate}
\end{theorem}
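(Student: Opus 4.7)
The plan is to adapt the supermartingale technique from \cite{JLM19} (which was developed for simultaneous vertex updating with $\eta\ge 0$) to the present edge-updating regime, proceeding in three steps: a wait-and-see coupling, the construction of a score function that is a supermartingale on the coupled process, and finally an application of optional stopping.

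First, I would couple $(X_t,\mathscr{G}_t)$ to a \emph{wait-and-see process} $(\tilde X_t,\tilde{\mathscr G}_t)$ whose infection dominates $X_t$ and whose Markov transitions are easier to enumerate. Besides the currently infected vertices, the state of the wait-and-see process also tracks \emph{pending edges}: pairs $\{i,j\}$ where the endpoint $i$ is infected, the edge is currently present, and an infection has already been drawn, but $j$ has not yet been flipped; such a pending status expires when the edge updates, when $i$ recovers, or when $j$ gets infected by another route. On this process I would define a score function of the form
\[
F(\sigma)=\sum_{\substack{i\text{ infected}\\ i>aN}} s(i/N)\;+\;\frac{\lambda}{2\varkappa^2}\sum_{\{i,j\}\text{ pending}}\frac{s(i/N\vee a)+s(j/N\vee a)}{\kappa(i/N)+\kappa(j/N)},
\]
where the weight of a pending edge is calibrated against its expected lifetime $1/(\kappa(i/N)+\kappa(j/N))$; vertices below $aN$ contribute the $a$ term in the final bound.

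The next step is to compute the generator of $F$ along the wait-and-see dynamics and verify that $F(\tilde X_t,\tilde{\mathscr G}_t)$ is a supermartingale. Infections of vertex $j>aN$ from an infected $i$ contribute $\lambda p_{i,j}\,s(j/N)$ in drift; I would compare this against the recovery drift $-s(i/N)$ by summing over $j$ and using \eqref{cond2}, which is exactly the condition that this first-order drift is nonpositive. The subtle contribution comes from edge updates, which can freshly insert a pending edge: the associated probability of causing an actual transmission scales like $\lambda/(\kappa(i/N)+\kappa(j/N))$, and integrating squared contributions gives the quantity $\psi(x)$; the constant $6$ in \eqref{cond1} arises from collecting recovery, edge updating, and infection events in both directions. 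Assuming \eqref{cond1} and \eqref{cond2}, the generator of $F$ is non-positive, proving the supermartingale property.

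Finally, I would apply optional stopping. In case (2), $F$ is bounded above by $N\sup_{y\ge a}s(y)$ at time $0$, so integrating the supermartingale inequality $\E[F(\sigma_t)]\le F(\sigma_0)$ and combining it with the obvious bound $\sum_{i>aN}X_t(i)\le F(\sigma_t)/s(1)\cdot[s(a)/s(a)]$ after a Cauchy--Schwarz-type comparison gives $I_N(t)\le a+\frac{1}{s(a)}\int_a^1 s(y)\,dy+\omega/t+1/N$, as in \cite{JLM19}; the limit $t\to\infty$ then yields \eqref{upperdens_edge} under metastability. For case (1) with $a=0$, I would apply optional stopping to a suitable exponential transform of $F$ (using that $F\ge 1$ while the infection is alive since $s\ge 1$), obtaining $\E[T_{\rm ext}]\le\omega\log N$ where $\omega$ depends on $\lambda$ through $\int_0^1 s(y)\,dy$.

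The main obstacle is the design of the wait-and-see process and the precise edge term in $F$: in \cite{JLM19} all edges adjacent to a vertex updated simultaneously, which meant that pending edges could be analysed as a single block, whereas here each pending edge evolves independently at its own rate $\kappa(i/N)+\kappa(j/N)$. Making the pending-edge weights consistent with both recovery and update events — so that \eqref{cond2} captures the first-order balance and \eqref{cond1} controls the quadratic fluctuations from the $\kappa$-dependent edge updating — is the delicate part. Once this is set up correctly, the drift computation and optional stopping step follow the template of \cite{JLM19} closely.
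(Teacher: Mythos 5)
Your overall template (wait-and-see coupling, score-function supermartingale, optional stopping) is the same as the paper's, but the way you propose to extract the bounds from the supermartingale has a genuine gap. You plan to start from full occupancy and use $\E[F(\sigma_t)]\le F(\sigma_0)$ with $F(\sigma_0)\approx N\int_a^1 s$; dividing by $N$ this can only give $I_N(t)\lesssim a+\int_a^1 s(y)\,dy$, which misses the crucial factor $\tfrac1{s(a)}$ in \eqref{resdens} (the phrase ``$F(\sigma_t)/s(1)\cdot[s(a)/s(a)]$ after a Cauchy--Schwarz-type comparison'' does not produce it). Worse, at full occupancy the vertices $i\le aN$ are infected and keep infecting weak vertices, while conditions \eqref{cond1}--\eqref{cond2} are only assumed for $x\in(a,1]$, so no score of your form can have nonpositive drift in that regime; excluding those vertices from the sum does not help, since infections emanating from them increase $F$ with nothing to compensate. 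The paper avoids both problems by self-duality: it writes $I_N(t)=\frac1N\sum_x\P_x(T_{\rm ext}>t)$, runs the wait-and-see process from a \emph{single} vertex $x$, stops it at $T=T_{\rm hit}\wedge T_{\rm ext}$ where $T_{\rm hit}$ is the first infection of a vertex $<\lceil aN\rceil$, and gets $\P_x(T_{\rm hit}<T_{\rm ext})\le s_x/s(a)$ by optional stopping — this is exactly where the $\tfrac1{s(a)}\int_a^1 s$ term comes from. Your proposal contains no analogue of $T_{\rm hit}$ or of the single-vertex start, and without them the claimed bound cannot be reached.

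A second, related gap: you only assert that the generator of $F$ is nonpositive, but both the $\omega/t$ term in \eqref{resdens} and the $\E[T_{\rm ext}]\le\omega\log N$ bound for $a=0$ need a quantitative drift. The paper proves $\frac1{dt}\E[M_{t+dt}-M_t\mid\F_t]\le-\rho M_t$ for an explicit $\rho>0$ (this is where the weights $R_t(x),Q_t(x)$ attached to \emph{revealed} edges at both infected and uninfected endpoints, compensated by the update rates $\kappa_{x,y}$, are essential), and then works with $Z_t=\log(1+M_t)+\tfrac\rho2 t$, using $M_t\ge s(1)\ge1$ while the infection is alive. Your ``exponential transform of $F$'' gestures at this, but with merely nonpositive drift no logarithmic extinction bound follows; you would need to redo the drift computation for your pending-edge score and show it is bounded above by a negative multiple of the score itself, and it is not clear your bookkeeping (which attaches no weight to revealed edges at uninfected vertices) allows the update events to absorb the infection terms in the way the paper's $\nu_t(x)=R_t(x)+2Q_t(x)$ does.
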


\begin{remark}
Heuristically, Condition~\eqref{cond1} can be opposed to Condition~\eqref{timescaledef} and interpreted as the request that there is no local survival effect available for vertices in $[aN,N]$. The fact that Condition~\eqref{cond2} then suffices to get upper bounds, is an indication that the process does not spread the infection more than an infection process where each edge $xy$ of the complete graph would transmit the infection at rate $3\lambda (1+\lambda/2\varkappa ^2)p_{x,y}$, namely a multiplicative factor $3(1+\lambda/\varkappa ^2)$ more than the average rate $\lambda p_{x,y}$.
\end{remark}

{We postpone the proof of the theorem to the appendix, and check here how it can be used to deduce Proposition~\ref{supermartingale_upperbound}. In order to use Theorem~\ref{teoupper_edge} we have to determine a level~$a$ and a function $s$ satisfying the hypotheses of the theorem and providing the required upper bounds.} The way we do this is led by two complementary principles. First, a purely analytic approach, when the conditions required by Theorem~\ref{teoupper_edge} lead to optimal or natural choices. Second, the comparison of the approach with the upper bounds. In particular, the contact process should show a subcritical behaviour as long as none of the vertices $1,\ldots, \lfloor aN\rfloor$ is infected, while the stars $\mathscr S=\{1,\ldots, \lfloor a N\rfloor\}$ introduced in the lower bounds 
are typically infected in the metastable state. We thus expect that $a$ in the upper bounds is larger than $a$ in the lower bounds
and aim to make them of the same order. \medskip
\pagebreak[3]

Recalling the definition of the kernels, to avoid cluttered notation, we henceforth assume that $\beta=1$.
To simplify the computations, we also use the notation $f\lesssim g$ if the positive functions $f$ and $g$ of $\lambda$ satisfy that $f/g$ is bounded when $\lambda\to 0$, and similarly $f\gtrsim g$ or $f\asymp g$. We further recall that $\tau\in (2,\infty)$ and $\gamma\in (0,1)$ are linked by the 
relation~$\tau=1+1/\gamma$.
\medskip

\subsection*{Application of Theorem~\ref{teoupper_edge} to the factor kernel.}

We now apply Theorem~\ref{teoupper_edge} to the factor kernel to deduce part~(a) of Proposition~\ref{supermartingale_upperbound}. In these settings, we have 
\[
\psi(x)\le x^{-\gamma+2\gamma \eta} \int_0^1 y^{-\gamma} \de y \le \sfrac {1} {1-\gamma} \,
x^{-\gamma+2\gamma \eta}
\]
and thus in order to satisfy~\eqref{cond1}, it suffices to satisfy
\begin{equation}\label{cond1bis}
\sfrac 6{1-\gamma} \, \lambda^2 \,\left(1 \vee a^{-\gamma+2\gamma \eta}\right) \le 1.
\end{equation}
This is automatically satisfied for small $\lambda$ when $\eta\ge 1/2$, while for $\eta<1/2$, it requires \smash{$a\gtrsim \lambda^{\frac 2{\gamma(1-2\eta)}}.$} For small $\lambda$, the expression $1+\frac \lambda{2\varkappa ^2}$ is bounded by $4/3$, thus~\eqref{cond2} is implied~by
\begin{equation}\label{cond2bis}
\left(4\lambda \int_0^1 p(x,y) s(y\vee a) \de y\right) \le s(x) \quad \forall x\ge a.
\end{equation}
For the factor kernel the left-hand side factorises, and if we consider the natural choice $s(x)=x^{-\gamma}$, then~\eqref{cond2bis} is equivalent to $\Delta_a\le 1$, where 
\smash{$\Delta_a:=4\lambda \int_0^1 y^{-\gamma} (y\vee a)^{-\gamma} \de y.$}
\smallskip

We now consider the different phases separately.
\smallskip

\noindent
{\bf (1) The case $\eta\ge 1/2$ and $\tau>3$.}
In that case \eqref{cond1} is automatically satisfied for small~$\lambda$, and $\Delta_0=\lambda c$ for some finite $c$, so, with $s(x)=x^{-\gamma}$, \eqref{cond2bis} is also satisfied for $a=0$ and $\lambda\le 1/c$. Thus, for small values of $\lambda>0$, the expectation $\E[T_{ext}]$ is logarithmic in $N$ and there is fast extinction.
\medskip

\noindent
{\bf (2) The case $\eta\ge 1/2$ and $\tau=3$.}
Again, \eqref{cond1} is satisfied for small $\lambda$. Considering $s(x)=x^{-\gamma}$, we now have \smash{$\Delta_a\sim \lambda \log\frac 1 a$}. It follows that~\eqref{cond2bis} is satisfied with \smash{$a(\lambda)=e^{-r \lambda^{-1}}$} with some well-chosen $r>0$, and we deduce \smash{$\rho^+(\lambda)\le e^{-r' \lambda^{-1}}$} for some $r'>0$.
\medskip

\noindent
{\bf (3) The case $\tau<3$ and $\eta\ge \tau-\frac 52$.}
Choosing again $s(x)=x^{-\gamma}$, we have $\Delta_a\equiv \lambda a^{1-2\gamma}$, so~\eqref{cond2bis} is satisfied if we take \smash{$a(\lambda)=r a^{\frac 1 {2\gamma-1}}$} for some well-chosen $r>0$. In that phase this makes~\eqref{cond1} automatically satisfied as well for small $\lambda$. We deduce
\smash{$\rho^+(\lambda)\lesssim \frac 1 {s(a(\lambda))}=\lambda^{\frac 1 {3-\tau}}.$}
\medskip

\pagebreak[3]

\noindent
{\bf (4) The case $\eta<\frac 1 2$ and $\tau> \frac 5 2+\eta$.}
In this case the most restrictive constraint for the choice of $a(\lambda)$ comes from Inequality~\eqref{cond1}, which leads 
naturally 
to the choice
\smash{$a(\lambda) = r \lambda^{_{\frac 2 {\gamma(1-2\eta)}}}$,}
for some large enough constant $r>0$. 
Inequality~\eqref{cond2bis} is then automatically satisfied (for small $\lambda$) if we choose $s(x)= x^{-\gamma}$, but other choices of scoring functions are still allowed and can give better upper bounds. 
We choose $s(x)=x^{-\mathfrak{c}}$ where $\mathfrak{c}=1-\gamma/2-\gamma\eta$. Observe that, for this choice
$$\gamma<\frac{2}{3+2\eta}\Longrightarrow\gamma<\mathfrak{c},\quad\text{ and }\quad \eta<\frac{1}{2}\Longrightarrow\gamma+\mathfrak{c}>1$$
so it suffices to check \eqref{cond2bis} for $x=1$, but $4\lambda \int_{0}^1y^{-\gamma}s(y\vee a)\, \de y\;\asymp\; \lambda a^{1-\gamma-\mathfrak c} \asymp 1,$
and one can readily check that the left-hand side is indeed bounded by 1 if $r$ is chosen large enough. We then obtain as an upper bound for the upper metastable density
\[
\rho^+(\lambda)\lesssim \sfrac 1 {s(a(\lambda))}=a^\mathfrak{c}=r^{\mathfrak c} \lambda^{\frac {2-\gamma-2\gamma\eta} {\gamma(1-2\eta)}}= r^{\mathfrak c} \lambda^{\frac{2\tau-3-2\eta}{1-2\eta}}.
\]

\subsection*{Application of Theorem~\ref{teoupper_edge} to the strong or preferential attachment kernel.}

Similarly to what was done with the factor kernel, we choose the function $s$ depending on the parameters $\gamma$ and $\eta$. In this case, however, our choice of $s$ needs to be a little more subtle:%
\medskip

\noindent
{\bf (1)\ The case $\eta\ge \frac 12$ and $\tau>3$.}
As with the factor kernel, \eqref{cond1} is satisfied for small~$\lambda$ with $a=0$. We choose the scoring function \smash{$s(x)=x^{-\gamma'}$} for some $\gamma'\in(\gamma,1-\gamma)$ whence~\eqref{cond2bis} and~\eqref{cond2} are satisfied for small $\lambda$.
Using the first part of the theorem we deduce $\E[T_{ext}]\lesssim \log N$, and fast extinction.
\smallskip

\noindent
{\bf (2)\ The case $\tau<3$ and $\eta>\frac \tau 2 - 1$.}
This phase corresponds to the quick indirect spreading phase in the lower bounds. Looking at the definition of stars in that case, it is reasonable to fix \smash{$a= r \lambda^{\frac{2}{2\gamma-1}}$} for some (large) fixed $r$. Then we can readily check that~\eqref{cond1} is satisfied for small $\lambda$. We further choose the scoring function $s$ differently as
\smash{$s(x)\:=\;x^{\gamma-1}+\sfrac{5\lambda}{2\gamma-1} x^{-\gamma}$}, 
which is larger than $1$ and decreasing, because in this case we necessarily have $\gamma>\frac{1}{2}$. To argue why this may be reasonable scoring function, one may think that the first term corresponds to a direct infection of vertex $x$ to a stronger vertex $y<x$, and the second to an indirect infection, that passes through some vertex $z>x$ first.
We now check \eqref{cond2} and, for simplicity, use 
$p(x,y)\; \leq \;
x^{-\gamma}y^{\gamma-1}+x^{\gamma-1}y^{-\gamma}$
so that
\begin{align*}
4\lambda \int_0^1 & \, p(x,y)  s(y)\, dy\\
& \leq\;4\lambda x^{\gamma-1}\int_a^1\left(y^{-1}+\sfrac{5\lambda}{2\gamma-1}y^{-2\gamma}\right)dy\;+\;4\lambda x^{-\gamma}\int_a^1\left(y^{2\gamma-2}+\sfrac{5\lambda}{2\gamma-1}y^{-1}\right)\, dy
\end{align*}
\begin{align*}
\phantom{4\lambda \int_0^1} &\leq\;\sfrac{50\lambda^2a^{1-2\gamma}}{(2\gamma-1)^2}x^{\gamma-1}\;+\;\sfrac{5\lambda}{2\gamma-1}x^{-\gamma} \;=\;\sfrac{50r^{1-2\gamma}}{(2\gamma-1)^2}x^{\gamma-1}\;+\;\sfrac{5\lambda}{2\gamma-1}x^{-\gamma},
\end{align*}
where the last inequality is actually only valid for small $\lambda$, using that $\lambda\log(1/a)$ can then be taken as small as wanted. Taking $r$ large (depending on $\gamma$ alone) we have that the latter expression is smaller than $s(x)$, giving that our choice of $s$ satisfies \eqref{cond2}. Applying the second part of the theorem we finally obtain the upper bound,
\[
\rho^+(\lambda) \le a +\frac 1 {S(a)} \int_{a}^1S(y) \de y \lesssim \lambda^{\frac{\tau-1}{3-\tau}}.
\]
\noindent
{\bf (3)\ The case $\eta<\frac12$ and $\tau> 2+2\eta$. } This case is similar to the case of the factor kernel. As before let \smash{$a(\lambda) = r \lambda^{\frac 2 {\gamma(1-2\eta)}}$},
choosing $r$ large enough so that~\eqref{cond1} is satisfied. We further define the scoring function as 
the monomial $s(x)=x^{-\mathfrak{c}}$ where $\mathfrak{c}=\frac{2-\gamma-2\gamma\eta}{2}$ as before. Observe that in this case we have
\begin{equation}
\label{eq11}4\lambda\int_{a}^1p(x,y)S(y)dy\; \leq \;4\lambda x^{\gamma-1}\int_{a}^xy^{-\gamma-\mathfrak{c}}dy\;+\;4\lambda x^{-\gamma}\int_x^1 y^{\gamma-\mathfrak{c}-1}dy
\end{equation}
where $\eta<\frac{1}{2}$ shows that $1-\gamma-\mathfrak{c}=\frac{\gamma}{2}(2\eta-1)<0$ so that the first integral is bounded by
$$\tfrac{4\lambda}{\gamma+\mathfrak{c}-1}a^{1-\gamma-\mathfrak{c}}x^{\gamma-1}\;=\;\tfrac{4r^{1-\gamma-\mathfrak{c}}}{\gamma+\mathfrak{c}-1}x^{\gamma-1}\;\leq\;\tfrac12x^{-\mathfrak{c}}$$

\noindent as soon as $r$ is taken large enough. For the second integral, it is no longer true that $\gamma-\mathfrak{c}<0$ always as with the factor kernel, so we need to consider three possibilities:\smallskip
\begin{itemize}
	\item $\mathbf{ \gamma-\mathfrak{c}<0:}$ The integral is bounded by $\frac{4\lambda}{\mathfrak{c}-\gamma}x^{-\mathfrak{c}}\leq\frac12x^{-\mathfrak{c}}$ if $r$ is large enough.
	\smallskip
	
	\item $\mathbf{ \gamma-\mathfrak{c}=0:}$ The integral is
	$-4\lambda x^{-\gamma}\ln(x)\leq -4\lambda \ln(a)x^{-\mathfrak{c}}\leq\frac12x^{-\mathfrak{c}}$
	when $\lambda$ is small.\smallskip
	
	\item $\mathbf{ \gamma-\mathfrak{c}>0:}$ The integral is bounded by $\frac{4\lambda}{\gamma-\mathfrak{c}}x^{-\gamma}$. We have $\lambda x^{-\gamma}\leq r^{\mathfrak{c}-\gamma}x^{-\mathfrak{c}}$ by choice of \smash{$a(\lambda)=r \lambda^{\frac{2}{\gamma(1-2\eta)}}$.} Hence, if $r$ is large, 
	$$4\lambda x^{-\gamma}\int_x^1 y^{\gamma-\mathfrak{c}-1}dy\;\leq\;\tfrac{4\lambda}{\gamma-\mathfrak{c}}x^{-\gamma}\;\leq\;\tfrac{4r^{\mathfrak{c}-\gamma}}{\gamma-\mathfrak{c}}x^{-\mathfrak{c}}\;\leq\;\tfrac{1}{2}x^{-\mathfrak{c}}.$$
\end{itemize}
Using the bounds for both integrals in \eqref{eq11} we conclude that $s(x)=x^{-\mathfrak{c}}$ satisfies Condition~\eqref{cond2}. Applying the second part of the theorem, we obtain the bound
$$ \rho^+(\lambda) \le a +\tfrac 1 {S(a)} \int_{a}^1S(y) dy\;\leq\;\tfrac{2r^\mathfrak{c}}{1-\mathfrak{c}}\lambda^{\frac{2-\gamma-2\gamma\eta}{\gamma(1-2\eta)}}= \tfrac{2r^\mathfrak{c}}{1-\mathfrak{c}}\lambda^{\frac{2\tau-3-2\eta}{1-2\eta}}$$
where the computations are the same as in the case of the factor kernel. 

\subsection*{Application of Theorem~\ref{teoupper_edge} to the weak kernel.} {We choose $s(x)=x^{-1}$ and note that~\eqref{cond1} holds automatically. Then~\eqref{cond2} can be verified with
\smash{$a(\lambda)\geq c (\lambda \log(1/\lambda))^\frac1\gamma$} and this implies the given upper bound for $\rho^+(\lambda)$.}
\pagebreak[3]

\section{Appendix: Proofs of Theorem~\ref{teoupper_edge}}

As in \cite{JLM19}, we work with the \emph{wait-and-see process}~$(Y_t)$, which can take values 0 or 1 on the vertices $x\in V$ but also on the edges $\{x,y\}$ of the complete graph, with the understanding $Y_t(x)=1$ if $x$ is \emph{infected} at time $t$, and $Y_t(x,y)=1$ if the edge $\{x,y\}$ is \emph{revealed} at time $t$. The process evolves as follows:
\begin{itemize}
\item Updates occur as in the original network and have the effect of turning the status of the potential edges updating to unrevealed.
\item Each revealed edge incident to an infected vertex  and an uninfected vertex, transmits the infection to the uninfected vertex at rate $\lambda$.
\item Each unrevealed edge $\{x,y\}$ incident to at least one infected vertex, gets revealed at rate $\lambda p(x,y)$. Simultaneously, it transmits the infection to its potentially uninfected incident vertex.
\item Vertices recover at rate 1.
\end{itemize}
The wait-and-see process can be coupled with the original process, with initially all edges unrevealed and the same set of infected vertices, so that at any time $t$ an infected vertex for $X_t$ is also infected for $Y_t$, and each revealed edge in $Y_t$ indeed belongs to the edge set of~$\mathscr G_t$. This coupling was provided in \cite{JLM19} for the vertex updating model, and the adaptation to the edge updating model is straightforward. As a consequence, it suffices to prove Theorem~\ref{teoupper_edge}
for the vertex component of $Y$, when initially all edges are unrevealed.%
\bigskip%

We continue with the proof of Theorem~\ref{teoupper_edge} and  suppose $a$, $s$ are given satisfying the hypotheses of this theorem. If $a$ is nonzero, then we extend $s$ to $[0,1]$ by defining $s(x)=s(a)$ for $x\le a$. To each vertex $x\in\{1,\ldots, N\}$, we associate $s_x:=s(x/N)$, which serves as a base score if $x$ is infected but not surrounded by any revealed edge. We further introduce
\[
Q_t(x):=\sum_{y: Y_t(x,y)=1}\frac \lambda {\kappa_{x,y}^2}, \qquad R_t(x):=\sum_{y: Y_t(x,y)=1}\frac \lambda {\kappa_{x,y}}.
\]
As we work here with $\eta\ge 0$, the updating rates are lower bounded by $2\varkappa $, and in particular we have
\begin{equation}\label{RboundingQ}
R_t(x)\ge 2\varkappa  Q_t(x).
\end{equation}
We now define the score of the configuration as $M_t:=\sum_{x=1}^N s_x \nu_t(x)$ with
\[
\nu_t(x):=\left\{
\begin{array}{ll}
1+2 Q_t(x) &\mbox{if } Y_t(x)=1,\\
R_t(x)+2 Q_t(x) &\mbox{if } Y_t(x)=0.
\end{array}
\right.
\]
We show that it is a supermartingale up to the extinction time $T_{ext}$, or up to the hitting time $T_{hit}$ of a vertex $x\in\{1,\ldots, \lceil aN \rceil -1\}$. More precisely, considering
\[
t<T_{hit}:=\inf\{s\ge 0 \colon Y_s(x) = 1 \text{ for some }x<\lceil aN\rceil\},
\] we show
$\frac 1 {dt}  \E[M_{t+dt} -M_t\vert \F_t] \le - \rho M_t,$
for some $\rho>0$. We choose $t<T_{hit}$ and observe that in that case we have $Y_t(x)=0$ for all $x<aN$. Now we provide upper bounds on the infinitesimal change of $\nu_t(x)$ for $x\in\{1,\ldots, N\}$, depending on the value of $Y_t(x)$.\\

$\mathbf{ (i)\ Y_t(x)=1 \mbox{ and } x>aN}.$ In this case $\nu_t(x)$ may change due to
\begin{itemize}
	\item {\bf a recovery;} recoveries contribute to the infinitesimal change adding an expression of the form $R_t(x)-1$
	\item {\bf infections;} since $Y_t(x)=1$, the vertex is able to infect vertices whose edges are not revealed. This occurs at rate $\lambda p_{x,y}$ so it contributes to the infinitesimal change the expression
	\[\sum_{y: Y_t(x,y)=0} \lambda p_{x,y} \sfrac{2\lambda}{\kappa_{x,y}^2}\leq 2\lambda^2 \psi(x)\le \sfrac 1 3,
	\]
	where the last inequality follows from~\eqref{cond1} and $x>aN$.
	\item {\bf updates;} finally, edges between $x$ and its revealed neighbours can update at 
rate~$\kappa_{x,y}$ giving an expression of the form
	\[\sum_{y: Y_t(x,y)=1} \kappa_{x,y} \sfrac{-2\lambda}{\kappa_{x,y}^2} = -2 R_t(x).\]
\end{itemize}

When adding all of these terms we finally obtain, using~\eqref{RboundingQ},
\[
\frac 1 {dt} \E[\nu_{t+dt}(x)-\nu_t(x) \vert \F_t]\le 
-\frac 2 3 - R_t(x)\le -\frac 2 3 - 2 \varkappa  Q_t(x).
\]
\smallskip

$\mathbf{(ii)\ Y_t(x)=0}$ In this case the score of $x$ may change due to
\begin{itemize}
	\item {\bf infections coming from a revealed neighbour;} these induce the expression
	\[
	\lambda N_t(x) (1-R_t(x))  \le \lambda N_t(x),
	\]
	where $N_t$ denotes the number of revealed infected neighbours.
	\item {\bf infections coming from unrevealed infected neighbours;} similarly as above, each unrevealed infected neighbour infects $x$ with rate $\lambda p_{x,y}$ so in particular the expression added to the infinitesimal change is 
	\[\sum_{\substack{y:Y_t(x,y)=0\\Y_t(y)=1}}\lambda p_{x,y} \left(1- R_t(x)+\sfrac {2\lambda}{\kappa_{x,y}^2}\right) \le \sum_{\substack{y:Y_t(x,y)=0\\Y_t(y)=1}}\lambda p_{x,y} \left(1+\sfrac {\lambda}{2\varkappa ^2}\right).
	\]
	
	\item {\bf updates;} as before, edges between $x$ and its revealed neighbours can update at rate $\kappa_{x,y}$, adding the main term used to compensate the infections
	\[\sum_{y: Y_t(x,y)=1} -\kappa_{x,y}\left( \sfrac {2\lambda}{\kappa_{x,y}^2}+\sfrac \lambda {\kappa_{x,y}}\right)= -2R_t(x)-\lambda N_t(x).
	\]
\end{itemize}
When adding all of these terms we obtain
\[
\frac 1 {dt} \E[\nu_{t+dt}(x)-\nu_t(x) \vert \F_t]\le 
- 2 R_t(x)+\sum_{\substack{y:Y_t(x,y)=0\\Y_t(y)=1}}\lambda p_{x,y} \left(1+\tfrac {\lambda}{2\varkappa ^2}\right).
\]
\pagebreak[3]
We can consider the whole score, and use the hypothesis $Y_t(x)=0$ for $x<aN$ to write
\begin{align*}
\frac 1 {dt} &  \E[M_{t+dt} -M_t\vert \F_t]  = \sum_x  s_x \frac 1 {dt} \E[\nu_{t+dt}(x)-\nu_t(x) \vert \F_t] \\
& \le - \sum_{x\colon Y_t(x)=1} \left(\sfrac 2 3+ 2\varkappa  Q_t(x)\right) s_x - 2\!\!\!\sum_{x\colon Y_t(x)=0} R_t(x) s_x 
+ \!\!\!\!\sum_{\heap{x,y\colon Y_t(x)=0,}{ Y_t(y)=1}}   \lambda \left(1+\sfrac {\lambda}{2\varkappa ^2}\right) p_{x,y} s_x
\end{align*}
For the last term, we can reverse the role of $x$ and $y$ and obtain
\[
\sum_{x\colon Y_t(x)=1} \sum_{y\colon Y_t(y)=0}   \lambda \left(1+\sfrac {\lambda}{2\varkappa ^2}\right) p_{x,y} s_y\le \sum_{x\colon Y_t(x)=1} \sfrac {s_x} 3,
\]
where we have used a simple sum-integral comparison (recall $p$ and $s$ are nonincreasing in their parameters) and~\eqref{cond2}. Recalling~\eqref{RboundingQ}, we get, for
 $\rho= \min(\frac 13, \varkappa ,\frac {2\varkappa }{\varkappa +1})>0$, that
\[
\frac 1 {dt}  \E[M_{t+dt} -M_t\vert \F_t] \le
- \sum_{x\colon Y_t(x)=1} \left(\sfrac 1 3+ 2\varkappa  Q_t(x)\right) s_x - 2\sum_{x\colon Y_t(x)=0} R_t(x) s_x \le -\rho M_t.
\]
As a first consequence, defining $T=T_{hit}\wedge T_{ext}$, we obtain that $(M_{t\wedge T})_{t\ge 0}$ is a supermartingale. Furthermore, defining $Z_t=\log(1+M_t)+\frac \rho 2 t$, we obtain by Jensen's inequality and the concavity of the logarithmic function on $[1,\infty)$ that, on the event $\{t<T\}$,
\[
\frac 1 {dt}  \E[Z_{t+dt} -Z_t\vert \F_t] \le \rho \left(\sfrac 1 2 - \sfrac {M_t}{1+M_t}\right).
\]
Observing that the score of a configuration with at least one vertex infected is at least $s(1)\ge 1$, we get that the expression above is nonpositive, and thus $(Z_{t\wedge T})_{t\ge 0}$ is a supermartingale.%
\smallskip%

In the case $a=0$, we deduce the first part of Theorem~\ref{teoupper_edge} by observing that in that case the nonnegative supermartingale $(Z_{t\wedge T})$ converges almost surely to $Z_{T_{ext}}=\frac \rho 2 T_{ext}$, so applying the optional stopping theorem gives
\[
\E[T_{ext}]\le \frac 2 \rho Z_0\le \frac 2 \rho \log\bigg(1+ \sum_{x=1}^N s_x\bigg) \le \frac 2 \rho \log\Big(1+ N \int s(x)\de x\Big)= O(\log N),
\]
as needed. Note that $s$ is integrable by on $(0,1]$ by~\eqref{cond2}.
\smallskip

In the case $a>0$, we use the duality of the process and the coupling to deduce
\[
I_N(t)\le \frac1 N \sum_{x=1}^N \P_x(t<T_{ext})\le \frac {\lceil a N\rceil} N+\frac 1 N \sum_{x=\lceil a N\rceil+1}^N \big(\P_x(T_{hit}<T_{ext})+\P_x(t<T)\big),
\]
where $\P_x$ corresponds to the law of the process with initial condition $Y_0=\delta_0$. Using that $(M_{t\wedge T})$ is a supermartingale, we can bound the first term in the sum by
\[
\P_x(T_{hit}<T_{ext})\le \frac {s_x}{s(a)}.
\]
Using the supermartingale $(Z_{t\wedge T})$ and Markov's inequality for the second term we get
\[
\P_x(t<T)\le \frac 1 t \E_x[T]\le \frac 2 {\rho t} \E_x[Z_T] \le \frac 2 {\rho t} \E_x[Z_0]\le \frac 2 {\rho t} \log(1+s_x).
\]
Adding over $x$ and by a simple sum-integral comparison, we get:
\[
I_N(t)\le a+\frac 1 N+ \frac 1{s(a)} \int_a^1 s(y)\de y + \frac 2 {\rho t} \int_a^1 \log(1+s(y))\de y.
\]
Thus~\eqref{resdens} is proved with $\omega=\frac 2 \rho \int_a^1\log(1+s)\, ds$.

\bigskip

\noindent {\bf Acknowledgements:} AL was partially supported by the CONICYT-PCHA/Doctorado nacional/2014-21141160 scholarship, grant GrHyDy ANR-20-CE40-0002, and by the 
\linebreak FONDECYT grant 11221346.


\bigskip


{\scriptsize\renewcommand{\baselinestretch}{0.5}
\noindent
{\bf Emmanuel Jacob}, Ecole Normale Sup\'erieure de Lyon, Unit\'e de Math\'ematiques Pures et Appliqu\'ees,  \linebreak UMR CNRS 5669 , 46, All\'ee d'Italie, 69364 Lyon Cedex 07, France.
\medskip

\noindent
{\bf Amitai Linker}, Departamento de Matem\'aticas, Facultad de Ciencias Exactas, Universidad Andr\'es Bello, \linebreak Sazi\'e 2212, Santiago, Chile.
\medskip

\noindent
{\bf Peter M\"orters}, Universit\"at zu K\"oln,  Mathematisches Institut, Weyertal 86--90, 50931~K\"oln, Germany.\par}
\end{document}